\crefname{equation}{}{}
\newtheorem{theorem}{Theorem}
\newtheorem{lemma}[theorem]{Lemma}
\newtheorem*{lemma*}{Lemma}
\newtheorem{corollary}[theorem]{Corollary}
\newtheorem*{corollary*}{Corollary}
\newtheorem*{theorem*}{Theorem}
\newtheorem{conj}[theorem]{Conjecture}
\newtheorem{nconj}[theorem]{Na{\"i}ve Conjecture}
\newtheorem{prop}[theorem]{Proposition}
\theoremstyle{definition}
\newtheorem{example}[theorem]{Example}
\newtheorem{definition}[theorem]{Definition}
\theoremstyle{remark}
\newtheorem*{remark}{Remark}
\newcommand\R{\mathbb{R}}\newcommand\RR{\mathbb{R}}
\newcommand\E{\mathbb{E}}
\newcommand\EE{\mathbb{E}}
\newcommand\PP{\mathbb{P}}
\newcommand{\paren}[1]{\left( #1 \right)}
\newcommand{\set}[1]{\left\{ #1 \right\}}
\newcommand{\wt}{\widetilde}
\newcommand{\eps}{\varepsilon}
\newcommand{\GW}{\textbf{\textsc{GW}}}
\newcommand{\G}{\mathcal{G}}
\newcommand{\Vol}{\mathrm{Vol}}
\newcommand{\DiscLip}{\mathrm{DiscLip}}
\newcommand{\DiscDerv}{\mathrm{DiscDerv}}
\newcommand{\DiscGrad}{\mathrm{DiscGrad}}
\newcommand{\DiscGW}{\mathrm{DiscGW}}
\renewcommand{\d}{\delta}
\newcommand{\bs}{\backslash}
\newcommand{\D}{\Delta}
\newcommand{\ls}{\lesssim}
\newcommand{\gs}{\gtrsim}
\renewcommand{\l}{\langle}
\renewcommand{\r}{\rangle}
\renewcommand{\forall}{\text{ for all }}
\newcommand{\nbone}{\overline{B_1^{(i)}}}
\newcommand{\xr}{x_{[r]}}
\newcommand{\oB}{\overline{B}}
\renewcommand{\S}{\mathcal{S}}
\newcommand{\dx}{\mathrm d x}
\newcommand{\dy}{\mathrm d y}
\newcommand{\dz}{\mathrm d z}
\newcommand{\ka}{\kappa}
\newcommand{\h}{\H}
\numberwithin{theorem}{section}
\author[Liu]{Yang P. Liu}
\address{Liu: Department of Mathematics, Stanford University, Stanford, CA, USA}
\email{yangpliu@stanford.edu}
\author[Zhao]{Yufei Zhao}
\address{Zhao: Department of Mathematics, Massachusetts Institute of Technology, Cambridge, MA, USA}
\email{yufeiz@mit.edu}
\thanks{Liu was supported by the Department of Defense (DoD) through the National Defense Science and Engineering Graduate Fellowship (NDSEG) Program. Zhao was supported by NSF Award DMS-1764176, the MIT Solomon Buchsbaum Fund, and a Sloan Research Fellowship.}
\begin{document}

\title[Upper tail for hypergraphs]{On the upper tail problem for random hypergraphs}
\begin{abstract}
The upper tail problem in a random graph asks to estimate the probability that the number of copies of some fixed subgraph in an Erd\H{o}s--R\'enyi random graph exceeds its expectation by some constant factor.
There has been much exciting recent progress on this problem. 

We study the corresponding problem for hypergraphs, for which less is known about the large deviation rate. We present new phenomena in upper tail large deviations for sparse random hypergraphs that are not seen in random graphs. We conjecture a formula for the large deviation rate, i.e., the first order asymptotics of the log-probability that the number of copies of fixed subgraph $H$ in a sparse Erd\H{o}s--R\'enyi random $k$-uniform hypergraph exceeds its expectation by a constant factor. This conjecture turns out to be significantly more intricate compared to the case for graphs. We verify our conjecture when the fixed subgraph $H$ being counted is a clique, as well as when $H$ is the 3-uniform 6-vertex 4-edge hypergraph consisting of alternating faces of an octahedron, where new techniques are required.
\end{abstract}

\maketitle

\section{Introduction}

\subsection{The upper tail problem in random graphs}

Given a fixed graph $H$, the ``infamous upper tail'' problem, a name given by Janson and Rucinski \cite{JR02}, asks to estimate the probability that the number of copies of $H$ in an Erd\H{o}s--R\'enyi random graph exceeds its mean by some given constant factor. This problem has played a central role in the development of probabilistic combinatorics, and had led to the development of a host of useful concentration inequalities. There were a number of significant advances on this problem in the past decade or so. We begin by summarizing some of the recent developments. 

Let $X_H$ denote the number of copies of $H$ in an Erd\H{o}s--R\'enyi random graph $G_{n,p}$. A problem of great interest is to estimate the probability that $X_H \ge (1+\delta) \E X_H$, where $\delta > 0$ is fixed but $p = p(n)$ is allowed to vary with $n$.

Even the order of the log-probability had resisted much attack, until it was determined independently by Chatterjee \cite{Cha12} and DeMarco and Kahn \cite{DK12}. Once the order of log-probability had been determined, the attention turns to pinning down the leading constant, i.e., the first order asymptotics of the log-probability of upper tails. 

As is commonly the case with large deviation problems, there are two complementary steps:
\begin{enumerate}
\item[(1)] Developing a large deviation principle/framework that reduces the rate problem to a natural variational problem over edge-weighted graphs or graphons;
\item[(2)] Solving the variational problem.
\end{enumerate}
For random graphs, neither step is easy. There were a number of recent breakthroughs that have lead to a satisfying understanding in many interesting cases, though there is still much mystery in general as well as in natural variations of the problem.

For clarity, let us focus on the case $H = K_3$. Progress towards Step (1), the development of the large deviation principle, began with the seminal paper of Chatterjee and Varadhan \cite{CV11}, which proves a large deviation principle for dense random graphs using Szemer\'edi's graph regularity lemma. Due to the poor quantitative dependencies in the graph regularity lemma, Chatterjee and Varadhan's method is applicable to random graphs $G_{n,p}$ with constant $p$ (i.e., dense graphs) or extremely slowly decreasing $p$, e.g., $p \ge (\log n)^{-c}$ (see \cite[Section 5]{LZ17} for an explanation how to apply the weak regularity lemma to derive this result). 
Subsequently, Chatterjee and Dembo \cite{CD16}, using ideas from Stein's method for exchangeable pairs, derived the first nonlinear large deviation principle that allows $p$ to decay as a power of $n$. For triangles, their theorem holds when $p \ge n^{-1/42}\log n$. Eldan later used a different method, namely, using stochastic differential equations to analyze a certain modified Brownian motion, improved the range of validity to $p \ge n^{-1/18}\log n$ in the case of triangles. Independent results by Cook--Dembo \cite{CD} and Augeri \cite{Aug} further improved the range of validity for $H = K_3$ to $p \gg n^{-1/3}$ and $p \gg n^{-1/2}$ respectively. Very recently, Harel, Mousset, and Samotij \cite{HMS}, using a novel combinatorial approach, resolved the problem for all ranges of $p$ in the case of triangle upper tails, though for general $H$ there remain gaps to be closed. A separate solution to the lower tail problem has been announced in a forthcoming work of Kozma and Samotij.

For Step (2), in the setting of dense random graphs ($p$ constant), one can ask whether the variational problem is optimized by a constant graphon (we say that ``replica symmetry" occurs when this happens), and this question was answered by Lubetzky and Zhao \cite{LZ15} for every regular graph $H$.  From now on, let us consider the sparse setting $p \to 0$. The variational problem was solved by Lubetzky and Zhao \cite{LZ17} in the case of $H = K_3$, and more generally, when $H$ is a clique. For general graphs $H$, although the case $H = C_4$ already presented a significant hurdle, the variational problem was solved for all $H$ by Bhattacharya, Ganguly, Lubetzky, and Zhao \cite{BGLZ17}. In contrast, the lower tail variational problem, studied in \cite{Zhao17}, has a completely different behavior, for which some basic questions are still open. Recently, the corresponding problem for random regular graphs was also studied~\cite{BD}.

Combining these developments, in particular~\cite{HMS,LZ17}, our knowledge of the upper tail rate for cliques $H=K_r$ is summarized below. See Section \ref{sec:notation} for asymptotic notation.
\begin{theorem} \label{thm:upper-tail-graphs-clique}
	Fix integer $k \ge 3$ and real $\delta > 0$. Let $X = X_{K_k}$ be the number of $k$-cliques in the random graph $G_{n,p}$, where $p = p(n)$. Then
	\[
		\lim_{n \to \infty} \frac{-\log \PP (X \ge (1+\delta) \EE X)}{n^2 p^{k-1} \log(1/p)} 
		= \begin{cases}
			\frac12\delta^{2/k} & \text{if } n^{-2/(r-1)} (\log n)^{\frac{2}{(r-2)(r-1)}} \ll p \ll n^{-1/(k-1)}, \\
			\min\{ \frac12\delta^{2/k}, \frac{\d}{k} \}
			& \text{if } n^{-1/(k-1)} \ll p \ll 1.
		\end{cases}
	\]
\end{theorem}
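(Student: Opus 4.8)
The plan is to prove matching upper and lower bounds on the log-probability. Write $\lambda = \EE X = \binom{n}{k}p^{\binom k2}$ and $N = n^2 p^{k-1}\log(1/p)$, and let $c(\delta)$ denote the constant claimed by the theorem: $\tfrac12\delta^{2/k}$ in the first range and $\min\{\tfrac12\delta^{2/k},\tfrac{\delta}{k}\}$ in the second. I must establish (i) $-\log\PP(X\ge(1+\delta)\lambda) \le (c(\delta)+o(1))N$, by an explicit ``planting'' construction, and (ii) $-\log\PP(X\ge(1+\delta)\lambda)\ge(c(\delta)-o(1))N$, via the large-deviation machinery for sparse random graphs. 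The substance is entirely in (ii).

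For (i), I would condition $G_{n,p}$ on one of two cheap events and keep whichever is cheaper; fix a small $\eps>0$. \emph{Clique planting.} Fix an $m$-subset $S$ with $m = \lceil (1+\eps)\delta^{1/k}\,np^{(k-1)/2}\rceil$ and condition on all $\binom m2$ edges inside $S$ being present; this event has probability $p^{\binom m2} = \exp\!\big(-\big(\tfrac12(1+\eps)^2+o(1)\big)\delta^{2/k}N\big)$, and since $m\to\infty$ in the stated range it deterministically creates $\binom mk = (1+o(1))(1+\eps)^k\delta\lambda$ cliques inside $S$. Conditioning on an increasing event can only raise, in the stochastic order, any increasing function of the edge set, so the number of $k$-cliques meeting $V\setminus S$ stochastically dominates its unconditional value, which has mean $(1-o(1))\lambda$ and concentrates; hence whp $X \ge \big((1+\eps)^k\delta + 1 - o(1)\big)\lambda > (1+\delta)\lambda$. \emph{Hub planting} (available only when $p\gg n^{-1/(k-1)}$). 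Fix a $t$-subset $T$ with $t = \lceil (1+\eps)\tfrac{\delta}{k}np^{k-1}\rceil$, so $t\to\infty$ and $t=o(n)$, and condition on every vertex of $T$ being adjacent to every other vertex; this has probability $p^{t(n-t)+\binom t2} = \exp\!\big(-(1+\eps)(1+o(1))\tfrac{\delta}{k}N\big)$, and for each $v\in T$ and each $(k-1)$-clique $C$ inside $V\setminus T$ the set $\{v\}\cup C$ is a $k$-clique, which produces $(1-o(1))t\binom{n}{k-1}p^{\binom{k-1}2} = (1-o(1))(1+\eps)\delta\lambda$ cliques disjoint from the $(1-o(1))\lambda$ lying entirely inside $V\setminus T$, so again whp $X>(1+\delta)\lambda$. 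Taking the cheaper construction and letting $\eps\to0$ gives (i). The concentration claims are routine second-moment computations; they hold because the hypothesis on $p$ forces $\lambda = \EE X_{K_k}\to\infty$, and hence $\EE X_{K_j}\to\infty$ for every $2\le j\le k$ (and likewise $\EE X_{K_{k-1}}\to\infty$, using only $p\gg n^{-1/(k-1)}$, for the hub construction).

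For (ii), I would invoke the sparse upper-tail large-deviation framework of Harel, Mousset, and Samotij~\cite{HMS}. In the stated range of $p$ — whose lower endpoint is precisely the reach of their method for cliques — it shows that $-\log\PP(X\ge(1+\delta)\lambda)$ is, up to a factor $1+o(1)$, the optimum of a combinatorial problem: over subgraphs $F$ of $K_n$ such that forcing the edges of $F$ to be present makes the expected number of $k$-cliques at least $(1+\delta)\lambda$, minimize $e(F)\log(1/p)$ (strictly, an entropic relaxation whose optimizers turn out to be of this $0/1$ form). For cliques this variational problem was solved by Lubetzky and Zhao~\cite{LZ17}: its value is $(c(\delta)+o(1))N$, with the clique- and hub-planting seeds above asymptotically optimal. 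Crucially, when $p\ll n^{-1/(k-1)}$ the hub seed is inadmissible — one cannot force a fractional hub vertex, and a single hub vertex already costs $\asymp n\log(1/p)\gg N$ — so only $\tfrac12\delta^{2/k}$ survives, which is the first case of the formula. Combining (i) and (ii) proves the theorem.

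The main obstacle is step (ii) in its entirety. The reduction of~\cite{HMS} rests on the high-moment method together with an ``entropic stability'' analysis forcing near-maximizers of the relevant moment generating function to lie close to a seed of the above type; pushing this down to $p \asymp n^{-2/(k-1)}(\log n)^{2/((k-2)(k-1))}$ is the delicate part and is the origin of the $(\log n)$ factor in the hypothesis. The companion variational analysis of~\cite{LZ17} — showing that no seed beats the two constructions, and pinning the transition to exactly $p = n^{-1/(k-1)}$ — is itself substantial, the crux being to rule out ``mixed'' or fractional-weight seeds. Step (i), by contrast, is routine once the second-moment estimates are in place.
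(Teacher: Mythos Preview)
Your proposal is correct and matches the paper's treatment: the paper does not prove this theorem itself but states it as a summary of known results, explicitly attributing it to the combination of \cite{HMS} (for the large-deviation reduction to the variational problem, down to the stated lower threshold on $p$) and \cite{LZ17} (for the solution of the variational problem for cliques), with the clique and hub plantings supplying the matching lower bounds on the probability---exactly the decomposition into (i) and (ii) that you describe.
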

The lower bounds to the upper tail probability come from constructions where we plant either a clique on $\delta^{1/k} n p^{(k-1)/2} $ vertices or a hub on $\delta n p^{k-1}/k$ vertices (a \emph{hub} is a set of vertices each adjacent to all vertices of the graph). As shown recently in \cite{HMS}, these two constructions approximately describe the typical structure of the random graph conditioned on the upper tail event. We refer the readers to \cite{HMS} for precise descriptions of these results as well as much more general statements covering other settings.

\subsection{Random hypergraphs}

The aim of this paper is to initiate the study of the variational problem, i.e., Step (2) above, for the corresponding upper tail problem for random hypergraphs. Here we write $G_{n,p}^{(r)}$ for the random $r$-uniform hypergraph (or simply \emph{$r$-graph}) where every possible edge appears independently with probability $p$. We are interested in estimating upper tail probabilities of the number $X_H$ of copies of some fixed $r$-graph $H$ in this random hypergraph.

Some, but not all, of the developments of large deviation principles for random graphs (i.e., Step (1) above) transfer nicely over to the setting of hypergraphs. Proofs that involve the spectral data of a graph tend to encounter some difficulty as hypergraphs lack a useful spectral decomposition. On the other hand, Eldan's non-linear large deviation principle \cite{Eld18}, which uses the Gaussian width as a measure of complexity, transfer over nicely to hypergraphs, as we explain in the appendix (calculations of a similar spirit were done in \cite{BGSZ,BG}).

It seems likely that other recent breakthroughs on large deviations in random graphs, including \cite{CD,HMS}, might be adapted to the setting of hypergraphs, perhaps allowing the entire range of $p$, but this has yet to be worked out. In any case, even with an improved large deviation principle, one still needs to solve a  variational problem to determine the large deviation rate for the upper tail theorem for random hypergraphs, and even the form of the rate function appears to be highly non-trivial.

Thus we now turn our focus to the variational problem. It turns out, as we explain in this paper, the situation appears to be much more intricate than graphs, and it takes some effort to even state a reasonable conjecture. Our conjecture is essentially that the rate is obtained by planting a certain ```compatible'' and ``stable'' collection of boxes (which we call ``mixed hubs'') onto the random hypergraph. We shall motivate the formulation of the conjecture in several steps by explaining why some na{\"i}ve versions of the statement must be incorrect. In addition, we verify our conjecture in two different cases: 

\begin{enumerate}
\item[(1)] $H$ is a clique: this case is similar to clique counts in a random graph, which was solved in \cite{LZ17}
\item[(2)] $H$ is the 3-uniform hypergraph in \cref{fig:graph1}: this case already requires new techniques (multiscale thresholding) not present in earlier works.
\end{enumerate}

We study the 3-graph from \cref{fig:graph1} since it is the first interesting example of a hypergraph whose proof requires new methods, and its analysis already requires substantial work. Combining our knowledge of the large deviation principle and the solution of the variational problem, we state the following results.

\begin{theorem} \label{thm:rate}
	Let $r \ge 2$ and fix an $r$-graph $H$. Fix a real $\delta > 0$. Let $X_H$ denote the number of copies of $H$ in the random $r$-graph $G_{n,p}^{(r)}$, where $p = p(n)$ satisfies $p = o(1)$ and $p > n^{-\frac{1}{6|E(H)|}}\log n$. 
	\begin{enumerate}
		\item [(a)] If $H = K_k^{(r)}$ (clique on $k$ vertices), then
		\[
		\PP( X_H \ge (1+\delta) \EE X_H)
		=
		\exp\left(-(1+o(1)) \min\left\{ \frac{\delta^{r/k}}{r!} , \frac{\delta}{ (r-1)! k}  \right\} n^r p^{\binom{k-1}{r-1}} \log(1/p) \right).
		\]
		\item[(b)] If $H$ is the 3-graph in \cref{fig:graph1}, then 
		\[
		\PP( X_H \ge (1+\delta) \EE X_H)
		=
		\exp\left(-\paren{\frac{1}{6} + o(1)} \min\left\{ \sqrt{9+3\delta} - 3, \sqrt{\delta} \right\}  n^3 p^2 \log(1/p) \right).
		\]
	\end{enumerate}
\end{theorem}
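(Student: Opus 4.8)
The plan is to combine a nonlinear large deviation principle with an asymptotic solution of the resulting variational problem; in each of parts (a) and (b) this solution splits into a \emph{construction}, giving the matching upper bound on the upper-tail rate, and an \emph{extremal argument}, giving the matching lower bound.

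\textbf{Step 1: reduction to a variational problem.} By the nonlinear large deviation principle for homomorphism counts in random $r$-graphs (Eldan's Gaussian-width framework, carried out for $r$-graphs in the appendix), in the stated range of $p$ one has $-\log\PP(X_H\ge(1+\delta)\EE X_H)=(1+o(1))\,\Phi_{n,p}(H,\delta)$, where
\[
\Phi_{n,p}(H,\delta)=\inf\left\{\sum\nolimits_{e\in\binom{[n]}{r}}I_p(w_e)\ :\ t(H,w)\ge(1+\delta)p^{|E(H)|}\right\},
\]
the infimum being over $w\colon\binom{[n]}{r}\to[0,1]$, with $I_p(x)=x\log\tfrac xp+(1-x)\log\tfrac{1-x}{1-p}$ the relative entropy and $t(H,w)$ the labelled homomorphism density of $H$ in the weighted $r$-graph $w$. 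It then remains to show that $\Phi_{n,p}(H,\delta)$ equals $(1+o(1))$ times the claimed quantity; since $I_p$ is decreasing on $[0,p]$ we may assume $w_e\ge p$ everywhere.

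\textbf{Step 2: the constructions (upper bound on $\Phi$).} Two feasible weightings suffice in each case. \emph{Clique:} set $w_e=1$ for $e$ inside a vertex set $U$ of size $m$ and $w_e=p$ otherwise; every copy of $H$ with all its vertices in $U$ is present, so feasibility needs $m^{|V(H)|}\gtrsim\delta\,n^{|V(H)|}p^{|E(H)|}$, and taking $m=\delta^{1/|V(H)|}np^{|E(H)|/|V(H)|}$ (copies using fewer than $|V(H)|$ vertices of $U$ being negligible) the cost is $(1+o(1))\binom mr\log(1/p)$, which equals $(1+o(1))\tfrac{\delta^{r/k}}{r!}n^rp^{\binom{k-1}{r-1}}\log(1/p)$ when $H=K_k^{(r)}$ (using $r\binom kr/k=\binom{k-1}{r-1}$) and $(1+o(1))\tfrac16\sqrt\delta\,n^3p^2\log(1/p)$ for the octahedral $H$ of part (b). \emph{Hub:} fix $S\subseteq[n]$ with $|S|=h$ and set $w_e=1$ whenever $e\cap S\neq\emptyset$, $w_e=p$ otherwise. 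For $H=K_k^{(r)}$ each vertex of $S$ together with a copy of $K_{k-1}^{(r)}$ in $[n]\setminus S$ completes a copy of $H$, so $h\,n^{k-1}p^{\binom{k-1}{r}}\gtrsim\delta\,n^kp^{\binom kr}$ suffices; with $h=\tfrac\delta k np^{\binom{k-1}{r-1}}$ the cost is $(1+o(1))\tfrac{\delta}{(r-1)!\,k}n^rp^{\binom{k-1}{r-1}}\log(1/p)$. For the octahedral $H$, identify its six vertices with $E(K_4)$ and its four edges with the four vertex-stars of $K_4$; then a copy meeting $S$ in one vertex has two faces through $S$ and two free faces, while a copy meeting $S$ in two vertices forming a perfect matching of $K_4$ has all four faces through $S$. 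Writing the cost as $\gamma n^3p^2\log(1/p)$, so $h=\Theta(\gamma np^2)$, these two families produce $(12\gamma+12\gamma^2+o(1))\EE X_H$ extra copies, so feasibility holds once $\delta=12\gamma^2+12\gamma$, i.e.\ $\gamma=\tfrac16(\sqrt{9+3\delta}-3)$. One checks that weight $1$ is optimal among these shapes (a hub at weight $p^\beta$ costs $\asymp n^3p^{2-\beta}(1-\beta)\log(1/p)$). Thus in each case $\Phi_{n,p}(H,\delta)$ is at most $(1+o(1))$ times the claimed minimum.

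\textbf{Step 3: the variational lower bound for cliques.} For $H=K_k^{(r)}$ we follow the scheme of \cite{LZ17}: given a near-optimal $w$, threshold the edge weights to produce a ``heavy'' $r$-graph $G$ of cost $(1+o(1))|E(G)|\log(1/p)$, and expand $t(K_k^{(r)},w)$ according to which edges of a copy are heavy, obtaining a sum over sub-$r$-graphs $F\subseteq K_k^{(r)}$ in which the light edges contribute only powers of $p$. Feasibility forces some nonempty $F$ to be heavily used, which reduces matters to minimizing $|E(G)|$ over $r$-graphs $G$ carrying $\gtrsim\delta\,p^{|E(F)|}n^{|V(F)|}$ homomorphic copies of $F$; a generalized-Hölder (entropy) inequality, saturated exactly for $G$ a clique and for $G$ a hub, shows the minimum over all $F$ equals $\min\{\tfrac{\delta^{r/k}}{r!},\ \tfrac{\delta}{(r-1)!k}\}\,n^rp^{\binom{k-1}{r-1}}\log(1/p)^{-1}\cdot\log(1/p)$ --- that is, the two constructions of Step 2 are simultaneously optimal.

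\textbf{Step 4: the variational lower bound for the octahedron --- the main obstacle.} Here a single threshold is insufficient: a near-optimal $w$ may carry mass at several polynomial scales $p^{\beta}$ ($\beta\in[0,1)$) at once, each costing $\asymp n^3p^{2-\beta}(1-\beta)\log(1/p)$ per unit weight but contributing to $t(H,w)$ only when the scales are ``compatible'' with the fact that any two faces of $H$ share a vertex. The remedy is a multiscale thresholding argument: fix a slowly growing hierarchy $p=\tau_0<\tau_1<\dots<\tau_L=1$, split the edges of $w$ by the interval $[\tau_{i-1},\tau_i)$ containing $w_e$, and run the homomorphism-count expansion over all sub-$3$-graphs $F\subseteq H$ and all assignments of $E(F)$ to scales, lower-bounding the total cost $\sum_i\sum_{e:\,w_e\in[\tau_{i-1},\tau_i)}I_p(w_e)$. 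One then solves the resulting optimization --- over $F\in\{\emptyset,\text{a single edge},\text{two edges sharing a vertex},\dots,H\}$, over the scale of each heavy edge, and over how the light part completes each copy --- and shows its value is $\tfrac16\min\{\sqrt{9+3\delta}-3,\ \sqrt\delta\}\,n^3p^2\log(1/p)$, the two optimizers being precisely the single-scale clique and the single-scale hub of Step 2. The crux, where genuinely new work is needed, is controlling this multiscale optimization: ruling out that spreading the planted structure across several scales --- equivalently, using a ``mixed hub'' assembled from several boxes at different scales rather than a single clique or hub --- can ever beat the two extremal single-scale constructions. We expect this cross-scale convexity/entropy analysis, which has no analogue in the clique case or in the earlier graph literature, to be the main technical bottleneck.
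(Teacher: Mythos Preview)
Your reduction step and your constructions are fine and match the paper. The divergence is in the variational lower bounds, where your sketch describes a route that is both different from the paper's and, as written, has real gaps.

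For the clique case you propose to ``threshold the edge weights to produce a heavy $r$-graph $G$ of cost $(1+o(1))|E(G)|\log(1/p)$'' and then solve a combinatorial extremal problem on $G$. This presumes the entropic problem reduces to the $\{p,1\}$-valued one, but that reduction is not free: an edge of weight $p^{1/2}$, say, costs $\sim\tfrac12 p^{1/2}\log(1/p)$, not $\log(1/p)$, and there is nothing in your outline ruling out near-optimizers with substantial mass at such intermediate scales. The paper avoids this issue entirely: it never thresholds edges. It sets $U=W-p$, uses the pointwise bound $I_p(p+x)\gtrsim x^2\log(1/p)$ to control $\int U^2$, and then thresholds \emph{vertices} by $d_2(x)=\int U(x,\cdot)^2$ into a single high-degree set $B_b$. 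After expanding $t(K_k^{(r)},p+U)$ over subgraphs, a hypergraph lemma shows that only the full clique and the star $S_k^{(r)}$ survive; each of these is then bounded via the generalized H\"older inequality on the regions $\overline{B}_b^k$ and $B_b\times\overline{B}_b^{k-1}$, yielding the two-term optimization in $\theta_b,\eta_b$ that produces the claimed minimum.

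For the octahedral $3$-graph your instinct that a multiscale argument is needed is exactly right, but the paper's scales live on vertices, not edges. The paper partitions $V(G)$ into \emph{three} classes $B_1,B_2,B_3$ according to whether $d(x)=\int U(x,\cdot,\cdot)$ lies above $b_1$, between $pb_2$ and $b_1$, or below $pb_2$ --- note the two thresholds differ by nearly a factor of $p$, which is the genuinely new feature. A further crucial ingredient you do not mention is \emph{adaptive thresholding}: $b_1$ and $b_2$ are chosen depending on $U$ (via a pigeonhole over a hierarchy of candidate thresholds) so that the cross-terms with $x\in B_1,\,x'\in\overline{B}_1$ and $x\in B_2,\,x'\in B_3$ are negligible. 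After this, the subgraph expansion of $t(H,p+U)$ is controlled region-by-region by four scalars $\theta_1,\theta_2,\theta_3,\eta$ (integrals of $U^2$ over the various $B_i$-products), and the final convex optimization over these four scalars gives $\tfrac16\min\{\sqrt{9+3\delta}-3,\sqrt\delta\}$. Your proposed hierarchy $p=\tau_0<\dots<\tau_L=1$ of edge-weight thresholds would instead produce an optimization over all assignments of edges of $H$ to $L$ scales, and you give no mechanism for collapsing that to a finite problem --- the paper's vertex-based three-class decomposition with adaptive thresholds is precisely the device that does this.
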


\begin{figure}
\begin{equation*}
\begin{tikzpicture}[scale=.8, every node/.style={draw, circle, black, fill, inner sep = 0pt, minimum width = 3pt},font=\footnotesize]
	\node (1) at (0, 0) {};
	\node (2) at (2, 0) {};
	\node (3) at (0, 2) {};
	\node (4) at (1.3, 1.3) {};
	\node (5) at (intersection of 1--2 and 3--4) {};
	\node (6) at (intersection of 1--3 and 2--4) {};
	\draw (2)--(6)--(1)--(5)--(3);
\end{tikzpicture}
\end{equation*}
\caption{A $3$-graph where dots denote vertices and each line denotes an edge (a triple of vertices).}
\label{fig:graph1}
\end{figure}
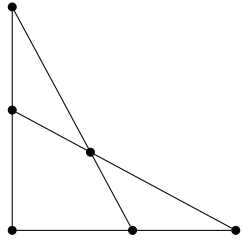

\subsection{Paper Organization} The remainder of the paper is organized as follows. In Section \ref{sec:ldp} we introduce the variational problem,  state the solution to the variational problem for the setting of complete hypergraphs and the 3-graph from \cref{fig:graph1}, and discuss the solution to the variational problem in the case of graphs. In Section \ref{sec:future} we motivate and introduce a conjectural solution to the variational problem in hypergraphs. In Section \ref{sec:lowerbound} we solve the variational problem for complete hypergraphs. In Section \ref{sec:orderrate} we determine the order of the rate function for general hypergraphs. In Section \ref{sec:quad} we solve the variational problem for the 3-graph from \cref{fig:graph1}, and discuss potential future directions. Finally, in Appendix \ref{app:ldp} we show the reduction of the upper tail problem in hypergraphs to the variational problem.

\section{The variational problem}
\label{sec:ldp}

\subsection{Notation}
\label{sec:notation}
It will be convenient to use integral notation. Given a subset $S \subset [k]:=\{1,\dots,k\}$, write $\dx_S := \prod_{i \in S} \dx_i$. Given a finite set $V$ and a function $f \colon V^k \to \RR$, we write
\[
\int f(x_1, x_2, \dots, x_k) \dx_{[k]}:= \frac{1}{|V|^k} \sum_{(x_1, x_2, \dots, x_k) \in V^k} f(x_1, x_2, \dots, x_k).
\]
In other words, we are endowing $V$ with the averaging measure.
For a subset $S \subseteq V^k$, we write
\[
\int_S f(x_1, x_2, \dots, x_k) \dx_{[k]} := \frac{1}{|V|^k} \sum_{(x_1, x_2, \dots, x_k) \in S} f(x_1, x_2, \dots, x_k).
\]

For a vector $(x_1, x_2, \dots, x_k) \in V^k$ and a subset $S \subseteq [k]$, let $x_S := (x_i : i \in S).$ For example, $x_{\{1, 3, 4 \}}$ denotes the $3$-tuple $(x_1, x_3, x_4).$ 

For an $r$-graph $H$ with $V(H) = [k]$, a set $\mathcal{S}$, and a symmetric function $W \colon \mathcal{S}^r \to \RR$ (here \emph{symmetric} means $W(x_1, \dots, x_r) = W(x_{\sigma(1)}, \dots, x_{\sigma(r)})$ for every permutation $\sigma$ of $[r]$), define the $H$-density in $W$ by
\begin{equation}
\label{eq:thwdef}
t(H, W) = \int \prod_{S \in E(H)} W(x_S) \ \dx_{[k]}.
\end{equation}
For example, for $H = K_4^{(3)}$, the complete $3$-graph on $4$ vertices,
\[
 t(K_4^{(3)}, W) = \int W(x_1, x_2, x_3)W(x_1, x_2, x_4)W(x_1, x_3, x_4)W(x_2, x_3, x_4) \dx_1\dx_2\dx_3\dx_4.
\]

We use the following asymptotic notation. Let $f$ and $g$ be nonnegative-valued functions of $n$. As $n \to \infty$, we write $f \ll g$ and $f = o(g)$ to mean $f/g \to 0$; we write $f \ls g$, $f = O(g)$, and $g = \Omega(f)$ to mean $f \le C g$ for some constant $C$; we write $f \asymp g$ and $f = \Theta(g)$ to mean $f \ls g \ls f$; finally, we write $f \sim g$ to mean $f = (1+o(1)) g$.

The \emph{degree} of a vertex of a hypergraph is the number of edges containing the vertex. We write $\Delta(H)$ (or $\Delta$ if $H$ is clear from context) for the maximum degree of $H$.

\subsection{The variational problem}
Let us state the entropic variational problem associated to the large deviation problem for random hypergraphs.

An \emph{edge-weighted $r$-graph} on $n$ vertices with edge-weights in $[0, 1]$ is given by the data $A(G) = (a_{i_1, i_2, \dots, i_r})_{1 \le i_1, i_2, \dots, i_r \le n}$ with entries in $[0,1]$ and is invariant under permuting the order of the indices, i.e., $a_{i_1, i_2, \dots, i_r} = a_{i_{\sigma(1)}, i_{\sigma(2)}, \dots, i_{\sigma(r)}}$ for all permutations $\sigma$ of $[r]$, and also $a_{i_1, i_2, \dots, i_r} = 0$ unless all $i_1, \dots, i_r$ are distinct.
Such an edge-weighted $r$-graph $G$ can be viewed as a symmetric function $G \colon V(G)^r \to [0,1]$. Also we can define $t(H, G)$, the $H$-density in $G$, as in \cref{eq:thwdef}.

Denote the relative entropy of $G$ (relative to the random hypergraph $G_{n,p}^{(k)}$) by
\[
I_p(G) := \sum_{1 \le i_1 < i_2 < \dots < i_r \le n} I_p(a_{i_1, \dots, i_r}) \quad \text{ where } I_p(x) := x \log \frac{x}{p} + (1-x) \log \frac{1-x}{1-p}.
\]
The \emph{variational problem} asks to determine the minimum relative entropy of an edge-weighted $r$-graphs among all those with $H$-density at least $(1+\delta)p^{|E(H)|}$, and we denote its value by
\begin{multline} \label{eq:var}
\phi(H, n, p, \delta) = \inf \Bigl\{ I_p(G) : G \text{ an edge-weighted $r$-graph with edge-weights in $[0,1]$ } \\ \text{ and } t(H, G) \ge (1 + \delta)p^{|E(H)|}\Bigr\},
\end{multline}

It follows from existing theorems on nonlinear large deviations, in particular the work of Eldan \cite{Eld18} (it can also be derived from \cite{CD16} with some work) that the upper tail problem for random hypergraphs reduces to the above variational problem. See the Appendix for details. This theorem is the ``Step (1)'' mentioned in the introduction, and it remains open to improve the range of validity of $p$.

\begin{theorem}[Reduction to variational problem]
\label{thm:fromeldanldp}
Let $H$ be an $r$-graph and let $X_H$ denote the number of copies of $H$ in the random hypergraph $G_{n,p}^{(r)}$, where $p =  p(n)$ satisfies $p > n^{-\frac{1}{6|E(H)|}}\log n$ and $p = o(1)$. Then
\begin{align*}
\PP \paren{X_H \ge (1+\delta) \EE X_H} = \exp\left(-(1+o(1)) \phi(H, n, p, \delta) \right).
\end{align*}
\end{theorem}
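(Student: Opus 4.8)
The plan is to deduce \cref{thm:fromeldanldp} from the nonlinear large deviation principle of Eldan~\cite{Eld18} (or Chatterjee--Dembo~\cite{CD16}), applied to the polynomial $X_H$ on the discrete cube; the only input genuinely specific to hypergraphs is an estimate on the ``gradient complexity'' of $X_H$, and it is that estimate which dictates the hypothesis $p>n^{-1/(6|E(H)|)}\log n$. Write $N=\binom{n}{r}$ and view $X_H=X_H(\xi)$ as a function of the edge-indicator vector $\xi\in\{0,1\}^N$ of $G_{n,p}^{(r)}$. Since a copy of $H$ uses each potential edge at most once, $X_H$ is a \emph{multilinear} polynomial of degree $m:=|E(H)|$, so for any product measure $\nu=\bigotimes_e\mathrm{Ber}(a_e)$ one has $\E_\nu X_H=X_H(a)$, which matches (up to the contribution of non-injective maps, of lower order for $p=o(1)$, and up to the fixed normalization relating $\E X_H$ to $t(H,\cdot)$) the quantity constrained in the variational problem \cref{eq:var}; moreover the relative entropy of $\nu$ with respect to $G_{n,p}^{(r)}$ is exactly $\sum_e I_p(a_e)=I_p(G)$ for the edge-weighted $r$-graph $G$ with weights $(a_e)$. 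Provided the gradient complexity of $X_H$ is $o(\phi(H,n,p,\delta))$, the abstract principle then gives
\[
\log\E_{G_{n,p}^{(r)}}e^{\theta X_H}=\sup_{a\in[0,1]^N}\paren{\theta X_H(a)-I_p(a)}+o(\phi(H,n,p,\delta))
\]
uniformly over the relevant range of $\theta>0$, where the lower bound is the exact Gibbs variational identity and the upper bound is the substantive content.

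The main step is bounding the gradient complexity, i.e.\ a normalized version of the Gaussian width $\E_\Gamma\sup_{a\in[0,1]^N}\langle\Gamma,\nabla X_H(a)\rangle$ with $\Gamma$ a standard Gaussian vector. Each coordinate $\partial_eX_H$ is a nonnegative multilinear polynomial of degree $m-1$ counting copies of $H$ rooted at the edge $e$; expanding over embeddings of $H$ and applying polynomial concentration/hypercontractivity to the resulting sums of products of independent Gaussians yields a bound of the shape $n^{r/2}\,p^{c}\,\mathrm{polylog}(n)$ for a suitable exponent $c=c(H)$. Comparing this with the lower bound $\phi(H,n,p,\delta)=\Omega\paren{n^{r}p^{\Delta(H)}\log(1/p)}$ established in \cref{sec:orderrate} shows that the complexity is $o(\phi)$ precisely in the stated range of $p$. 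This estimate is the bulk of the work, and I expect it to be the principal obstacle. (For $r=2$ and $H=K_3$ one recovers the polynomial thresholds of~\cite{CD16,Eld18}.)

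It remains to pass from the free-energy asymptotics to the tail estimate. For the upper bound on $\PP(X_H\ge(1+\delta)\E X_H)$, apply Markov's inequality to $e^{\theta X_H}$, insert the free-energy bound, and optimize over $\theta$; the delicate point is that $X_H$ is not concave, so one cannot naively interchange $\inf_\theta$ and $\sup_a$, and instead one restricts $\theta$ to the range in which the optimizing $a$ remains in the regime governing $\phi$, as in~\cite{CD16,Eld18}, obtaining $-\log\PP(X_H\ge(1+\delta)\E X_H)\ge(1-o(1))\phi(H,n,p,\delta)$. For the matching lower bound on the probability, take a near-optimal edge-weighted $r$-graph $G^\ast$ for \cref{eq:var} (allowing a slight increase of $\delta$ to $\delta+\eps$, absorbed using a mild continuity of $\phi$ in $\delta$ — which in particular follows from the formulas in \cref{thm:rate} in the cases at hand), let $\nu$ be the product measure with edge-marginals $G^\ast$, and combine the change-of-measure inequality $\log(1/\mu(\mathcal A))\le\paren{D(\nu\|\mu)+O(1)}/\nu(\mathcal A)$ — with $\mu=G_{n,p}^{(r)}$, $\mathcal A=\{X_H\ge(1+\delta)\E X_H\}$ and $D(\nu\|\mu)=I_p(G^\ast)\le(1+o(1))\phi(H,n,p,\delta)$ — with the fact that $\nu(\mathcal A)\to1$; the latter follows by decomposing $X_H$ under $\nu$ into a concentrating ``bulk'' of order $\E_\mu X_H$ plus the boost coming from the planted structure of $G^\ast$ (for the explicit optimizers relevant to \cref{thm:rate} this is a direct second-moment computation). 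Combining the two directions yields $\PP(X_H\ge(1+\delta)\E X_H)=\exp(-(1+o(1))\phi(H,n,p,\delta))$.
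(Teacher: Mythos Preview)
Your framework is correct and matches the paper: apply Eldan's nonlinear LDP to the multilinear polynomial $f=Nn^{-|V(H)|}T_H$ on $\{0,1\}^N$ with $N=\binom{n}{r}$, the sole hypergraph-specific input being a bound on the Gaussian width $\DiscGW(f)$ of the discrete-gradient image. However, your treatment of this key step is where the proposal becomes vague and, as written, does not go through.

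You claim a Gaussian-width bound of shape $n^{r/2}p^{c}\,\mathrm{polylog}(n)$ via ``polynomial concentration/hypercontractivity''. But the supremum defining $\DiscGW(f)$ ranges over \emph{all} $x\in\{0,1\}^N$, with no $p$ in sight, so there is no mechanism to produce a factor $p^{c}$; and hypercontractivity controls moments of a fixed Gaussian polynomial, not a supremum over a combinatorial family. The paper instead proves the $p$-free bound $\DiscGW(f)\lesssim n^{(2r-1)/2}$ by a cut-norm argument: for any $x\in\{0,1\}^N$ one has $\langle\Gamma,\nabla T_H(x)\rangle\lesssim n^{|V(H)|-r}\|\Gamma\|_{\square^r}$, where $\|\Gamma\|_{\square^r}=\sup_{u_1,\dots,u_r}\bigl|\sum_{i_1,\dots,i_r}\Gamma(i_1,\dots,i_r)\prod_{k}u_k(i_{[r]\setminus\{k\}})\bigr|$ over $u_k:[n]^{r-1}\to\{0,1\}$. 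Since there are only $2^{O(n^{r-1})}$ choices of $(u_1,\dots,u_r)$, a union bound gives $\E\|\Gamma\|_{\square^r}\lesssim\sqrt{n^{r}\cdot n^{r-1}}=n^{(2r-1)/2}$. Plugging into Eldan's error term yields $LN^{-1/3}\lesssim\eps^{-1}n^{-1/6}(\log n)^{2/3}$, which is $o(1)$ precisely when one can pick $\eps$ with $n^{-1/6}(\log n)^{2/3}\ll\eps\ll p^{|E(H)|}$; this is exactly the origin of the hypothesis $p>n^{-1/(6|E(H)|)}\log n$.

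Your route to the two directions is also more circuitous than needed. Eldan's theorem, as quoted in the paper, already gives \emph{both} the upper and lower bounds on $\log\Pr[f(X)\ge tN]$ directly in terms of $\phi_p^f$, with errors controlled by $\DiscLip(f)$ and $\DiscGW(f)$; one need not pass through free-energy asymptotics, optimize over $\theta$, or run a separate tilting argument. In particular, your lower-bound sketch requires $\nu(\mathcal A)\to1$ for the tilted measure, which you justify only for the explicit optimizers of \cref{thm:rate}; but \cref{thm:fromeldanldp} is stated for arbitrary $H$, so this is incomplete (and invoking continuity of $\phi$ in $\delta$ ``from the formulas in \cref{thm:rate}'' is circular). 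The paper avoids all of this: Eldan's lower bound needs only $\DiscLip(f)^2/(N\eps^2)\le\tfrac12$, which is immediate from $\DiscLip(f)\lesssim1$ and $\eps\gg N^{-1/2}$.
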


In our theorem below, we determine the order of the rate function $\phi(H, n, p, \delta)$.

\begin{theorem}[Order of the solution to the variational problem]
\label{thm:asympbound}
Let $H$ be an $r$-graph with maximum degree $\Delta \ge 2$. If $n^{-1/\D} \ll p \ll 1$, then
\[ \phi(H, n, p, \delta) \asymp n^r p^{\Delta} \log(1/p). \]
\end{theorem}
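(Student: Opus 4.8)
The plan is to establish matching upper and lower bounds on $\phi(H,n,p,\delta)$.

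\textbf{Upper bound (a planted hub).} Fix a vertex $v^\ast$ of $H$ with $\deg_H(v^\ast)=\Delta$ and let $S_1,\dots,S_\Delta$ be the edges through $v^\ast$. Since $n^{-1/\Delta}\ll p$ we have $np^\Delta\to\infty$, so we may pick $U\subseteq[n]$ with $|U|=m:=\lceil 2\delta\,np^\Delta\rceil=o(n)$ and set $a_e=1$ if $e\cap U\neq\varnothing$ and $a_e=p$ otherwise. The $(1-o(1))m\,n^{|V(H)|-1}$ copies of $H$ that send $v^\ast$ into $U$ and every other vertex outside $U$ have their $\Delta$ edges at $v^\ast$ of weight $1$ and the remaining $|E(H)|-\Delta$ edges of weight $p$, so they contribute $(1-o(1))(m/n)p^{|E(H)|-\Delta}\ge(1-o(1))2\delta p^{|E(H)|}$ to $t(H,G)$; adding the $(1-o(1))p^{|E(H)|}$ from copies missing $U$ gives $t(H,G)\ge(1+\delta)p^{|E(H)|}$ for $n$ large. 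As $I_p(1)=\log(1/p)$ and only edges meeting $U$ are reweighted, $I_p(G)\le m\binom{n-1}{r-1}\log(1/p)\ls_\delta n^rp^\Delta\log(1/p)$, which is the desired upper bound on $\phi$.

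\textbf{Lower bound (reduction).} Let $G=W$ be an edge-weighted $r$-graph with $t(H,W)\ge(1+\delta)p^{|E(H)|}$; I want $I_p(W)\gs_{\delta,H}n^rp^\Delta\log(1/p)$. Choose $\eta=\eta(\delta,H)>0$ with $(1+\eta)^{|E(H)|}=1+\delta/2$, put $B:=\{e:W_e>(1+\eta)p\}$, and decompose $W=V+U$ with $V:=\min(W,(1+\eta)p)$ and $U:=(W-(1+\eta)p)_+\in[0,1)$, supported on $B$. Expanding
\[
t(H,W)=\sum_{F\subseteq E(H)}\int\prod_{S\in F}U(x_S)\prod_{S\notin F}V(x_S)\,\dx_{[k]},
\]
the $F=\varnothing$ term equals $t(H,V)\le((1+\eta)p)^{|E(H)|}=(1+\delta/2)p^{|E(H)|}$, while in every other term $\prod_{S\notin F}V(x_S)\le((1+\eta)p)^{|E(H)|-|F|}$ pointwise and the $U$-factors depend only on $x_{V_F}$, $V_F:=\bigcup_{S\in F}S$. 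Pigeonholing over the at most $2^{|E(H)|}$ nonempty $F$ gives a nonempty $F$ with $t(H_F,U)\ge c_1 p^{|F|}$, where $H_F:=(V_F,F)$ is an $r$-graph and $c_1=c_1(\delta,H)>0$.

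\textbf{Lower bound (entropy moments and Finner).} Two facts finish the job. First, elementary one-variable estimates give, for $p$ small, $\sup_{x\in((1+\eta)p,1]}x/I_p(x)\ls_\delta 1$ and $\sup_{x\in((1+\eta)p,1]}x^{2}/I_p(x)\ls_\delta 1/\log(1/p)$ (the second encodes that a near-$1$ edge weight already costs $\asymp\log(1/p)$ of entropy); since $0\le U_e\le W_e\in((1+\eta)p,1]$ on $B$ and $I_p\ge0$, this yields for every integer $q\ge1$
\[
\sum_{e\in B}U_e^{q}\ \le\ \Bigl(\sup_{x\in((1+\eta)p,1]}\tfrac{x^{q}}{I_p(x)}\Bigr)\sum_{e\in B}I_p(W_e)\ \ls_\delta\ \begin{cases}I_p(W)& q=1,\\ I_p(W)/\log(1/p)& q\ge2.\end{cases}
\]
Second, let $d:=\min_{u\in V_F}\deg_{H_F}(u)\ge1$; then $w_S\equiv 1/d$ is a fractional edge cover of $H_F$, so the generalized H\"older (Finner) inequality gives $t(H_F,U)\le\prod_{S\in F}\|U\|_{d}=\bigl(\tfrac{r!}{n^r}\sum_{e\in B}U_e^{d}\bigr)^{|F|/d}$, hence $\tfrac{r!}{n^r}\sum_{e\in B}U_e^{d}\gs_{\delta,H}p^{d}$. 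If $d\ge2$, the moment bound forces $I_p(W)\gs_{\delta,H}n^rp^{d}\log(1/p)\ge n^rp^\Delta\log(1/p)$ because $d\le\Delta(H_F)\le\Delta$. If $d=1$ it only gives $I_p(W)\gs_{\delta,H}n^rp$, but since $\Delta\ge2$ we have $p^{\Delta-1}\log(1/p)\to0$, so $n^rp\ge n^rp^\Delta\log(1/p)$ for $p$ small; either way the lower bound holds.

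\textbf{Where the difficulty lies.} The upper bound is routine. In the lower bound, a single threshold at $(1+\eta)p$ suffices to reach the clean inequality $t(H_F,U)\gs p^{|F|}$; the real point is to push it through Finner's inequality \emph{at an exponent $q=d\ge2$}, since that is precisely the regime in which the heavy edges' entropy is bounded by $I_p(W)/\log(1/p)$ rather than merely $I_p(W)$ (this is what produces the $\log(1/p)$), while $\deg_{H_F}(u)\le\Delta$ pins the power of $p$ to at most $\Delta$. The delicate spots are: choosing the fractional cover (the uniform one $w_S=1/d$) so that this exponent is $\ge2$; salvaging the boundary case $d=1$ via the hypothesis $\Delta\ge2$; and verifying the two supremum bounds on $x^q/I_p(x)$ (together with the fact that Finner's inequality applies with the normalized counting measure on $[n]$). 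Everything else is bookkeeping.
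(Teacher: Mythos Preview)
Your upper bound is fine. The lower bound contains a genuine error in the Finner step.

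You set $d:=\min_{u\in V_F}\deg_{H_F}(u)$ and assert that since $w_S\equiv 1/d$ is a fractional edge \emph{cover} of $H_F$, the generalized H\"older inequality yields $t(H_F,U)\le\|U\|_d^{|F|}$. This is the wrong hypothesis: Finner's inequality (Theorem~4.1 in the paper) requires $\sum_{S\ni v}p_S^{-1}\le 1$ for every vertex $v$, i.e.\ a fractional \emph{matching} condition, which with $p_S\equiv d$ forces $d\ge\Delta(H_F)$, not $d\le\Delta(H_F)$. A concrete counterexample: take $r=2$ and $H_F$ the cherry with edges $\{1,2\},\{1,3\}$, so $d=1$; for $U=\mathbf 1_{[0,1/2]^2}$ one has $t(H_F,U)=\int d(x)^2\,dx=\tfrac18$ while $\|U\|_1^2=(\int U)^2=\tfrac1{16}$, so the claimed inequality fails.

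The fix is immediate: replace $d$ by $\Delta_F:=\Delta(H_F)$. Finner (Corollary~4.2) then gives $t(H_F,U)\le\bigl(\int U^{\Delta_F}\bigr)^{|F|/\Delta_F}$, whence $\int U^{\Delta_F}\gs p^{\Delta_F}$. If $\Delta_F\ge 2$, use $\int U^2\ge\int U^{\Delta_F}$ together with your $q=2$ moment bound and $\Delta_F\le\Delta$. If $\Delta_F=1$ (so $H_F$ is a matching), your $q=1$ bound and the observation $p^{\Delta-1}\log(1/p)=o(1)$ finish as you wrote. The paper's own proof sidesteps this case split entirely: it uses $U=W-p$ (after reducing to $W\ge p$), applies Finner with the single exponent $\Delta=\Delta(H)\ge 2$ uniformly to every subgraph $H'\subseteq H$ in the expansion, deduces $\int U^2\gs p^{\Delta}$ directly, and then invokes $I_p(p+x)\gs x^2\log(1/p)$. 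Your thresholded decomposition at $(1+\eta)p$ and pigeonhole to a single $F$ are correct but unnecessary; the paper's route is shorter.
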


In \cref{sec:future}, we formulate a conjecture on the missing constant in \cref{thm:asympbound}. We can pin down the leading constant for the hypergraphs described in \cref{thm:rate}: cliques and the 3-graph in \cref{fig:graph1}.

\begin{theorem}[Solution to the variational problem]
\label{thm:varsolutions}
	Fix a hypergraph $H$ and a real $\delta > 0$.
	\begin{enumerate}
		\item [(a)] If $H = K_k^{(r)}$ (clique on $k$ vertices) for $r \ge 3$, then for any $n^{-1/\binom{k-1}{r-1}} \ll p \ll 1$ we have that
		\[ \lim_{n\to\infty}\frac{\phi(H,n,p,\d)}{n^rp^{\binom{k-1}{r-1}}\log(1/p)} = \min\left\{ \frac{\delta^{r/k}}{r!} , \frac{\delta}{ (r-1)! k}  \right\}. \]
		\item[(b)] If $H$ is the 3-graph in \cref{fig:graph1}, then for any $n^{-1/2} \ll p \ll 1$ we have that
		\[ \lim_{n\to\infty}\frac{\phi(H,n,p,\d)}{n^3p^2\log(1/p)} = \frac16\min\left\{ \sqrt{9+3\delta} - 3, \sqrt{\delta} \right\}. \]
	\end{enumerate}
\end{theorem}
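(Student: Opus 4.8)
The plan is, in each part, to prove matching upper and lower bounds for $\phi(H,n,p,\d)$. The upper bound amounts to exhibiting explicit near-optimal edge-weighted $r$-graphs and computing their relative entropy; the lower bound is the substantive direction, and for part (b) it requires ideas (multiscale thresholding) beyond those of \cite{LZ17}.

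\textbf{Upper bounds.} For part (a) we use the two constructions from the graph case. The \emph{clique} construction sets $G\equiv1$ on all $r$-subsets of a set $B$ of $\sim\d^{1/k}n\,p^{\binom{k-1}{r-1}/r}$ vertices (and $G\equiv p$ elsewhere): copies of $K_k^{(r)}$ inside $B$ contribute $(|B|/n)^k\sim\d\,p^{\binom kr}$ to the $H$-density, so $I_p(G)\sim\binom{|B|}r\log(1/p)\sim\frac{\d^{r/k}}{r!}n^rp^{\binom{k-1}{r-1}}\log(1/p)$. The \emph{hub} construction sets $G\equiv1$ on all $r$-subsets meeting a set $B$ of $\sim\frac\d k n\,p^{\binom{k-1}{r-1}}$ vertices: copies of $K_k^{(r)}$ with exactly one vertex in $B$ supply the excess ($\sim k(|B|/n)\,p^{\binom{k-1}r}\sim\d\,p^{\binom kr}$, copies meeting $B$ in $\ge2$ vertices being lower order by a degree count), so $I_p(G)\sim\frac{|B|\,n^{r-1}}{(r-1)!}\log(1/p)\sim\frac{\d}{(r-1)!\,k}n^rp^{\binom{k-1}{r-1}}\log(1/p)$. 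The smaller of the two matches the claimed limit. For part (b), the clique construction ($|B|\sim\d^{1/6}n\,p^{2/3}$) yields the term $\tfrac{\sqrt\d}6$. For the hub term we again set $G\equiv1$ on all triples meeting $B$, but now --- since any two edges of $H$ share a vertex, so a well-chosen pair of vertices of $H$ meets all four edges --- the copies of $H$ with two vertices in $B$ contribute at the same order and must be retained: with $|B|=\beta n p^2$ the excess density is $(6\beta+3\beta^2+o(1))\,p^4$, so $\beta$ may be taken as small as the positive root of $3\beta^2+6\beta=\d$, and then $I_p(G)\sim\tfrac\beta2 n^3p^2\log(1/p)=\tfrac16(\sqrt{9+3\d}-3)\,n^3p^2\log(1/p)$.

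\textbf{Lower bound, part (a).} We follow \cite{LZ17}. Since $I_p$ is convex with minimum at $p$ while $t(K_k^{(r)},\cdot)$ is monotone, we may assume $G\ge p$ pointwise. A truncation step then replaces $G$, up to a $1+o(1)$ factor in $I_p$, by an $r$-graph valued in $\{p\}\cup\{1\}$; together with \cref{thm:asympbound} (which fixes the order of magnitude), this reduces matters to minimizing the number of $1$-edges of an $r$-graph $\Gamma$ whose overlay on the $p$-random background raises the $K_k^{(r)}$-density by at least $\d\,p^{\binom kr}$. Expanding that density according to which vertices of a prospective copy are incident to $\Gamma$ splits the excess into finitely many patterns; a Kruskal--Katona / generalized H\"older bound controls each pattern by the edge count and degree sequence of $\Gamma$, and a short case analysis shows the optimum is always attained by one of the two pure patterns --- all edges of the copy in $\Gamma$ (forcing $\Gamma$ to contain many copies of $K_k^{(r)}$, hence many edges, by Kruskal--Katona: the ``clique'' optimum), or the $\binom{k-1}{r-1}$ edges through a single vertex in $\Gamma$ (the ``hub'' optimum). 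The obstacle is the reduction to $\{p,1\}$-valued $G$ together with the check that mixed patterns never beat these two extremes; both are as in \cite{LZ17} and carry over to $r\ge3$ with only notational changes.

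\textbf{Lower bound, part (b).} This is the crux. Assuming $G\ge p$ and writing $G=p+f$ with $f\ge0$, the fact that every two edges of $H$ meet in exactly one vertex collapses the expansion of $t(H,G)$ in powers of $p$ to
\[
t(H,G)-p^4 \;=\; 4p^3\!\int\! f \;+\; 6p^2\norm{g}_2^2 \;+\; 4p\,t(K_3,h)\;+\;t(H,f)\;\ge\;\d\,p^4,
\]
where $g(z)=\int f(x,y,z)\,\dx\,\dy$ is the normalized degree function and $h(x,y)=\int f(x,y,z)\,\dz$ the normalized codegree function; we must conclude $I_p(G)\ge(1-o(1))\tfrac16\min\{\sqrt{9+3\d}-3,\sqrt\d\}\,n^3p^2\log(1/p)$. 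Rounding $f$ to a single weight scale is lossy, because the four terms above are created most cheaply by $f$ living at different scales (the linear term by nearly-$1$ entries, the $\norm{g}_2^2$ term by a spread-out structure at scale $\sim p^2$, and $t(K_3,h)$, $t(H,f)$ by still other profiles). We instead run a multiscale argument: decompose the heavy part of $f$ over geometric weight scales, $f=\sum_\ell f_\ell$ with $f_\ell$ supported where $G$ is of order the $\ell$-th scale, so that $I_p(G)$ is, to leading order, a nonnegative combination of the scale densities $\mu_\ell:=\Vol(\supp f_\ell)$, and each term of the expansion becomes a convex combination over scales of its single-scale value; the delicate cross-scale contributions to $\norm{g}_2^2=\sum_{\ell,\ell'}\int g_\ell g_{\ell'}$ and to $t(K_3,h)$ are controlled by H\"older and AM--GM, so that only a ``diagonal'' allocation of mass survives asymptotically. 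This reduces the problem to an explicit finite-dimensional convex program in $(\mu_\ell)$ --- in effect, how to split the entropy budget between ``hub mass'' and ``clique mass'' --- whose optimum reproduces $\tfrac16\min\{\sqrt{9+3\d}-3,\sqrt\d\}\,n^3p^2\log(1/p)$, the two boundary regimes being precisely the hub and clique constructions of the upper bound. Carrying out this multiscale bookkeeping rigorously --- above all, showing that smearing $f$ across many scales, or across configurations interpolating between hub-like and clique-like behavior, never beats the two extreme constructions --- is where the genuinely new work lies.
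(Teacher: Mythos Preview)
Your upper bounds for both parts are correct and match the paper's constructions.

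For the lower bound in part (a), your route diverges from the paper's and contains a gap. You propose first to truncate $G$ to $\{p,1\}$-values ``up to a $1+o(1)$ factor in $I_p$'' and then run a Kruskal--Katona-style argument, claiming this is ``as in \cite{LZ17} with only notational changes''. But that truncation is not justified (an edge of weight $x\in[p^{1-\varepsilon},1/2]$ has $I_p(p+x)\asymp x\log(1/p)$, so rounding it to $1$ inflates its entropy by a constant factor, not $1+o(1)$), and \cite{LZ17} does not truncate either. The paper instead works with continuous $U=W-p$, expands $t(H,W)=\sum_{H'\subseteq H}p^{|E(H)|-|E(H')|}t(H',U)$, and shows every subgraph $H'$ other than $K_k^{(r)}$ and the star $S_k^{(r)}$ contributes $o(p^{|E(H')|})$. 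This last step is not just notational: it requires a new hypergraph lemma (every proper nonempty $H'\subsetneq K_k^{(r)}$ with $H'\ne S_k^{(r)}$ admits $H''\subseteq H'$ with $|E(H'')|/\max(2,\Delta(H''))>|E(H')|/\binom{k-1}{r-1}$). The two surviving terms are then bounded via a single vertex threshold $B_b=\{x:d_2(x)>b\}$ with $d_2(x)=\int U(x,\cdot)^2$ (the paper notes that using $d_2$ rather than the ordinary degree is actually needed here), plus generalized H\"older; Kruskal--Katona plays no role.

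For the lower bound in part (b), your multiscale is of the wrong kind, and as written the argument cannot succeed. You decompose $f$ by the \emph{value} of the edge weight (``$f_\ell$ supported where $G$ is of order the $\ell$-th scale''), with $\mu_\ell=\Vol(\supp f_\ell)$. But in both optimal constructions $f$ is $\{0,1-p\}$-valued, hence lives on a single weight scale; your decomposition then has one block $\mu$, and the resulting one-variable program cannot distinguish the hub from the clique and so cannot reproduce the two-branch minimum. The paper's multiscale is on \emph{vertex degrees}: with $d(x)=\int U(x,y,z)\,\dy\,\dz$ one takes three bins $B_1=\{d\ge b_1\}$, $B_2=\{pb_2\le d<b_1\}$, $B_3=\{d<pb_2\}$, the two thresholds being a factor $\sim p$ apart (this wide separation is the new ingredient, absent from \cite{LZ17,BGLZ17}). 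Then $t(E_2,U)\le\theta_1+o(p^2)$ with $\theta_1=\int_{B_1\times B_3^2}U^2$, $t(E_3,U)\le\theta_2^{3/2}+o(p^3)$ with $\theta_2=\int_{B_2^2\times B_3}U^2$, and $t(H,U)\le 3\theta_1^2+3\theta_2^2+\theta_3^2+3\eta^2+o(p^4)$ for analogous $\theta_3,\eta$. The off-diagonal pieces (e.g.\ $x\in B_1$, $x'\notin B_1$ in $t(H,U)$) are not controlled by H\"older/AM--GM as you suggest, but by \emph{adaptive thresholding}: one proves there exists a choice of $b_1$ (and separately of $b_2$), depending on $U$, making those pieces $o(p^4)$. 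The final convex program is in $(\theta_1,\theta_2,\theta_3,\eta)$ and solves to the stated minimum.
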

\begin{remark}
The recent development of \cite{HMS} (further extended in \cite{BR19}), using combinatorial techniques, reduces the upper tail problem for graphs (i.e., $r=2$), for certain $H$ (but not yet available for all $H$ at the time of this writing), to a \emph{combinatorial variational problem}, which is \cref{eq:var} with edge-weights of $G$ restricted to take values in $\{p,1\}$. For the variational problem for graphs, the asymptotic solutions for the upper tail problem are indeed of this form, and as such, the solution to the restricted variational problem is implied by that of the more general version \cref{eq:var}. In other words, one has asymptotically optimal solutions to the variational problem~\cref{eq:var} coming from planting some subgraph. We conjecture that the same behavior occurs for hypergraphs as well. Nonetheless, it seems that much of the difficulties of solving the combinatorial variational problem remain the same as that of the entropic variational problem \eqref{eq:var}. In this paper, we study the entropic version \cref{eq:var} since it is more general and also because a general (though suboptimal) large deviation principle for random hypergraphs is already available.
\end{remark}

\subsection{Random graphs: solution to the variational problem} \label{sec:graph-solution}

We start by recalling the solution to the large deviation problem for random graphs, which was solved in~\cite{LZ17,BGLZ17}. Fix a graph $H$ with maximum degree $\Delta$. The variational problem \cref{eq:var} for graphs amounts to minimizing $I_p(G) = \sum_{i < j} I_p(a_{ij})$ over all $n$-vertex edge-weighted graphs $G$ (always with edge-weights in $[0,1]$) satisfying $t(H, G) \ge (1+\d)p^{|E(H)|}$. We are interested in the regime when $p \to 0$ (see \cite{LZ15} for discussions in the case of constant $p$), which was solved for $H$ a clique in \cite{LZ17} and for every $H$ in \cite{BGLZ17}.

For connected $H$, the relative entropy $I_p(G)$ is asymptotically minimized by the construction where we plant either a clique or a hub onto the constant $p$ (the constant $p$ corresponds to $G_{n,p}$).
Specifically, \emph{planting a clique} means choosing a parameter $s$ and setting $a_{ij} = 1$ if both $i \le s$ and $j \le s$, and setting $a_{ij} = p$ otherwise. 
This weighted graph corresponds adding a clique on $s$ vertices onto $G_{n,p}$. 
Take $s \sim cp^{\D/2}n$ with some constant $c > 0$, where we assume $1 \ll s \ll n$. Then by considering which vertices of $H$ get mapped to $[s]$ we compute that
\begin{align} t(H, G) &\sim \sum_{S \subseteq V(H)} \left(\frac{s}{n}\right)^{|S|}\left(1-\frac{s}{n}\right)^{|V(H)|-|S|} p^{|E(H)|-|E(H[S])|} \nonumber \\
&\sim \sum_{S \subseteq V(H)} (cp^{\D/2})^{|S|} p^{|E(H)|-|E(H[S])|} \sim
\begin{cases}
(1+c^{|V(H)|})p^{|E(H)|} & \text{ if } H \text{ is } \D\text{-regular,} \\
p^{|E(H)|} & \text{ otherwise.}
\end{cases}
\label{eq:clique-calc}
\end{align}
The last step follows from the fact that for proper non-empty subsets $S$ of $V(H)$, the term in the sum is $o(p^{|E(H)|})$. Indeed, $|E(H[S])| < \D|S|/2$ for any proper nonempty subset $S$ of $V(H)$ as $H$ is connected.

On the other hand, \emph{planting a hub} means
choosing a parameter $s$ and setting $a_{ij} = 1$ if either $i \le s$ or $j \le s$, and setting $a_{ij} = p$ otherwise. 
This weighted graph corresponds to taking $G_{n,p}$ and making some fixed $s$ vertices adjacent to all vertices.
Take $s = \theta p^\D n$ with some constant $\theta > 0$, where we assume $1 \ll s \ll n$. Let $H^\star$ denote the subgraph of $H$ induced by its degree $\Delta$ vertices. 
We compute that
\begin{align} 
t(H, G) 
&\sim \sum_{S \subseteq V(H)} \left(\frac{s}{n}\right)^{|S|}\left(1-\frac{s}{n}\right)^{|V(H)|-|S|} p^{|E[V(H)\bs S]|} \nonumber \\
&\sim \sum_{S \subseteq V(H)}
	(\theta p^\D)^{|S|} p^{|E[V(H)\bs S]|} \sim P_{H^\star}(\theta)p^{|E(H)|},
 \label{eq:hub-calc}
\end{align}
where
\begin{equation} \label{eq:H-star-indep}
P_{H^\star}(\theta) = \sum_{S \text{ independent set of } H^\star} \theta^{|S|}
\end{equation}
is the \emph{independence polynomial} of $H^\star$. In \cref{eq:hub-calc} we have used that $|S| \Delta + |E[V(H)\bs S]|  \ge |E(H)|$ with equality if and only if $S$ is an independent set and all vertices in $S$ have degree $\D$.

The main result of \cite{BGLZ17} (shown earlier in \cite{LZ17} when $H$ is a clique) is that, depending on the range of the parameter $\d$, either planting a clique or planting a hub is asymptotically optimal for connected $H$.

\begin{theorem}[\!\cite{BGLZ17}] \label{thm:BGLZ}
Let $H$ be a connected graph with maximum degree $\Delta \ge 2$. Let $\delta > 0$. Suppose $ p = p(n)$ satisfies $n^{-1/\Delta} \ll p \ll 1$. Then the variational problem \eqref{eq:var} satisfies
\[
\lim_{n \to \infty} \frac{\phi(H, n, p, \delta)}{n^2p^\Delta \log(1/p)} =
\begin{cases}
\min\set{\theta, \tfrac12 \delta^{2/|V(H)|}}, & \text{if $H$ is regular,}
\\
\theta, 	& \text{if $H$ is irregular,}
\end{cases}
\]
where $\theta = \theta(H, \delta)$ is the unique positive solution to $P_{H^\star}(\theta) = 1+\delta$.
\end{theorem}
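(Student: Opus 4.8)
The plan is to prove matching upper and lower bounds on $\phi(H,n,p,\d)$, with the leading constant $\min\{\theta,\tfrac12\d^{2/|V(H)|}\}$ in the regular case and $\theta$ in the irregular case arising as the cheaper of two explicit constructions.

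For the upper bound I would just write down the two planted structures whose $H$-densities are computed in \cref{eq:clique-calc} and \cref{eq:hub-calc}. Planting a clique on $s=\lceil c\,np^{\D/2}\rceil$ vertices (edge-weights $1$ inside $[s]$ and $p$ elsewhere) has relative entropy $\binom s2 I_p(1)\sim\tfrac12 c^2 n^2 p^\D\log(1/p)$, and by \cref{eq:clique-calc} it achieves $t(H,G)\sim(1+c^{|V(H)|})p^{|E(H)|}$ when $H$ is regular; taking $c=\d^{1/|V(H)|}$ gives a feasible $G$ with $I_p(G)\sim\tfrac12\d^{2/|V(H)|}n^2p^\D\log(1/p)$. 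Planting a hub on $s=\lceil\theta\,np^\D\rceil$ vertices (edge-weight $1$ on every pair meeting $[s]$, $p$ elsewhere) has entropy dominated by its $\sim sn$ hub edges, so $I_p(G)\sim\theta n^2p^\D\log(1/p)$, and by \cref{eq:hub-calc} it achieves $t(H,G)\sim P_{H^\star}(\theta)p^{|E(H)|}$; taking $\theta$ with $P_{H^\star}(\theta)=1+\d$ makes it feasible. Using whichever construction is cheaper --- only the hub is available when $H$ is irregular, since then the $S=V(H)$ term in \cref{eq:clique-calc} is of lower order --- yields the claimed upper bound on $\phi$.

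The lower bound is the substantive half. I would take a feasible $G$, assume for contradiction that $I_p(G)\le(1-\eps)\min\{\theta,\tfrac12\d^{2/|V(H)|}\}n^2p^\D\log(1/p)$, and show $G$ must essentially contain one of the two structures above. First comes a thresholding step: since $I_p(a)\gs a\log(1/p)$ once $a\ge p^{1-\eps}$, the entropy bound forces the total weight carried by ``heavy'' edges (weight $\ge\tfrac12$, say) to be $\ls n^2p^\D$, so the set $U$ of vertices meeting $\Omega(n)$ heavy edges has $|U|\ls np^\D$; and edges of intermediate weight, in $(p^{1-\eps},\tfrac12)$, I would argue by a convexity/rearrangement estimate contribute negligibly to $t(H,G)$, so that up to lower-order terms $G$ behaves like $G_{n,p}$ plus a sparse graph $\Gamma$ of weight-$1$ edges. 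Then I would expand $t(H,G)$ over subsets $F\subseteq E(H)$ recording which edges of a copy of $H$ land in $\Gamma$: each term is $o(p^{|E(H)|})$ \emph{except} (i) $F=E(H)$ with the copy inside a near-clique of $\Gamma$ of size $s$, contributing $\approx(s/n)^{|V(H)|}p^{|E(H)|}$ and relevant within the entropy budget only for regular $H$; and (ii) $F$ a union of stars about an independent set $S$ of $H^\star$ mapped into $U$, these terms summing to $P_{H^\star}(|U|/(np^\D))p^{|E(H)|}$. Feasibility $t(H,G)\ge(1+\d)p^{|E(H)|}$ then forces either $s\gs(1-o(1))\d^{1/|V(H)|}np^{\D/2}$, whence $I_p(G)\ge\binom s2 I_p(1)\ge(1-o(1))\tfrac12\d^{2/|V(H)|}n^2p^\D\log(1/p)$, or $|U|\gs(1-o(1))\theta np^\D$ with each vertex of $U$ in $\gs n$ heavy edges, whence $I_p(G)\ge(1-o(1))\theta n^2p^\D\log(1/p)$; and a short concavity argument (both $\d\mapsto\tfrac12\d^{2/|V(H)|}$ and $\d\mapsto P_{H^\star}^{-1}(1+\d)$ are concave) rules out the two mechanisms splitting the boost between them and beating the minimum. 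Either way we contradict the assumed bound.

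The hard part will be this thresholding/structure step: ruling out that intermediate edge weights, or heavy configurations other than near-cliques and hubs, could inflate $t(H,G)$ more efficiently than the two canonical constructions, and controlling everything tightly enough to recover the \emph{sharp} constants $\tfrac12\d^{2/|V(H)|}$ and $\theta$ rather than merely the order $n^2p^\D\log(1/p)$. This is the technical heart of \cite{LZ17,BGLZ17}, and it is precisely the piece that does not port directly to hypergraphs, which is why the later sections of this paper develop multiscale thresholding.
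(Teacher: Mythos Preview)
This theorem is not proved in the present paper; it is quoted from \cite{BGLZ17} (and \cite{LZ17} for cliques), with only the upper-bound constructions spelled out here via \cref{eq:clique-calc} and \cref{eq:hub-calc}. Your upper bound matches that exposition and is fine.

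For the lower bound, your outline diverges from the actual argument in \cite{LZ17,BGLZ17} in a way that creates a real gap. You propose to threshold \emph{edge weights} into $\{p\}\cup[p^{1-\eps},\tfrac12]\cup[\tfrac12,1]$, discard the intermediate band by a ``convexity/rearrangement estimate,'' and then analyze the resulting $\{p,1\}$-valued graph $\Gamma$ structurally. The step ``intermediate weights contribute negligibly to $t(H,G)$'' is not justified and is not how the proof goes: one cannot round edge weights to $\{p,1\}$ and keep sharp constants. The actual method, whose hypergraph analogue you can see in \cref{sec:lowerbound}, never discretizes edge weights. One sets $U=W-p$, expands $t(H,W)=\sum_{H'\subseteq H}p^{|E(H)|-|E(H')|}t(H',U)$, and uses the pointwise bound $I_p(p+x)\ge(1-o(1))x^2\log(1/p)$ (\cref{lemma:x2}) together with the generalized H\"older inequality (\cref{cor:holder-bounded}) to kill all $H'$ except $H$ itself and the stars centered at maximum-degree vertices; a graph-theoretic lemma about subgraphs of $H$ (the $r=2$ analogue of \cref{lemma:graphtheory2}) is what makes this work for general $H$. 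The separation between ``clique'' and ``hub'' contributions is then achieved by thresholding on the \emph{vertex degree} $d(x)=\int U(x,y)\,\dy$, defining $B_b=\{x:d(x)>b\}$ with an adaptively chosen $b$, and bounding the surviving integrals over $B_b^k$ and $\oB_b^k$ separately. Your $U=\{$vertices meeting $\Omega(n)$ heavy edges$\}$ gestures at this, but the surrounding argument about a sparse $0/1$ graph $\Gamma$ and ``near-cliques of $\Gamma$'' is not the mechanism, and the sharp constants would not fall out of it.

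Your final concavity remark (that one cannot beat the minimum by splitting the $\delta$-boost between the two mechanisms) is correct and is indeed the last step in \cite{BGLZ17}; see the analogous use at \cref{eq:fin}.
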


For disconnected $H$, the asymptotically optimal solution comes from simultaneously planting a clique and a hub \cite[Section 7]{BGLZ17}.

\section{The conjectural solution to the hypergraph variational problem}
\label{sec:future}

\subsection{Extending the graph solution: cliques and hubs}

Let us attempt to formulate a conjectural solution to the variational problem \cref{eq:var} for a general hypergraph $H$, similar to \cref{thm:BGLZ} for graphs.

Similar to the graph case described in \cref{sec:graph-solution}, a natural guess would be that the variational problem is asymptotically solved by planting cliques and hubs.

Given an $r$-graph $G_0$ with vertex set $[n]$, we say that an edge-weighted $r$-graph $G$ on vertex set $[n]$ arises from \emph{planting $G_0$} if the edge-weights $(a_e)_{e\in E(G)}$ of $G$ satisfy $a_e = 1$ whenever $e$ is an edge of $G_0$ and $a_e = p$ otherwise.

Let us attempt to state the asymptotic solution to the variational problem. We will describe a parameterized family of edge-weighted graphs that could serve as the asymptotic optimizer. We always assume a fixed hypergraph $H$ and $n^{-c_H} \ll p \ll 1$.

Motivated by the graph case, where the solution comes from planting a union of a clique and a hub (when $H$ is connected, one plants either a clique or a hub), one could conjecture that the same happens in the hypergraph setting as well. For hypergraphs, a \emph{clique} consists of all $r$-tuples contained in a given set $S$ of vertices, whereas a \emph{hub} consists of all $r$-tuples that intersect a given set $S$ of vertices. In particular, we shall only consider planting cliques and hubs where the corresponding set $S$ is a prefix of the vertex set $[n]$ (though the size of $S$ could be different for the clique and the hub even if both are simultaneously planted).

\begin{nconj}
\label{nconj1}
The variational problem \cref{eq:var} is asymptotically optimized by planting a clique and a hub.
\end{nconj}

The above na{\"i}ve conjecture is true for graphs. For hypergraphs, we will also show that it is true when $H$ is a clique. However, the na{\"i}ve conjecture fails in general.

One attempt to rectify the conjecture is to extend the notion of cliques and hubs for hypergraphs.
In an $r$-graph, define a \emph{$k$-hub} to be all $r$-tuples of vertices that contains at least $k$ vertices from some specified prefix of the vertex set $[n]$.

For example, in an $r$-graph, $1$-hubs are hubs and $r$-hubs are cliques. 

\begin{nconj}
\label{nconj2}
The variational problem \cref{eq:var} for $r$-graphs is asymptotically optimized by planting a union of a $1$-hub, a $2$-hub, \dots, and a $r$-hub.
\end{nconj}

Unfortunately, \cref{nconj2} remains false, as we now give a counterexample. 
The counterexample 3-graph $H$ has 13 vertices and 15 edges. Its edges are given as follows (also see \cref{fig:counterexample-H}), where whenever we write a pair, we extend it to a triple by adding a new dummy vertex (all the dummy vertices are distinct):
\begin{align*}
 &(A, B), (B, C), (A, C), (A, D, F), (B, D, E), (C, E, F), \\ 
 &(A', B'), (B', C'), (A', C'), (A', D', F'), (B', D', E'), (C', E', F'), \\ 
 &(D, D', G), (E, E', G), (F, F', G). 
\end{align*}

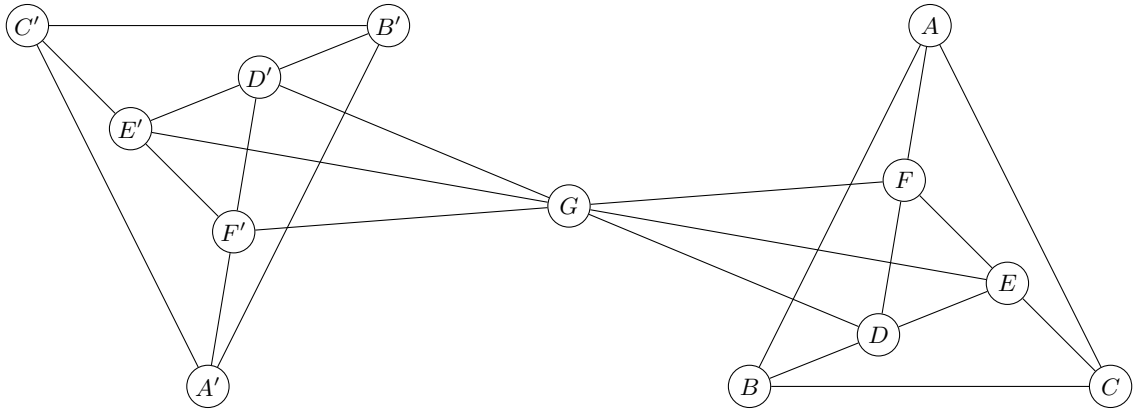
\begin{figure}
\begin{center}
\begin{tikzpicture}[baseline=(current bounding box.center),scale=1.2, every node/.style={draw, circle, fill=white, inner sep = 1pt, minimum width = 16pt},font=\footnotesize]
	\node (1) at (0, 0) {$G$};
	\node (2) at (4, 2) {$A$};
	\node (3) at (2, -2) {$B$};
	\node (4) at (6, -2) {$C$};
	\node (5) at (26/7, 2/7) {$F$};
	\node (6) at (24/7, -10/7) {$D$};
	\node (7) at (34/7, -6/7) {$E$};
	\node (8) at (-4, -2) {$A'$};
	\node (9) at (-2, 2) {$B'$};
	\node (10) at (-6, 2) {$C'$};
	\node (11) at (-26/7, -2/7) {$F'$};
	\node (12) at (-24/7, 10/7) {$D'$};
	\node (13) at (-34/7, 6/7) {$E'$};
	\draw (2)--(3)--(4)--(2);
	\draw (8)--(9)--(10)--(8);
	\draw (5)--(1)--(11);
	\draw (6)--(1)--(12);
	\draw (7)--(1)--(13);
	\draw (2)--(5)--(6);
	\draw (3)--(6)--(7);
	\draw (4)--(7)--(5);
	\draw (8)--(11)--(12);
	\draw (9)--(12)--(13);
	\draw (10)--(13)--(11);
\end{tikzpicture}
\end{center}
\caption{Counterexample graph to Naive Conjecture \ref{nconj2}. Only vertices of degree $3$ are drawn -- we can view the structure as a $3$-graph by completing each edge drawn with only two vertices with a new dummy vertex (a distinct dummy vertex for each such edge).}
\label{fig:counterexample-H}
\centering
\end{figure}

Note that all the labeled vertices (i.e., other than the omitted dummy vertices) have degree equal to the maximum degree $\Delta = 3$.

Consider the 3-graph $G_0$ on vertex set $[n]$ whose edges are all $\{i_1, i_2, i_3\} \in \binom{[n]}{3}$ with $i_1 < i_2 < i_3$ with $i_1 \le c p^{3/2}n$ and $i_2,i_3 \le c' p^{3/4} n$ for some appropriately chosen constants $c,c' > 0$. This construction is not a union of $k$-hubs. However, as we shall verify in \cref{sec:counterexample-proof}, this construction performs better than a union of $k$-hubs.

\subsection{Mixed hubs}

The above counterexample construction motivates the following generalization of a $k$-hub. In an $r$-graph, we define a \emph{mixed hub} to be the subgraph consisting of all edges $\{i_1, \dots, i_r\}$ with $i_1 \le n_1$, \dots, $i_r \le n_r$ for some specified $n_1, \dots, n_r$. Note that this set of edges is almost but not quite the same as the cartesian box $[n_1] \times \cdots \times [n_r]$ since edges consist of $r$-tuples of distinct vertices. Note that all $k$-hubs can be described in this form, by setting $k$ of the $n_i$ to be equal, and setting the rest to $n$.

Naturally, one could conjecture that the optimal solution consists of a union of mixed hubs. Though such a statement would not make a particularly useful conjecture since the space of possibilities for a union of mixed hubs is not finitely parametrizable. In order to make a more useful conjecture, let us look at what kind of widths $n_i$'s we should take in the construction of the mixed hubs in order to contribute meaningfully to the variational problem.

Going forward, we consider constructions with a given sequence $p = p(n) \gg n^{-1/\Delta}$ and $p = o(1)$. 

Let $t_1, \dots, t_r \ge 0$ with $t_1 + \cdots  + t_r = 1$. In an $r$-graph, a \emph{$(t_1, t_2, \dots, t_r)$-mixed hub} consists of all edges that can be written as an element of $[n_1] \times \cdots \times [n_r]$ with $n_i \sim c_i p^{t_i \Delta} n$, where $c_1, \dots, c_r > 0$ are constants, and $n_i = n$ whenever $t_i = 0$.

The requirement $t_1 + \cdots + t_r = 1$ in a mixed hub is so that the construction achieves the correct order of magnitude in the variational problem \cref{eq:var}, namely so that an edge-weighted $r$-graph obtained by planting a mixed hub has $I_p(G) = \Theta(n^k p^\Delta \log(1/p))$; c.f.\ \cref{thm:asympbound}.

Our construction will consist of taking a union of mixed hubs, possibly with different $(t_1, \dots, t_r)$ parameters. We say that a finite collection of mixed hubs is \emph{compatible} if there exists some function $c \colon [0,1] \to \RR_{\ge 0}$ with $c(0) = 1$ such that for each $(t_1, \dots, t_r)$-mixed hub in the collection, the corresponding constants $(c_1, \dots, c_r)$ satisfy $c_i = c(t_i)$ for all $i$ (the same function $c$ is used for all mixed hubs in the collection). Compatibility is necessary to ensure that various mixed hubs can be planted and contribute towards the same labeling of $H$ (defined in Definition \ref{def:gammah1}).

In other words, a compatible collection of mixed hubs is indexed by a finite set $\S$ (called the \emph{indexing set}) along with a function $c \colon [0,1] \to \RR_{\ge 0}$. The elements of the indexing set $\S$ are ordered tuples $(t_1, \dots, t_r)$ each satisfying $t_1, \dots,  t_r \ge 0$ and $t_1 + \cdots + t_r = 1$, and $\S$ is invariant under permutations of coordinates, i.e., $(t_{\sigma(1)}, \dots, t_{\sigma(r)}) \in \S$ whenever $(t_1, \dots, t_r) \in \S$ and $\sigma$ is a permutation of $[r]$. Here the only relevant values of $c(\cdot)$ are $c(t_i)$ for some $t_i$ appearing in as a coordinate in some element of $\S$. We may as well set $c(t) = 0$ unless $t$ appears as a coordinate in some element of $\S$.

For example, in a 3-graph, a $(1,0,0)$-mixed hub is a 1-hub, a $(1/2, 1/2, 0)$-mixed is a 2-hub, and a $(1/3,1/3,1/3)$-mixed hub is a 3-hub. The counterexample given above to \cref{nconj2} involves a $(1/4,1/4,1/2)$-hub.

We define the \emph{volume} of a compatible collection of mixed hubs indexed by $(\S,c)$ to be
\begin{equation}\label{eq:vol} \Vol(\S, c) = \sum_{(t_1, \dots, t_r) \in \S} \prod_{i=1}^r c(t_i). \end{equation}
The number of edges in the union of these mixed hubs is $(1+o(1)) \Vol(\S,c) p^\D n^r / r!$.

\medskip

Let us now compute $t(H, G)$ where $G$ is obtained by planting a compatible collection of mixed hubs indexed by $(\S,c)$. Let $V(G) = [n]$.

Now we describe how to estimate $t(H, G)$ by extending the calculations \cref{eq:clique-calc} and \cref{eq:hub-calc}. We can partition the vertex set $V(G) = [n]$ based on the largest value of $t$ that appears as a coordinate of $\S$ (also allowing $t = 0$) such that the vertex $i \in V(G)$ has $i \le c(t) p^{t \Delta}$, where we are crucially using the assumption of compatibility. And then we partition the set of all copies of $H$ in $G$ based on which part in the partition each vertex of $H$ gets mapped to. We can enumerate the partition induced on the set of copies of $H$ using a function $f \colon V(H) \to [0,1]$, where $f$ takes on values that are either 0 or some number that appears as a coordinate in $\S$. 

Define $E_f$ to be the set of edges $\{i_1, i_2, \dots, i_r\}$ in $E(H)$ such that there is a $(f(i_1), f(i_2), \dots, f(i_r))$-mixed hub in the construction of $G$. The contributions to $t(H, G)$ indexed by $f$ (i.e., corresponding to homomorphisms from $H$ to $G$ where each $v \in V(H)$ is mapped to some $i \in V(G)$ with $i \le c(f(v)) p^{f(v)\Delta}$) is then 
\begin{equation} \label{eq:smallcontrib} 
\sim p^{|E(H)|-|E_f|} \prod_{v \in V(H)} c(f(v)) p^{f(v)\D}, \end{equation} where the first factor comes from the fact that all edges in $E_f$ are mapped to edges in $G$ with weight $1$, while the other edges in $E(H)$ are mapped to edges in $G$ with weight $p$, and the second factor comes from the number of choices for the image of each $v \in V(H)$. We can check that the contribution \cref{eq:smallcontrib} is $o(p^{|E(H)|})$ unless:
\begin{itemize}
\item for every edge $e \in E(H)$, we have $\sum_{v \in e} f(v) = 0$ or $1$, and
\item for all vertices $v \in V(H)$ with $\deg(v) < \D$, we have $f(v) = 0.$
\end{itemize}
Indeed, note that because $t_1 + \cdots + t_r = 1$ for every $(t_1, \dots, t_r) \in S$, we have that \[ |E_f| = \sum_{e \in E_f} \sum_{v \in e} f(v) = \sum_{v \in V(H)} f(v) |\{e \in E_f : v \in e \}| \le \sum_{v \in V(H)} f(v)\D. \] 
We need equality to hold so that \cref{eq:smallcontrib} is on the order of  $p^{|V(H)|}$, and equality above is equivalent to $\sum_{v \in e} f(v) = 1$ for all $e \in E_f$, and that $f(v) = 0$ if $\deg v < \D$.

Let us introduce some notation to make precise the set of vertex-labelings $f$ of $H$ that can come up in \cref{eq:smallcontrib}. We first define a set $\wt\Gamma_H$ below but we will need to further restrict them later.
\begin{definition}[Labelings $\wt\Gamma_H$]
\label{def:gammah1}
Let $H$ be an $r$-graph. Define $\wt\Gamma_H$ to be the set of functions $f:V(H) \to [0, 1]$ satisfying
\begin{itemize}
\item $f(v) = 0$ for all $v \in V(H)$ with $\deg(v) < \Delta$, and
\item $\sum_{v \in e} f(v) \in \{0,1\}$ for each $e \in E(H)$.
\end{itemize}
Define $\wt\Gamma_H(\S)$ to be the set of functions $f \in \wt\Gamma_H$ such that $(f(v))_{v \in e} \in \S \cup \{(0,0,\dots,0)\}$ for all $e \in E(H)$.
\end{definition}

\subsection{Stable labelings}
An issue with the above computation is the set of labelings $\wt\Gamma_H$ may be infinite, so that looking at this class of constructions would not produce a conjectured value of the variational that is given by a finitely parametrizable optimizable problem. For example, consider the set $\wt\Gamma_H$ where $H$ is a regular bipartite graph with bipartition $H = A \cup B.$ Then, for each $t \in [0,1]$, the labeling $f(v) = t$ for all $v \in A$, $f(v) = 1-t$ for all $v \in B$ is in $\wt\Gamma_H$. To limit the set of vertex labelings, we define a set of \emph{stable labelings}.
\begin{definition}[Stable labelings $\Gamma_H$]
\label{def:gammah}
Let $H$ be an $r$-graph. We call $f \in \wt\Gamma_H$ \emph{stable} if there does not exist a different $f' \in \wt\Gamma_H$ such that 
\begin{itemize}
\item $\sum_{v \in e} f(v) = \sum_{v \in e} f'(v)$ for all $e \in E(H)$, and
\item for all distinct $v_1, v_2 \in V(H)$, one has $f(v_1) = f(v_2)$ if and only if $f'(v_1) = f'(v_2)$, and
\item for all $v \in V(H)$, one has $f(v) = 0$ if and only if $f'(v) = 0$.
\end{itemize}
Denote the set of stable labelings by $\Gamma_H$. 
Define $\Gamma_H(\S)$ to be the set of functions $f \in \Gamma_H$ such that $(f(v))_{v \in e} \in \S \cup \{(0,0,\dots,0)\}$ for all $e \in E(H)$.
\end{definition}

In other words, $f \in \wt\Gamma_H$ if it arises as a solution to a system of linear equations given in \cref{def:gammah1} (where we make a choice of $\{0,1\}$ for each $e \in E(H)$). However, this system may have more than one solution, in which case we continue to add constraints of the form $f(v_1) = f(v_2)$ for some pair of distinct $v_1,v_2\in V(H)$ or $f(v) = 0$ for some $v \in V(H)$ until the system is forced to have a unique solution. Since there are a finite number of such systems of linear equations for each $H$, the set $\Gamma_H$ of stable labelings is finite. 
Examples of computation with stable labelings are given below in \cref{sec:labeling-examples}.

The intuition for why stability may be required is that if some construction gives rise to a labeling $f$ that is not stable, then perhaps, due to some convexity-like reasons, one can can perturb the construction so that some labels become equal.

It suffices to restrict to $\S$ that \emph{respects stable labelings}, meaning that every $(t_1, \dots, t_r) \in \S$ appears as the labels of some edge in some $f \in \Gamma_H$. Then there are only finitely many possibilities for $\S$ for each $H$.

Our conjecture, stated informally, is that the asymptotically optimal solutions to the variational problem \cref{eq:var} arise from planting a compatible collection of mixed hubs that respect stable labelings.

A more formal version is stated below. Given a compatible collection of mixed hubs indexed by set $\S$ and function $c \colon [0,1] \to \RR_{\ge 0}$, define
\[ P_H(\S, c) = \sum_{f \in \Gamma_H(\S)} \prod_{v\in V(H)} c(f(v)). \]
We write
\begin{align}
	\label{eq:rho}
	\rho_H(\d) := \inf\{\Vol(\S, c) :\ & \text{compatible collection of mixed hubs that respect stable labelings } \nonumber \\ 
	&\text{indexed by $(\S,c)$ with } P_H(\S, c) \ge 1+\d\}.
\end{align}

\begin{conj}
\label{conj:main}
Fix an $r$-graph $H$ with maximum degree $\D$. For $n \to \infty$ and $p := p(n)$ satisfying $n^{-1/\D} \ll p \ll 1$, we have that \[ \min\{I_p(G_n) : G_n \in \G_n, t(H, G_n) \ge (1+\d)p^{|E(H)|} \} \sim \frac{1}{r!}n^r p^\D \rho_H(\d) \log(1/p). \]
\end{conj}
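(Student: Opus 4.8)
The conjecture asserts $\phi(H,n,p,\d) \sim \tfrac1{r!}\,n^rp^\D\rho_H(\d)\log(1/p)$, where $\phi$ is the variational problem \cref{eq:var}; by \cref{thm:asympbound} the order of $\phi(H,n,p,\d)$ is already $n^rp^\D\log(1/p)$, so what the conjecture adds is the leading constant $\tfrac1{r!}\rho_H(\d)$. I would prove the two matching bounds $\phi(H,n,p,\d) \le (1+o(1))\tfrac1{r!}n^rp^\D\rho_H(\d)\log(1/p)$ and $\phi(H,n,p,\d) \ge (1-o(1))\tfrac1{r!}n^rp^\D\rho_H(\d)\log(1/p)$ separately. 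The first is a construction and follows the lines already indicated above; the second carries all the difficulty and at present can be completed only when $H$ is a clique or the $3$-graph of \cref{fig:graph1} (\cref{thm:varsolutions}), so what follows is a strategy rather than a complete proof.

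For the upper bound, fix $\eps > 0$ and a compatible collection of mixed hubs respecting stable labelings, indexed by $(\S,c)$, with $P_H(\S,c) \ge 1+\d$ and $\Vol(\S,c) \le \rho_H(\d)+\eps$, and let $G$ be obtained by planting it. Two distinct $(t_1,\dots,t_r)$- and $(t_1',\dots,t_r')$-mixed hubs overlap in a box of total width-exponent $\sum_i\max(t_i,t_i') > \sum_i t_i = 1$, so overlaps are of lower order and inclusion--exclusion gives $(1+o(1))\Vol(\S,c)\,p^\D n^r/r!$ planted edges; since $I_p(1) = \log(1/p)$ this gives $I_p(G) = (1+o(1))\tfrac1{r!}\Vol(\S,c)\,n^rp^\D\log(1/p)$. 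Grouping the homomorphisms $H \to G$ by the scale-labeling $f\colon V(H)\to[0,1]$ as in \cref{eq:smallcontrib}, the class of $f$ contributes $\Theta(p^{|E(H)|})$ exactly when $f \in \wt\Gamma_H(\S)$, with leading constant $\prod_{v\in V(H)}c(f(v))$, so $t(H,G) \ge (1+o(1))P_H(\S,c)p^{|E(H)|}$; a small fixed rescaling of the nonzero values of $c(\cdot)$ makes $P_H > 1+\d$ strictly, hence $t(H,G)\ge(1+\d)p^{|E(H)|}$ for large $n$, at the cost of an arbitrarily small factor in $\Vol$. Letting that factor and $\eps$ tend to $0$ gives the upper bound.

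For the lower bound, given an arbitrary edge-weighted $G$ on $[n]$ with $t(H,G) \ge (1+\d)p^{|E(H)|}$, one wants $I_p(G) \ge (1-o(1))\tfrac1{r!}n^rp^\D\rho_H(\d)\log(1/p)$. Extending the graph arguments of \cite{LZ17,BGLZ17}, I would first perform \emph{multiscale thresholding}: partition the edges by weight into a fine net of scales $a_e \asymp p^{t}$, $t\in[0,1]$, observing that an edge of weight $p^t$ costs $(1+o(1))p^t(1-t)\log(1/p)$ in $I_p$, and show that a near-minimizer may be replaced, at negligible cost, by one that is \emph{scale-pure} on structured blocks --- this is where the hypergraph problem genuinely departs from the graph one, because a continuum of intermediate scales $t\in(0,1)$ can be relevant. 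Next, by a maximal/greedy selection one locates, at each relevant scale, prefixes of high weighted degree of sizes $\asymp c_ip^{t_i\D}n$, arranges for them to use a common profile $c(\cdot)$ (compatibility), and concludes that the heavy part of $G$ is a union of compatible mixed hubs up to $o(n^rp^\D)$ edges. One then uses $|E_f| = \sum_v f(v)\,|\{e\in E_f : v\in e\}| \le \D\sum_v f(v)$ together with the hypothesis $t(H,G)\ge(1+\d)p^{|E(H)|}$ to force the surviving homomorphism classes to obey the constraints defining $\wt\Gamma_H$. Finally a \emph{stability reduction} --- showing, by a convexity argument in the logarithmic scale variables, that any configuration whose labeling is not stable can be perturbed (merging two scales, or sliding one parameter toward a neighbor) without increasing $\Vol$ or decreasing $P_H$ --- cuts the optimization down to $\Gamma_H$ and to the finitely many admissible $\S$, identifying the bound with $\rho_H(\d)$; a compactness/limit-object step is then needed to make the discrete optimization rigorous.

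The main obstacle is the middle of this chain: showing that an \emph{arbitrary} near-minimizer --- whose weights may be smeared across a continuum of scales and which carries no symmetry or block structure a priori --- is $o(1)$-close in normalized entropy to a union of finitely many compatible mixed hubs carrying a \emph{stable} labeling. When $H$ is a clique only the scales $t\in\{0,1/r\}$ survive and one is essentially back in the graph-clique analysis of \cite{LZ17}; for the $3$-graph of \cref{fig:graph1} genuinely intermediate scales (such as $t=1/4$) appear, which is precisely what forces the multiscale machinery, and there the structural step is completed by an argument special to that small $H$. A structural argument valid for every $H$ is what is missing, and is the reason \cref{conj:main} is stated only as a conjecture.
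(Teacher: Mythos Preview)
You correctly frame the problem: the statement is a conjecture, the upper bound is the construction side (which the paper does prove, as \cref{lemma:conjupper}), and the lower bound is open in general. Your upper bound argument matches the paper's.

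For the lower bound, your proposed strategy is genuinely different from what the paper carries out in the verified cases (\cref{sec:lowerbound,sec:quad}). You propose to threshold \emph{edges} by weight-scale $a_e\asymp p^t$, argue that a near-minimizer is $o(1)$-close in normalized entropy to a planted union of compatible mixed hubs, and then reduce via a convexity argument to stable labelings. The paper never attempts such a structural extraction. Instead it writes $U=W-p$, expands $t(H,W)=\sum_{H'\subseteq H}p^{|E(H)|-|E(H')|}t(H',U)$, thresholds \emph{vertices} by degree $d(x)=\int U(x,\cdot)$ (or $d_2(x)$) into finitely many sets $B_i$, and bounds each $t(H',U)$ directly via generalized H\"older (\cref{cor:holder-bounded}) on the resulting partition --- then compares to $I_p(G)$ through $I_p(p+x)\gtrsim x^2\log(1/p)$ and a small finite convex optimization at the end. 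Your route, if it worked, would give a stronger structure theorem for near-minimizers; the paper's route sidesteps structure entirely but needs the subgraph expansion and the adaptive choice of thresholds (\cref{lemma:xb1,lemma:xb2}) to succeed, and it is unclear how to extend either approach to general $H$.

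One factual slip: for the $3$-graph of \cref{fig:graph1} the stable labels lie in $\{0,1/3,1/2,1\}$ (\cref{ex:quad}), and the optimum uses only $1$-hubs or $3$-hubs; the label $1/4$ belongs to the separate counterexample graph of \cref{fig:counterexample-H}. The ``multiscale thresholding'' in \cref{sec:quad} refers to vertex-degree thresholds at two widely separated scales (roughly $b_1$ and $pb_2$, i.e.\ order $1$ versus order $p$), needed to isolate the contributions of the subgraphs $E_1,E_2,E_3,H$ in the expansion of $t(H,W)$ --- not to a continuum of edge-weight scales.
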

A routine computation similar to \cref{eq:clique-calc} and \cref{eq:hub-calc} shows that, with $G$ being the edge-weighted $r$-graph obtained by planting the compatible collection of mixed hubs indexed by $(\S, c)$ on top of the constant $p$,
as long as $n^{-1/\D} \ll p \ll 1$,
\[ 
t(H, G) \ge (1-o(1))P_H(\S, c)p^{|E(H)|},
\]
where we have crucially used the assumption of compatibility.
Also,
\[
I_p(G) \sim \frac{1}{r!}n^r p^\D \Vol(\S, c) \log(1/p). 
\] 
This shows the upper bound to \cref{conj:main}.

\begin{lemma}[Upper bound to \cref{conj:main}]
\label{lemma:conjupper}
Fix an $r$-graph $H$ with maximum degree $\D$. For $n \to \infty$ and $p := p(n)$ satisfying $n^{-1/\D} \ll p \ll 1$, we have that \[ \min\{I_p(G_n) : G_n \in \G_n, t(H, G_n) \ge (1+\d)p^{|E(H)|} \} \le (1+o(1))\frac{1}{r!}n^r p^\D \rho_H(\d) \log(1/p). \]
\end{lemma}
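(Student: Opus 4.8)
The plan is to turn the informal construction discussed just above the statement into a rigorous witness: for each $\eta>0$ I would produce an edge-weighted $r$-graph $G_n$ by planting a near-$\rho_H(\d)$-optimal compatible collection of mixed hubs on top of the constant weight $p$, check that $t(H,G_n)\ge(1+\d)p^{|E(H)|}$, bound $I_p(G_n)$, and then send $\eta\to0$. For the choice of construction, I would use \cref{eq:rho}: since, e.g., a suitable $1$-hub already gives a compatible collection respecting stable labelings with $P_H$ as large as one likes, the infimum in \cref{eq:rho} is over a nonempty set and $\rho_H(\d)<\infty$, so I may fix a compatible collection indexed by $(\S,c)$, respecting stable labelings, with $P_H(\S,c)\ge 1+\d$ and $\Vol(\S,c)\le\rho_H(\d)+\eta$. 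Because the $(1-o(1))$ loss incurred in the density computation would otherwise just barely fail to meet the constraint, I would next rescale $c$: keep $c(0)=1$ but multiply every nonzero value of $c$ by $1+\eta$. This keeps the collection compatible and respecting stable labelings (both are properties of $\S$ alone), and since the zero labeling contributes exactly $1$ to $P_H$ while every other $f\in\Gamma_H(\S)$ takes a nonzero value somewhere, it raises $P_H$ to at least $1+(1+\eta)\d$; since every tuple of $\S$ has a nonzero coordinate (they sum to $1$), it multiplies $\Vol$ by at most $(1+\eta)^r$, so the rescaled volume is at most $(1+\eta)^r(\rho_H(\d)+\eta)$. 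Let $G_n$ be obtained by planting this rescaled collection on top of $p$; note each width $c(t)p^{t\D}n$ is $\omega(1)$ and $o(n)$ by the hypothesis $n^{-1/\D}\ll p\ll 1$.

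For the relative entropy, since $I_p(1)=\log(1/p)$ and $I_p(p)=0$ one has exactly $I_p(G_n)=N\log(1/p)$, where $N$ is the number of edges receiving weight $1$, i.e.\ the number of edges in the union of the planted mixed hubs. As recorded after \cref{eq:vol}, $N=(1+o(1))\Vol(\S,c)p^\D n^r/r!$ for the rescaled $c$, the overlaps between distinct mixed hubs and the $r$-tuples with a repeated coordinate being of lower order because each width is $o(n)$. Hence $I_p(G_n)\le(1+o(1))(1+\eta)^r(\rho_H(\d)+\eta)\,\tfrac{1}{r!}\,n^rp^\D\log(1/p)$.

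The substantive computation is the homomorphism density, carried out exactly as in \cref{eq:clique-calc} and \cref{eq:hub-calc} and the discussion around \cref{eq:smallcontrib}. Partition $[n]$ into layers: a vertex $i$ lies in layer $t$ (for $t$ a coordinate value of some tuple in $\S$, or $t=0$) if $t$ is the largest such value with $i$ below the corresponding width. \emph{Compatibility is precisely what makes these layers nested}, so that a vertex in layer $t$ occupies every coordinate slot equal to $t$ of every mixed hub of $\S$; this is where that hypothesis enters. Classify the homomorphisms $H\to G_n$ by the labeling $f\colon V(H)\to[0,1]$ recording the layer of each image vertex; an edge of $H$ lands on weight $1$ exactly when its label tuple lies in $\S$, and on weight $p$ otherwise. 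The contribution of a labeling $f$ is then $\sim p^{|E(H)|-|E_f|}\prod_v c(f(v))p^{f(v)\D}$, which is $o(p^{|E(H)|})$ unless $f\in\wt\Gamma_H$ and moreover every label tuple lies in $\S\cup\{(0,\dots,0)\}$, and which equals $(1-o(1))p^{|E(H)|}\prod_v c(f(v))$ for such $f$. Discarding the nonnegative contributions of all labelings outside $\Gamma_H(\S)$ — harmless since only a lower bound on $t(H,G_n)$ is wanted, and this is exactly what permits restricting to stable labelings — yields
\[
t(H,G_n)\ \ge\ (1-o(1))\,P_H(\S,c)\,p^{|E(H)|}\ \ge\ (1-o(1))\bigl(1+(1+\eta)\d\bigr)p^{|E(H)|}\ >\ (1+\d)\,p^{|E(H)|}
\]
for all $n$ large (depending on $\eta$).

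For each fixed $\eta>0$, $G_n$ is therefore admissible in the minimization for all large $n$, so
\[
\limsup_{n\to\infty}\ \frac{\min\{I_p(G_n):G_n\in\G_n,\ t(H,G_n)\ge(1+\d)p^{|E(H)|}\}}{\tfrac{1}{r!}\,n^rp^\D\log(1/p)}\ \le\ (1+\eta)^r(\rho_H(\d)+\eta),
\]
and letting $\eta\to0$ gives the lemma. I expect the only real work to be the bookkeeping in the density step — verifying that the layers are nested (the one place compatibility is used) and that every labeling outside $\Gamma_H(\S)$ is of lower order — together with the harmless rescaling that turns $(1-o(1))(1+\d)$ into a clean $1+\d$. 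None of this is deep; the genuinely hard direction of \cref{conj:main} is the matching lower bound, which is not needed here.
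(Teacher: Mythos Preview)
Your proposal is correct and follows essentially the same approach as the paper: the paper's proof is precisely the routine computation sketched in the paragraph immediately preceding the lemma (plant a near-optimal compatible collection of mixed hubs on top of $p$, then verify $t(H,G)\ge(1-o(1))P_H(\S,c)p^{|E(H)|}$ and $I_p(G)\sim \tfrac{1}{r!}\Vol(\S,c)n^rp^\D\log(1/p)$). Your write-up simply fleshes out that sketch, and your rescaling of the nonzero values of $c$ by $1+\eta$ is a clean way to convert the $(1-o(1))(1+\d)$ density lower bound into a genuine $(1+\d)$, a technical point the paper leaves implicit.
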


\subsection{Examples} \label{sec:labeling-examples}
We now give a number of examples for \cref{conj:main}.
We write $c_x := c(x)$ to make the formulas more readable.
\begin{example}
\label{ex:cliques}
We start by explaining how the conjecture as stated above applies to the case where $H = K_k^{(r)}.$ A direct calculation shows that for $k > r$, the only labelings $f \in \wt\Gamma_H$ (\cref{def:gammah1}) are
\begin{itemize}
\item $f(v) = 1/r$ for all $v \in V(H)$
\item $f(v) = 1$ for some $v$, and $f(w) = 0$ for all $w \in V(H)$ with $w \neq v.$
\end{itemize}
Therefore, the set of stable labelings $\Gamma_H$ is the same as $\wt\Gamma_H$. Here, we assume that the indexing set $\S$ contains the tuples $(1,0,\cdots,0)$, $(1/r,1/r,\cdots,1/r)$ and their permutations, corresponding to $1$-hubs and $r$-hubs. We can assume this because we can set $c_1 = 0$ or $c_{1/r} = 0$ to handle the other cases.

For a choice of function $c$, we can compute that $P_H(\S, c) = 1 + c_{1/r}^k + k c_1$ and $\Vol(\S, c) = c_{1/r}^r + r c_1$.  Therefore, we get that
\[ \rho_H(\delta) 
= \inf\set{ c_{1/r}^r + r c_1 : c_{1/r}^k + k c_1 \ge \delta, \ c_{1/r}, c_1 \in \RR_{\ge 0}} 
= \min \left\{ \delta^{r/k} , \frac{r \delta}{k}  \right\}, \] which matches the result in \cref{thm:rate}(a).
\end{example}

\begin{example}
\label{ex:2graph}
We now explain how the conjecture applies to 2-graphs (\cref{thm:BGLZ}). 
Let $\S$ be any symmetric combination of mixed hubs compatible with $c$. For simplicity, let $H$ be a connected 2-graph. Let $V(H)$ denote the vertex set of $H$. One can check that the stable labelings $f \in \Gamma_H$ are all of the following forms:
\begin{itemize}
\item $f(v) = 1$ for all $v$ in some independent set $I$ of $H$ such that $\deg(v) = \Delta$ for all $v \in I$ and $f(v) = 0$ for $v \in V(H)\backslash I$, or
\item If $H$ is regular, $f(v) = \frac{1}{2}$ for all $v \in V(H)$.
\end{itemize}
Note that there are many labelings $f \in \wt\Gamma_H$ which are not stable in the case where $H$ is regular with bipartition $V(H) = A \cup B$, namely labelings $f$ with $f(v) = x$ for $v \in A$ and $f(v) = 1-x$ for $v \in B.$

Here, we assume that the indexing set $\S = \{(1,0), (0,1), (1/2,1/2)\}$, corresponding to $1$-hubs and $2$-hubs. We can assume that $\S$ contains all these triples  because we can set $c_1 = 0$ or $c_{1/2} = 0$ to handle the other cases.

For a function $c: [0,1] \to \R_{\ge0}$, if $H$ is irregular, it is easy to check that $P_H(\S, c) = P_{H^\star}(c_1)$, where the latter $P_{H^\star}$ is the independence polynomial as in \cref{eq:H-star-indep}. If $H$ is regular, then we can compute that $P_H(\S, c) = P_{H^\star}(c_1) + c_{1/2}^{|V(H)|}.$ In all cases, we have that $\Vol(\S, c) = 2c_1 + c_{1/2}^2$. It follows that
\[
\rho_H(\d)  =
\begin{cases}
	\min\{2\theta, \delta^{2/|V(H)|}\} & \text{if $H$ is regular,} \\
	2\theta & \text{if $H$ is irregular.}
\end{cases}
\]
This matches the result in \cref{thm:BGLZ}.
\end{example}

\begin{example}
\label{ex:quad}
Let $H$ be the 3-graph from \cref{fig:graph1}, reproduced below.
\begin{center}
\begin{tikzpicture}[scale=.4, every node/.style={draw, circle, black, fill, inner sep = 0pt, minimum width = 3pt},font=\footnotesize]
	\node (1) at (0, 0) {};
	\node (2) at (3, 0) {};
	\node (3) at (0, 3) {};
	\node (4) at (2, 2) {};
	\node (5) at (intersection of 1--2 and 3--4) {};
	\node (6) at (intersection of 1--3 and 2--4) {};
	\draw (2)--(6)--(1)--(5)--(3);
\end{tikzpicture}
\end{center}
Then $\wt\Gamma_H$ (see \cref{def:gammah1}) consists of the following assignments, where the label of a vertex $v$ denotes $f(v).$ We have omitted $0$ labels, and we have denoted the number of automorphisms each labeling has.
\begin{align*} 
\begin{tikzpicture}[baseline=(current bounding box.center),scale=.4, every node/.style={draw, circle, fill=white, inner sep = 1pt, minimum width = 16pt},font=\footnotesize]
	\node (1) at (0, 0) {};
	\node (2) at (3, 0) {};
	\node (3) at (0, 3) {};
	\node (4) at (2, 2) {};
	\node (5) at (intersection of 1--2 and 3--4) {};
	\node (6) at (intersection of 1--3 and 2--4) {};
	\draw (1)--(2)--(5)--(4)--(3)--(1);
	\draw (3)--(6)--(4)--(2);
\end{tikzpicture} &\times 1
&
\begin{tikzpicture}[baseline=(current bounding box.center),scale=.4, every node/.style={draw, circle, fill=white, inner sep = 1pt, minimum width = 16pt},font=\footnotesize]
	\node (1) at (0, 0) {1};
	\node (2) at (3, 0) {};
	\node (3) at (0, 3) {};
	\node (4) at (2, 2) {};
	\node (5) at (intersection of 1--2 and 3--4) {};
	\node (6) at (intersection of 1--3 and 2--4) {};
	\draw (1)--(2)--(5)--(4)--(3)--(1);
	\draw (3)--(6)--(4)--(2);
\end{tikzpicture} &\times 6 
&
\begin{tikzpicture}[baseline=(current bounding box.center),scale=.4, every node/.style={draw, circle, fill=white, inner sep = 1pt, minimum width = 16pt},font=\footnotesize]
	\node (1) at (0, 0) {\Tiny{1/2}};
	\node (2) at (3, 0) {\Tiny{1/2}};
	\node (3) at (0, 3) {};
	\node (4) at (2, 2) {};
	\node (5) at (intersection of 1--2 and 3--4) {};
	\node (6) at (intersection of 1--3 and 2--4) {\Tiny{1/2}};
	\draw (1)--(2)--(5)--(4)--(3)--(1);
	\draw (3)--(6)--(4)--(2);
\end{tikzpicture} &\times 4
\end{align*}
\begin{align*}
\begin{tikzpicture}[baseline=(current bounding box.center),scale=.4, every node/.style={draw, circle, fill=white, inner sep = 1pt, minimum width = 16pt},font=\footnotesize]
	\node (1) at (0, 0) {$x$};
	\node (2) at (3, 0) {$y$};
	\node (3) at (0, 3) {$y$};
	\node (4) at (2, 2) {$x$};
	\node (5) at (intersection of 1--2 and 3--4) {$z$};
	\node (6) at (intersection of 1--3 and 2--4) {$z$};
	\draw (1)--(2)--(5)--(4)--(3)--(1);
	\draw (3)--(6)--(4)--(2);
\end{tikzpicture} &\times 1 \\
\text{ for } x+y+z=1 
\end{align*}
The first three labelings all correspond to stable labelings. For the last set of labels in $\wt\Gamma_H$, note that the ways to make it stable are to
\begin{itemize}
\item $x = y = 0$ and $z = 1$ (and symmetric versions), or
\item $x = 0$, $y = z$ (and symmetric versions), or
\item $x = y = z$.
\end{itemize}
This gives rise to the following stable labelings.
\begin{align*}
\begin{tikzpicture}[baseline=(current bounding box.center),scale=.4, every node/.style={draw, circle, fill=white, inner sep = 1pt, minimum width = 16pt},font=\footnotesize]
	\node (1) at (0, 0) {};
	\node (2) at (3, 0) {};
	\node (3) at (0, 3) {};
	\node (4) at (2, 2) {};
	\node (5) at (intersection of 1--2 and 3--4) {1};
	\node (6) at (intersection of 1--3 and 2--4) {1};
	\draw (1)--(2)--(5)--(4)--(3)--(1);
	\draw (3)--(6)--(4)--(2);
\end{tikzpicture} &\times 3
&
\begin{tikzpicture}[baseline=(current bounding box.center),scale=.4, every node/.style={draw, circle, fill=white, inner sep = 1pt, minimum width = 16pt},font=\footnotesize]
	\node (1) at (0, 0) {};
	\node (2) at (3, 0) {\Tiny{1/2}};
	\node (3) at (0, 3) {\Tiny{1/2}};
	\node (4) at (2, 2) {};
	\node (5) at (intersection of 1--2 and 3--4) {\Tiny{1/2}};
	\node (6) at (intersection of 1--3 and 2--4) {\Tiny{1/2}};
	\draw (1)--(2)--(5)--(4)--(3)--(1);
	\draw (3)--(6)--(4)--(2);
\end{tikzpicture} &\times 3
&
\begin{tikzpicture}[baseline=(current bounding box.center),scale=.4, every node/.style={draw, circle, fill=white, inner sep = 1pt, minimum width = 16pt},font=\footnotesize]
	\node (1) at (0, 0) {\Tiny{1/3}};
	\node (2) at (3, 0) {\Tiny{1/3}};
	\node (3) at (0, 3) {\Tiny{1/3}};
	\node (4) at (2, 2) {\Tiny{1/3}};
	\node (5) at (intersection of 1--2 and 3--4) {\Tiny{1/3}};
	\node (6) at (intersection of 1--3 and 2--4) {\Tiny{1/3}};
	\draw (1)--(2)--(5)--(4)--(3)--(1);
	\draw (3)--(6)--(4)--(2);
\end{tikzpicture} &\times 1
\end{align*}

We can assume that the indexing set $\S$ contains all the tuples $(1,0,0)$, $(1/2,1/2,0)$, $(1/3,1/3,1/3)$ and their permutations, corresponding to $1$-hubs, $2$-hubs, and $3$-hubs, since we can set $c_1 = 0$, $c_{1/2} = 0$, or $c_{1/3} = 0$ to handle the other cases. We also assume that $c$ is zero outside $\{0, 1/3, 1/2, 1\}$.

By the above construction, we have that 
\begin{equation} \label{eq:plant13}
P_H(\S, c) \ge 1 + 6c_1 + 3c_1^2 + 4c_{1/2}^3 + 3c_{1/2}^4 + c_{1/3}^6 
\end{equation}
and
\[ 
\Vol(\S, c) = 3c_1 + 3c_{1/2}^2 + c_{1/3}^3.
\] 
Therefore, we get that 
\begin{align*}
\rho_H(\d) 
&= \inf \{ 3c_1 + 3c_{1/2}^2 + c_{1/3}^3 : 6c_1 + 3c_1^2 + 4c_{1/2}^3 + 3c_{1/2}^4 + c_{1/3}^6 \ge \d,\  c_{1/2}, c_{1/3}, c_1 \in \RR_{\ge 0} \} 
\\&= \min\left( \sqrt{9+3\d}-3, \sqrt{\d} \right),
\end{align*}
achieved by the triples \[ (c_1,c_{1/2},c_{1/3}) = \left(\frac{\sqrt{9+3\d}-3}{3}, 0, 0\right) \enspace \text{ and } \enspace (c_1,c_{1/2},c_{1/3}) = \left(0, 0, \d^\frac16\right) \]
respectively. By \cref{lemma:conjupper}, we have obtained the upper bound in \cref{thm:varsolutions}(b).
\end{example}

\subsection{Proof that \cref{nconj2} is false} \label{sec:counterexample-proof}
In this section we show that \cref{nconj2} is false for the example $3$-graph $H$ given in \cref{fig:counterexample-H}. Note that in our depiction of $H$ in \cref{fig:counterexample-H}, some of the edges only have $2$ vertices. This $3$-graph was chosen so that each vertex that was drawn has degree equal to $\D = 3$ and that there is a unique stable labeling $f: V(H) \to [0, 1]$ satisfying $\sum_{v \in e} f(v) = 1$ for all edges $e$ in the 3-graph. This labeling is shown in \cref{fig:label}. Additionally, this labeling $f$ does not have image in the set $\left\{ 0, \frac13, \frac12, 1 \right\}$, which are the labels corresponding to $k$-hubs for $1 \le k \le 3$.

\begin{figure}
\begin{center}
\begin{tikzpicture}[baseline=(current bounding box.center),scale=1.2, every node/.style={draw, circle, fill=white, inner sep = 1pt, minimum width = 16pt},font=\footnotesize]
	\node (1) at (0, 0) {\Tiny{1/2}};
	\node (2) at (4, 2) {\Tiny{1/2}};
	\node (3) at (2, -2) {\Tiny{1/2}};
	\node (4) at (6, -2) {\Tiny{1/2}};
	\node (5) at (26/7, 2/7) {\Tiny{1/4}};
	\node (6) at (24/7, -10/7) {\Tiny{1/4}};
	\node (7) at (34/7, -6/7) {\Tiny{1/4}};
	\node (8) at (-4, -2) {\Tiny{1/2}};
	\node (9) at (-2, 2) {\Tiny{1/2}};
	\node (10) at (-6, 2) {\Tiny{1/2}};
	\node (11) at (-26/7, -2/7) {\Tiny{1/4}};
	\node (12) at (-24/7, 10/7) {\Tiny{1/4}};
	\node (13) at (-34/7, 6/7) {\Tiny{1/4}};
	\draw (2)--(3)--(4)--(2);
	\draw (8)--(9)--(10)--(8);
	\draw (5)--(1)--(11);
	\draw (6)--(1)--(12);
	\draw (7)--(1)--(13);
	\draw (2)--(5)--(6);
	\draw (3)--(6)--(7);
	\draw (4)--(7)--(5);
	\draw (8)--(11)--(12);
	\draw (9)--(12)--(13);
	\draw (10)--(13)--(11);
\end{tikzpicture}
\end{center}
\caption{The unique labeling $f$ satisfying $\sum_{v\in e} f(v) = 1$ for all edges $e$ in $3$-graph $H$ in \cref{fig:counterexample-H}.}
\label{fig:label}
\end{figure}
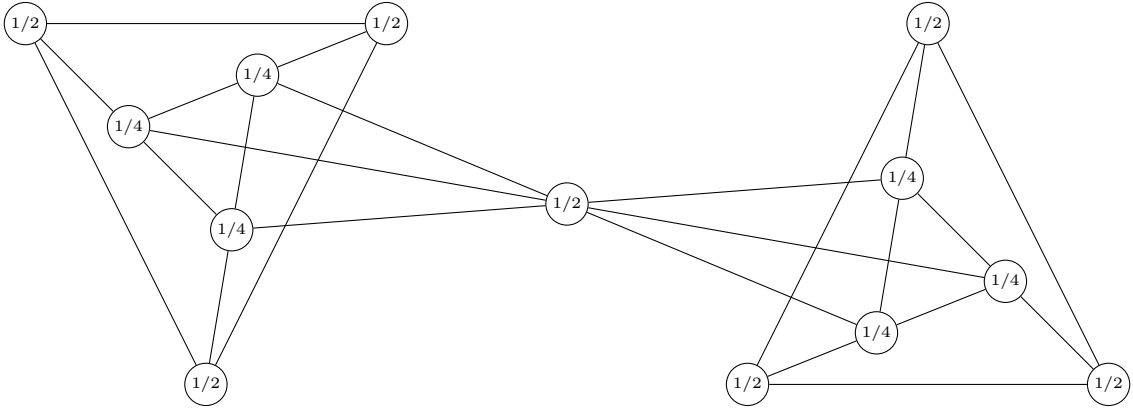

\begin{prop}
\label{prop:counterupper}
Let $H$ be the 3-graph in \cref{fig:counterexample-H}. We have $\rho_H(\delta) \le 6\delta^{1/5}$.
\end{prop}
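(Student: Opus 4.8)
The plan is to read off a single compatible collection of mixed hubs from the stable labeling $f$ depicted in \cref{fig:label} and check that its volume equals $6\delta^{1/5}$ while its associated polynomial $P_H$ is at least $1+\delta$; the bound on $\rho_H(\delta)$ then follows directly from the definition \cref{eq:rho}. Recall that $f$ assigns value $\tfrac12$ to the seven vertices $G, A, B, C, A', B', C'$, value $\tfrac14$ to the six vertices $D, E, F, D', E', F'$, and $0$ to the six dummy vertices; correspondingly each of the six ``triangle'' edges of $H$ carries the label multiset $\{\tfrac12,\tfrac12,0\}$ and each of the other nine edges carries $\{\tfrac12,\tfrac14,\tfrac14\}$. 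This is exactly the construction sketched after \cref{nconj2} --- plant a $2$-hub together with a $(\tfrac12,\tfrac14,\tfrac14)$-mixed hub --- with the widths now pinned down.

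Concretely, I would take the indexing set $\S$ to be all coordinate permutations of $(\tfrac12,\tfrac12,0)$ and of $(\tfrac12,\tfrac14,\tfrac14)$, and the function $c\colon [0,1]\to\RR_{\ge 0}$ given by $c(0)=1$, $c(\tfrac12)=\delta^{1/10}$, $c(\tfrac14)=\delta^{1/20}$, and $c(t)=0$ otherwise. Compatibility holds vacuously (there is a single function $c$), and $\S$ respects stable labelings since both tuple types occur on edges of the stable labeling $f$. Since $\S$ contains three permutations of each tuple type,
\[
	\Vol(\S, c) = 3\,c(\tfrac12)^2 c(0) + 3\,c(\tfrac12)\,c(\tfrac14)^2 = 3\delta^{1/5} + 3\,\delta^{1/10}\delta^{1/10} = 6\delta^{1/5}.
\]
On the other hand $f \in \Gamma_H(\S)$ (it is stable and all of its edge-tuples lie in $\S$), as does the identically-zero labeling, so, keeping only these two terms,
\[
	P_H(\S, c) \ \ge\ 1 + \prod_{v \in V(H)} c(f(v)) \ =\ 1 + c(\tfrac12)^7 c(\tfrac14)^6 \ =\ 1 + \delta^{7/10}\delta^{3/10} \ =\ 1 + \delta.
\]
Substituting into \cref{eq:rho} gives $\rho_H(\delta) \le \Vol(\S, c) = 6\delta^{1/5}$, as claimed.

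There is no real obstacle beyond finding the right construction; the only non-obvious ingredient is the scaling $c(\tfrac12) = c(\tfrac14)^2$, which is dictated by requiring the two groups of summands in $\Vol(\S,c)$ to be of the same order of magnitude (so that neither group dominates), after which solving $c(\tfrac12)^7 c(\tfrac14)^6 = \delta$ produces the exponent $\tfrac15$. One could sharpen the constant, or possibly the exponent, by also counting the remaining stable labelings supported on $\{0,\tfrac14,\tfrac12\}$ (such as the one placing $\tfrac12$ on all of $D,E,F,D',E',F'$), which contribute additional positive monomials to $P_H(\S,c)$; but since only $\rho_H(\delta) \le 6\delta^{1/5}$ is required to refute \cref{nconj2}, discarding those terms keeps the argument short.
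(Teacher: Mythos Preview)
Your proof is correct and essentially identical to the paper's: both use the stable labeling of \cref{fig:label}, take $\S$ to contain the permutations of $(\tfrac12,\tfrac12,0)$ and $(\tfrac12,\tfrac14,\tfrac14)$, set $c(\tfrac12)=\delta^{1/10}$ and $c(\tfrac14)=\delta^{1/20}$, and compute $\Vol(\S,c)=6\delta^{1/5}$ with $P_H(\S,c)\ge 1+\delta$ from the single term $c(\tfrac12)^7 c(\tfrac14)^6$. The paper also lists $(1,0,0)$ in $\S$, but since $c_1$ is never set (hence effectively zero) this is immaterial; your omission of it is if anything cleaner.
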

\begin{proof}
Consider the labeling $f \in \Gamma_H$ in \cref{fig:label}. Let $\S$ consist of the $3$-tuples $(1,0,0)$, $(1/2,1/2,0)$, $(1/4,1/4,1/2)$ and their permutations. The triple $(1/4,1/4,1/2)$ does not correspond to any $k$-hub. Set $c_0 = 1, c_{1/2} = \delta^{1/10}$, and $c_{1/4} = \delta^{1/20}$. Let $f$ be the labeling in \cref{fig:label}. This gives us that for \[ P_H(\S, c) \ge 1 + \prod_{v\in V(H)}c_{f(v)} \ge 1+c_{1/2}^7c_{1/4}^6 = 1+\d. \] Also, we have that \[ \Vol(\S, c) = 3c_{1/2}^2 + 3c_{1/2}c_{1/4}^2 = 6\delta^{1/5}. \]
Thus $\rho_H(\delta) \le 6\delta^{1/5}$ as claimed.
\end{proof}
We now show that compatible collections of $1$-hubs, $2$-hubs, and $3$-hubs cannot achieve the bound in \cref{prop:counterupper}.

\begin{prop}
\label{prop:counterlower}
For the $3$-graph $H$ in \cref{fig:counterexample-H} and sufficiently large constant $\delta$, we have that for every compatible collection of $k$-hubs indexed by $\S$ and $c$ with $P_H(\S, c) \ge 1+\d$, we have that $\Vol(\S, c) = \Omega(\d^{3/14})$.
\end{prop}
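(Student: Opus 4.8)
The plan is to show that for every nonzero stable labeling $f$ of $H$ compatible with $k$-hubs one has $\sum_{v\in V(H)}f(v)\le 14/3$, and then deduce the proposition by an elementary estimate. First I would note that we may assume $\S$ is the full collection of $k$-hub tuples, i.e.\ all permutations of $(1,0,0)$, $(1/2,1/2,0)$, $(1/3,1/3,1/3)$: any $k$-hub tuple, or any labeling, that uses a coordinate value not occurring in $\S$ contributes $0$ to the relevant sum (the corresponding value of $c$ is $0$), so enlarging $\S$ to this full collection changes neither $\Vol(\S,c)$ nor $P_H(\S,c)$. For this $\S$ we have $\Vol(\S,c) = 3c_1 + 3c_{1/2}^2 + c_{1/3}^3$ (as in \cref{ex:quad}, with $c_x := c(x)$), hence $c_1\le\Vol(\S,c)$, $c_{1/2}\le\Vol(\S,c)^{1/2}$, $c_{1/3}\le\Vol(\S,c)^{1/3}$. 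Since $\prod_{v\in V(H)}c(f(v)) = c_1^{a}c_{1/2}^{b}c_{1/3}^{d}$, where $a,b,d$ count the vertices of $H$ labeled $1,\tfrac12,\tfrac13$ by $f$, this gives $\prod_v c(f(v)) \le \Vol(\S,c)^{\,a+b/2+d/3} = \Vol(\S,c)^{\sum_{v}f(v)}$. Because $\Gamma_H(\S)$ is finite (say of size $N = N(H)$, as recalled before \cref{def:gammah}), $1+\delta \le P_H(\S,c) \le 1 + N\max_{f}\Vol(\S,c)^{\sum_v f(v)}$, the maximum over nonzero $f$; given the claimed bound $\sum_v f(v)\le 14/3$ and using that $\delta > N$ forces $\Vol(\S,c)>1$ (otherwise every term is $<1$ and $P_H(\S,c)\le 1+N$), we conclude $\Vol(\S,c)\ge(\delta/N)^{3/14} = \Omega(\delta^{3/14})$.

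It remains to prove $\sum_v f(v)\le 14/3$. Here I would exploit that every non-dummy vertex of $H$ has degree exactly $\D = 3$. If $f(v)>0$ then each of the three edges through $v$ has $f$-sum at least $f(v)>0$, hence equal to $1$ (it lies in $\{0,1\}$ by definition of $\wt\Gamma_H$). Counting incidences between the set $A:=\{e\in E(H):\sum_{u\in e}f(u) = 1\}$ of ``active'' edges and the vertices they contain, $\sum_{e\in A}\sum_{u\in e}f(u) = |A|$, while the left side equals $\sum_{u}f(u)\,|\{e\in A: u\in e\}| = 3\sum_u f(u)$, so $\sum_u f(u) = |A|/3 \le |E(H)|/3 = 5$. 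Thus it suffices to improve $5$ to $14/3$, i.e.\ to show $|A|\le 14$: a $k$-hub-compatible labeling cannot have \emph{every} edge active.

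Suppose, for contradiction, that every edge of $H$ is active. Then the three pair-edges spanning the triangle $\{A,B,C\}$ in \cref{fig:counterexample-H} give $f(A)+f(B) = f(B)+f(C) = f(C)+f(A) = 1$, so $f(A)=f(B)=f(C)=\tfrac12$. Consider the triple-edge $(A,D,F)$: its label tuple $(\tfrac12,f(D),f(F))$ is not $(0,0,0)$, so it is a permutation of a $k$-hub tuple, and $(\tfrac12,\tfrac12,0)$ is the only $k$-hub tuple containing the value $\tfrac12$; hence exactly one of $f(D),f(F)$ equals $\tfrac12$ and the other $0$. Doing the same for $(B,D,E)$ and $(C,E,F)$ and writing $g(X) = 1$ if $f(X)=\tfrac12$ and $g(X) = 0$ otherwise ($X\in\{D,E,F\}$), we obtain $g(D)+g(F) = g(D)+g(E) = g(E)+g(F) = 1$; summing these yields $2(g(D)+g(E)+g(F)) = 3$, which has no integer solution. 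This contradiction gives $|A|\le 14$, hence $\sum_v f(v)\le 14/3$, and the proposition follows.

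The crux is the last paragraph: one must use that compatibility with $k$-hubs is genuinely stronger than membership in $\wt\Gamma_H$ and combine it with the ``octahedron face'' gadget on $\{A,B,C,D,E,F\}$ to force the parity obstruction — precisely the structural feature baked into $H$ by the fact (\cref{fig:label}) that its unique all-active stable labeling takes the value $\tfrac14\notin\{0,\tfrac13,\tfrac12,1\}$. The remaining ingredients (the reduction to the full $\S$, the bound $\prod_v c(f(v))\le\Vol(\S,c)^{\sum_v f(v)}$, and the incidence count) are routine.
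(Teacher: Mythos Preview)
Your proof is correct and follows the same overall strategy as the paper: reduce to the full $\S$, use the incidence identity $\sum_v f(v)=|A|/3$ for the set $A$ of active edges, show $|A|\le 14$, and then deduce $\Vol(\S,c)\gtrsim\delta^{3/14}$. Your final deduction, via the inequality $\prod_v c(f(v))\le \Vol(\S,c)^{\sum_v f(v)}$, is just a repackaging of the paper's observation that $c_t\gtrsim\delta^{3t/14}$ must hold for some $t\in\{1/3,1/2,1\}$.

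The one genuine difference is in how you rule out $|A|=15$. The paper simply invokes the design property of $H$: the system $\sum_{v\in e}f(v)=1$ (all $e$) has a unique solution, namely the labeling in \cref{fig:label}, which uses the value $1/4\notin\{0,1/3,1/2,1\}$ and is therefore not in $\Gamma_H(\S)$. You instead give a self-contained parity argument using only the six edges on $\{A,B,C,D,E,F\}$: after forcing $f(A)=f(B)=f(C)=1/2$ from the pair-edges, the $k$-hub constraint on the face edges $(A,D,F),(B,D,E),(C,E,F)$ yields three equations $g(D)+g(F)=g(D)+g(E)=g(E)+g(F)=1$ with no $\{0,1\}$-solution. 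This is a nice shortcut: it avoids solving the full $13$-variable linear system and shows directly that the obstruction already lives in one ``octahedral face'' gadget. The paper's route, on the other hand, makes the conceptual point more transparent---$H$ was built precisely so that the unique all-active labeling requires a non-$k$-hub value---and generalizes immediately to any $H$ with that property.
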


\begin{proof}
We can assume that the indexing set $\S$ contains all the tuples $(1,0,0)$, $(1/2,1/2,0)$, \\ $(1/3,1/3,1/3)$ and their permutations, corresponding to $1$-hubs, $2$-hubs, and $3$-hubs, since we can handle the other cases by setting some of $c_1,c_{1/2},c_{1/3}$ to equal zero.

Since $P_H(\S, c) \ge 1 + \delta$, there exists some nonzero $f \in \Gamma_H(\S)$ such that $\prod_{v \in V(H)} c_{f(v)} \ge \delta/|\Gamma_H(\S)|$. Every $f \in \Gamma_H(\S)$ has 
\begin{equation} \label{eq:143}
 \sum_{v \in V(H)} f(v) = \frac13 \sum_e \left(\sum_{v \in e} f(v) \right) \le \frac{14}{3},
\end{equation}
since $\sum_{v \in e} f(v) \in \{0,1\}$ for every $e \in E(H)$, but if $\sum_{v \in e} f(v) = 1$ for all $15$ edges $e$ in $H$, then $f$ would have to be the labeling in \cref{fig:label}, and hence not in $\Gamma_H(\S)$. 

Combining \cref{eq:143} with $\prod_{v \in V(H)} c_{f(v)} \gtrsim \delta$ from earlier, we deduce that $c_t \gtrsim \delta^{3t/14}$ for at least one of $t \in \{1/3, 1/2, 1\}$. Thus $\Vol(\S, c) = 3(c_1 + c_{1/2}^2 + c_{1/3}^3) \gtrsim \delta^{3/14}$.
\end{proof}

\section{Solution to the variational problem for cliques}
\label{sec:lowerbound}
To show \cref{thm:varsolutions}(a), we must give a construction to upper bound $\phi(H,n,p,\d)$ and prove a lower bound. The upper bound follows directly from the calculation in \cref{sec:future}, specifically \cref{ex:cliques}. In this section we show the lower bound.

\subsection{Preliminaries}

We recall some tools from \cite{LZ17}. The following inequality can be viewed as a generalized version of H\"older's inequality~\cite{Fin92}. It is also related to the Brascamp--Lieb inequalities~\cite{BL76}.

\begin{theorem}[Generalized H\"older's inequality]
\label{thm:holder}
For each $i \in [n]$, let $\Omega_i$ be a probability space with measure $\mu_i$. Let $\mu = \prod_{i=1}^n \mu_i$. Let $A_1, A_2, \dots, A_n$ be nonempty subsets of $[n] = \{1, 2, \dots, n\}$ and for $A \in [n]$ let $\mu_A = \prod_{i \in A} \mu_i$ and $\Omega_A = \prod_{i \in A} \Omega_i.$ Let $f_i \in L^{p_i}(\Omega_{A_i}, \mu_{A_i})$ for each $1 \le i \le m$. Assume further that $\sum_{i : j \in A_i} p_i^{-1} \le 1$ for all $1 \le j \le n.$ Then we have that
\begin{equation*}
\int \left( \prod_{i=1}^m f_i \right) \,\mathrm d\mu \le \prod_{i = 1}^m \left( \int |f_i|^{p_i} \,\mathrm d \mu_{A_i} \right)^\frac{1}{p_i}.
\end{equation*}
\end{theorem}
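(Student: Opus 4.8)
The plan is to prove the inequality by induction on $n$, the number of coordinate spaces, peeling off one coordinate at a time so that each peeling step reduces to the classical H\"older inequality for finitely many functions on a \emph{single} probability space.

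\emph{Base case $n=1$.} Every $A_i$ is the singleton $\{1\}$ and the hypothesis reads $\sum_{i=1}^m p_i^{-1}\le 1$. If the sum is strictly below $1$, I would insert a dummy function $f_0\equiv 1$ on $\Omega_1$ with exponent $p_0$ defined by $p_0^{-1}=1-\sum_i p_i^{-1}\ge 0$; this is harmless since $\|1\|_{L^{p_0}(\mu_1)}=1$ because $\mu_1$ is a probability measure. Now $\sum_{i=0}^m p_i^{-1}=1$ and the claim is exactly the classical multi-function H\"older inequality on $(\Omega_1,\mu_1)$.

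\emph{Inductive step.} Assuming the statement for $n-1$ spaces, I would write $\mu=\mu_{[n-1]}\otimes\mu_n$, set $B:=\{i:n\in A_i\}$, and use Tonelli (everything in sight is nonnegative once we replace $f_i$ by $|f_i|$) to get
\[
\int\prod_{i=1}^m|f_i|\,\mathrm d\mu=\int_{\Omega_{[n-1]}}\Big(\prod_{i\notin B}|f_i|\Big)\Big(\int_{\Omega_n}\prod_{i\in B}|f_i|\,\mathrm d\mu_n\Big)\,\mathrm d\mu_{[n-1]}.
\]
For each fixed value of the remaining coordinates, the inner integral is over the single probability space $(\Omega_n,\mu_n)$, and since $\sum_{i\in B}p_i^{-1}\le 1$ (this is precisely the hypothesis for $j=n$), the base case gives $\int_{\Omega_n}\prod_{i\in B}|f_i|\,\mathrm d\mu_n\le\prod_{i\in B}g_i$, where $g_i:=\big(\int_{\Omega_n}|f_i|^{p_i}\,\mathrm d\mu_n\big)^{1/p_i}$ is a (Tonelli-measurable) function on $\Omega_{A_i\setminus\{n\}}$. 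Substituting this in, it remains to apply the inductive hypothesis over $\Omega_1,\dots,\Omega_{n-1}$ to the functions $|f_i|$ ($i\notin B$, living on $\Omega_{A_i}$ with $A_i\subseteq[n-1]$) together with $g_i$ ($i\in B$, living on $\Omega_{A_i\setminus\{n\}}\subseteq[n-1]$), carrying over the \emph{same} exponents $p_i$. The compatibility condition is inherited verbatim: for each $j\in[n-1]$, deleting $n$ from an $A_i$ does not change whether $j\in A_i$, so the relevant sum of $p_i^{-1}$ is still $\le 1$. The inductive hypothesis then bounds the expression by $\prod_{i\notin B}\big(\int|f_i|^{p_i}\,\mathrm d\mu_{A_i}\big)^{1/p_i}\prod_{i\in B}\big(\int g_i^{p_i}\,\mathrm d\mu_{A_i\setminus\{n\}}\big)^{1/p_i}$, and since $\int g_i^{p_i}\,\mathrm d\mu_{A_i\setminus\{n\}}=\int|f_i|^{p_i}\,\mathrm d\mu_{A_i}$ (Tonelli again), this is exactly the desired right-hand side, closing the induction.

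The only bookkeeping I would still need to dispatch is routine: the edge case where some $A_i=\{n\}$, in which $g_i$ is simply the constant $\|f_i\|_{L^{p_i}(\mu_n)}$ and factors straight out of the $(n-1)$-dimensional step (or one trivially extends the inductive base); measurability of the partial integrals $g_i$, which is automatic from Tonelli since all integrands are nonnegative; and the insertion of unit-norm dummy functions whenever a H\"older exponent sum is strictly less than $1$. I expect there to be no genuine analytic difficulty; the one point that truly has to be checked with care is that the exponents $p_i$ can be carried through the induction unchanged --- which is exactly where the probability-measure hypothesis on each $\Omega_i$ is essential --- and, relatedly, that $\sum_{i:\,j\in A_i}p_i^{-1}\le 1$ is preserved when coordinate $n$ is peeled off.
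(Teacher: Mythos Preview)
The paper does not give its own proof of this theorem; it is stated as a known tool, with a citation to Finner~\cite{Fin92} (and a remark that it is related to the Brascamp--Lieb inequalities). Your inductive argument is correct and is essentially the standard proof of Finner's inequality: peel off one coordinate, apply classical H\"older in that coordinate using the hypothesis $\sum_{i:n\in A_i}p_i^{-1}\le 1$, and then invoke the inductive hypothesis on the remaining $n-1$ coordinates with the same exponents, which works precisely because each $\mu_j$ is a probability measure so that the exponent condition is inherited unchanged.
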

Our most common application involves the case where every element of $[n]$ is contained in at most $\Delta$ sets $A_j$. Then one can take
$p_i = \Delta$ for all $i \in [m]$, giving the inequality $\int f_1 \dots f_m \mathrm d\mu \le \prod_{i=1}^m \left(\int |f_i|^\D \mathrm d\mu_{A_i}\right)^\frac1\D.$ As an example use, we have for any function $f$ that \[ \int f(x, y)f(x, z)f(y, z)\,\dx\dy\dz  \le \left(\int f(x, y)^2 \,\dx\dy\right)^{3/2}. \]
More generally, we will apply \cref{thm:holder} in the following form.

\begin{corollary}[Generalized H\"older for bounded degree hypergraphs]
\label{cor:holder-bounded}
Let $H$ be a hypergraph with maximum degree at most $\Delta$. Let $\Omega$ be some probability space and $B_v \subset \Omega$ a measurable subset for each $v \in V(H)$. Let $U \colon \Omega^r \to \RR_{\ge 0}$ be a symmetric function. Then
\[
\int_{\prod_{v \in V(H)} B_v} \prod_{S \in E(H)} U(x_S) \, \dx_{V(H)}
 \le \prod_{S \in E(H)} \paren{ \int_{\prod_{v \in S} B_v} U(x_S)^{\Delta} \, \dx_S}^{1/\Delta}.
\]
\end{corollary}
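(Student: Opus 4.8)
The plan is to derive this directly from the Generalized H\"older inequality (\cref{thm:holder}), taking the role of the ground set $[n]$ there to be the vertex set $V(H)$ and the role of the index set $[m]$ to be the edge set $E(H)$; the only real work is to absorb the restriction to the product region $\prod_{v\in V(H)} B_v$ into the functions to which H\"older is applied. Concretely, for each edge $S \in E(H)$ I would set $f_S \colon \Omega^S \to \RR_{\ge 0}$ by $f_S(x_S) := U(x_S)\prod_{v \in S} \mathbf 1_{B_v}(x_v)$, and take $A_S := S$ and exponent $p_S := \D$ for every $S$. Since $U \ge 0$ and the $\mathbf 1_{B_v}$ are $\{0,1\}$-valued, one has $\prod_{S \in E(H)} f_S(x_S) = \paren{\prod_{S \in E(H)} U(x_S)}\prod_{v \in V(H)} \mathbf 1_{B_v}(x_v)^{\deg(v)}$, and because $\mathbf 1_{B_v}^{m} = \mathbf 1_{B_v}$ for any integer $m \ge 1$ this equals exactly the integrand $\prod_{v \in V(H)}\mathbf 1_{B_v}(x_v) \prod_{S \in E(H)} U(x_S)$ appearing on the left-hand side. (Isolated vertices of $H$, if any, can be discarded at the outset, since a probability measure assigns $\int \mathbf 1_{B_v}\,\dx_v = \mu(B_v) \le 1$ and so dropping such a factor only increases the left side.) Thus $\int_{\prod_v B_v} \prod_S U(x_S)\,\dx_{V(H)} = \int \prod_S f_S\,\dx_{V(H)}$, where throughout $\int \cdots \dx_S$ denotes integration against the probability measure on $\Omega^S$, so that \cref{thm:holder} applies verbatim.

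Next I verify the hypothesis of \cref{thm:holder}: for each vertex $v \in V(H)$ we have $\sum_{S \colon v \in S} p_S^{-1} = \deg(v)/\D \le 1$ by the maximum-degree bound on $H$. Applying the theorem then gives $\int \prod_S f_S\,\dx_{V(H)} \le \prod_{S \in E(H)} \paren{\int |f_S|^{\D}\,\dx_S}^{1/\D}$. Finally $|f_S|^{\D} = U(x_S)^{\D} \prod_{v \in S}\mathbf 1_{B_v}(x_v)^{\D} = U(x_S)^{\D}\prod_{v \in S}\mathbf 1_{B_v}(x_v)$, so $\int |f_S|^{\D}\,\dx_S = \int_{\prod_{v \in S} B_v} U(x_S)^{\D}\,\dx_S$, which yields the claimed inequality. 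If some $f_S \notin L^{\D}$ the right-hand side is $+\infty$ and there is nothing to prove, so we may assume each of these integrals is finite, matching the integrability requirement in \cref{thm:holder}.

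I expect essentially no obstacle here; the one point that deserves a line of care is precisely that H\"older forces us to raise the indicator functions $\mathbf 1_{B_v}$ to the $\D$-th power, and one must observe that this does not enlarge anything (indicators are idempotent), so that the right-hand side is genuinely the $L^{\D}$-average of $U$ over the box $\prod_{v \in S} B_v$ and not over the larger set $\Omega^S$. Everything else is bookkeeping: the distribution of the indicators across edges (each vertex's indicator appearing once per incident edge, which is harmless by idempotence), the trivial treatment of isolated vertices, and the reading of the integral notation as a probability average.
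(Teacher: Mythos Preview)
Your proposal is correct and follows exactly the approach implicit in the paper: the paper does not spell out a proof of this corollary but immediately precedes it with the remark that when every element is contained in at most $\Delta$ of the sets $A_j$ one may take all exponents $p_i = \Delta$ in \cref{thm:holder}. Your absorption of the domain restriction via the indicator functions $\mathbf 1_{B_v}$ and the observation that indicators are idempotent is precisely the (unstated) detail needed to pass from that remark to the corollary as written.
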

As a further special case, we have the following inequality when $H$ is a clique. Here, for a set $S$, we write $\binom{S}{r}$ for the collection of $r$-element subsets of $S$.

\begin{corollary}[Generalized H\"older for cliques]
\label{cor:holder-clique}
Let $k \ge r$ be positive integers, Let $B$ be a measurable subset of some probability space $\Omega$. Let $U \colon \Omega^r \to \RR_{\ge 0}$ be a symmetric function. Then
\[
\int_{B^k} \prod_{S \in \binom{[k]}{r} } U(x_S) \, \dx_{[k]} 
\le \paren{ \int_{B^r} U(x_{[r]})^{\binom{k-1}{r-1}} \, \dx_{[r]} }^{k/r}
\]
\end{corollary}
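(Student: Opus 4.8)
The plan is to obtain \cref{cor:holder-clique} as a direct specialization of \cref{cor:holder-bounded}. I would take $H = K_k^{(r)}$, the complete $r$-graph on vertex set $[k]$, so that $E(H) = \binom{[k]}{r}$. Each vertex $v \in [k]$ lies in exactly $\binom{k-1}{r-1}$ edges (choose the other $r-1$ vertices of the edge among the remaining $k-1$), so $H$ is regular of degree $\Delta = \binom{k-1}{r-1}$. Applying \cref{cor:holder-bounded} with this $H$, the same probability space $\Omega$, the given symmetric $U$, and $B_v = B$ for every $v \in V(H)$ then yields
\[
\int_{B^k} \prod_{S \in \binom{[k]}{r}} U(x_S) \, \dx_{[k]} \le \prod_{S \in \binom{[k]}{r}} \paren{ \int_{B^r} U(x_S)^{\binom{k-1}{r-1}} \, \dx_S }^{1/\binom{k-1}{r-1}}.
\]

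Next I would collapse the right-hand side using the symmetry of $U$: for any $S \in \binom{[k]}{r}$, relabeling the $r$ integration variables shows $\int_{B^r} U(x_S)^{\binom{k-1}{r-1}} \, \dx_S = \int_{B^r} U(x_{[r]})^{\binom{k-1}{r-1}} \, \dx_{[r]}$, a quantity independent of $S$. Hence the product over the $\binom{k}{r}$ edges equals this common value raised to the power $\binom{k}{r} / \binom{k-1}{r-1}$. The absorption identity $\binom{k}{r} = \tfrac{k}{r}\binom{k-1}{r-1}$ gives $\binom{k}{r}/\binom{k-1}{r-1} = k/r$, which is precisely the exponent in the claimed inequality.

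There is essentially no genuine obstacle here: the statement is a bookkeeping consequence of \cref{cor:holder-bounded} (which in turn is \cref{thm:holder} with all exponents $p_i = \Delta$), and the only things one needs to check are that $K_k^{(r)}$ is regular of degree $\binom{k-1}{r-1}$ and the elementary binomial identity above. The one point worth a moment's care is the nonnegativity hypothesis $U \ge 0$, which ensures the integrand is well defined and lets \cref{cor:holder-bounded} be applied as stated.
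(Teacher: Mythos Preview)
Your proposal is correct and matches the paper's approach: the paper states \cref{cor:holder-clique} immediately after \cref{cor:holder-bounded} with the remark ``As a further special case, we have the following inequality when $H$ is a clique,'' without writing out a separate proof. Your specialization $H = K_k^{(r)}$, $B_v = B$, together with the regularity $\Delta = \binom{k-1}{r-1}$ and the identity $\binom{k}{r}/\binom{k-1}{r-1} = k/r$, is exactly the intended derivation.
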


\noindent Let $H = K_k^{(r)}$, let $G$ be an $r$-graph, and let $W$ be the weights corresponding to $G$ (with all weights in $[0,1]$) satisfying $t(H, W) \ge (1+\delta)p^{|E(H)|}.$
Define $U = W - p$. As $I_p$ is convex, decreasing on $0$ to $p$, and increasing on $p$ to $1$, we can assume that $0 \le U \le 1-p$, as we are trying to lower bound $I_p(W)$ over all $W$ with $t(H,W) \ge (1+\delta)p^{|E(H)|}.$ We can expand 
\begin{equation}
\label{eq:expand}
t(H, W) = \sum_{H' \subseteq H} p^{|E(H)| - |E(H')|} t(H', U)
\end{equation}
where the sum is over all subgraphs $H'$ of $H$. \\

\noindent For $b \in (0, 1]$, define the subset $B_b$ of vertices $x \in V(G)$ by
\[ B_b = B_b(G) := \{ x : d^G_2(x) > b \} \]
where \[ d_2(x) = d^G_2(x) := \int U(x, x_1, \dots, x_{r-1})^2 \dx_{[r-1]}. \]
From now on, we will drop the dependence on $G$ in $B_b(G)$ and $d^G_2(x)$ whenever $G$ and the corresponding functions $U$ and $W$ are clear from context.

\begin{remark}
In earlier work (e.g. \cite{LZ17,BGLZ17}), $d(x)$ was defined as $d(x) = \int U(x, y) \dy$ (representing the degree of $x$). While our proof actually relies on our different choice of $d_2$, it still seems natural because in our equality cases $U$ only takes on values $o(1)$ and $1 - o(1)$, so the two choices actually should agree.
\end{remark}
The next lemma is analogous to Lemma 4.2 in \cite{BGLZ17}.
\begin{lemma}
\label{lemma:graphonlemma}

Fix $H = K_k^{(r)}$ with $k > r \ge 3.$ For a graph $G$, let $U : V(G)^r \to \R$ be a function satisfying
\begin{equation} \label{eq:ipbound} \int I_p(p + U(x_{[r]})) \dx_{[r]} \lesssim p^{\binom{k-1}{r-1}}\log(1/p). \end{equation}
Then
\begin{equation}
\label{eq:l1bound1}
\int U(x_{[r]}) \dx_{[r]} \lesssim p^\frac{\binom{k-1}{r-1}+1}{2} \sqrt{\log(1/p)}
\end{equation}
and
\begin{equation}
\label{eq:46}
\int d_2(x) \dx = \int U(x_{[r]})^2 \dx_{[r]} \lesssim p^{\binom{k-1}{r-1}}.
\end{equation}
Furthermore, if $B_b = \{ x : d_2(x) > b \}$, then
\begin{equation}
\label{eq:47}
\lambda(B_b) \lesssim \frac{p^{\binom{k-1}{r-1}}}{b}
\end{equation}
where we write $\lambda(S) = \int_S 1 \dx_{[r]}$ for the measure of $S$.
\end{lemma}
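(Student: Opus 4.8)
The plan is to extract all three conclusions directly from pointwise lower bounds on the rate function $I_p$ combined with the hypothesis \cref{eq:ipbound}. Throughout I would assume $0 \le U \le 1-p$, exactly as in the discussion preceding the lemma, and abbreviate $D := \binom{k-1}{r-1}$; note that $D = \Delta(K_k^{(r)})$ and $D \ge 3$ since $k > r \ge 3$. Starting from $I_p(p) = I_p'(p) = 0$ and $I_p''(t) = \frac{1}{t(1-t)} \ge \frac1t$, the Taylor identity $I_p(x) = \int_p^x (x-t)\,I_p''(t)\,dt$ gives, for all $0 \le u \le 1-p$,
\begin{equation*}
I_p(p+u) \ \ge\ \frac{u^2}{2(p+u)} \qquad\text{and}\qquad I_p(p+u) \ \ge\ u\log(u/p) - u .
\end{equation*}
The first inequality uses $I_p''(t) \ge 1/t \ge 1/(p+u)$ on $[p,p+u]$; the second uses $I_p''(t)\ge 1/t$ together with $(p+u)\log(1+u/p)\ge u\log(u/p)$. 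Reading off special cases, for $p$ small: $I_p(p+u) \gtrsim u^2/p$ if $u \le p$; $\ I_p(p+u) \gtrsim u$ if $u \ge p$; and $I_p(p+u) \gtrsim u\log(1/p)$ if $u \ge \sqrt p$ (here $\log(u/p) \ge \frac12\log(1/p)$ absorbs the $-u$).

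To prove \cref{eq:46}, I would split the range of $x_{[r]}$ at the threshold $U = \sqrt p$. On $\{U \le \sqrt p\}$ the first estimate gives $I_p(p+U) \ge U^2/(4\sqrt p)$, hence $\int_{\{U\le\sqrt p\}} U^2 \le 4\sqrt p\int I_p(p+U) \lesssim \sqrt p\, p^D\log(1/p) \ll p^D$. On $\{U > \sqrt p\}$, since $0 \le U \le 1$ we have $U^2 \le U$ and $I_p(p+U) \gtrsim U\log(1/p) \ge U^2\log(1/p)$, hence $\int_{\{U>\sqrt p\}}U^2 \lesssim \frac{1}{\log(1/p)}\int I_p(p+U) \lesssim p^D$. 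Adding the two pieces proves $\int U^2 \lesssim p^D$; the identity $\int d_2(x)\,\dx = \int U(x_{[r]})^2\,\dx_{[r]}$ is Fubini plus symmetry of $U$. Then \cref{eq:47} follows immediately from Markov's inequality: $\lambda(B_b)$, the measure of $\{x : d_2(x) > b\}$, is at most $\frac1b\int d_2(x)\,\dx = \frac1b\int U^2 \lesssim p^D/b$.

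For \cref{eq:l1bound1}, split at $U = p$. On $\{U \le p\}$ the first estimate improves to $I_p(p+U)\ge U^2/(4p)$, so $\int_{\{U\le p\}}U^2 \lesssim p^{D+1}\log(1/p)$, and Cauchy--Schwarz against the total mass $\lambda(V(G)^r)=1$ gives $\int_{\{U\le p\}}U \le \bigl(\int_{\{U\le p\}}U^2\bigr)^{1/2} \lesssim p^{(D+1)/2}\sqrt{\log(1/p)}$. On $\{U > p\}$ we have $I_p(p+U)\gtrsim U$, so $\int_{\{U>p\}}U \lesssim p^D\log(1/p)$, which, since $D \ge 3$, is smaller than $p^{(D+1)/2}\sqrt{\log(1/p)}$ by the factor $p^{(D-1)/2}\sqrt{\log(1/p)} \to 0$. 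Summing the two parts yields the claimed bound.

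The one genuine obstacle is proving \cref{eq:46} with the sharp power $p^D$ rather than $p^D\log(1/p)$: the universally valid estimate $I_p(p+u)\gtrsim u^2$ alone only yields $\int U^2 \lesssim p^D\log(1/p)$. The fix above exploits that the ratio $I_p(p+u)/u^2$ is $\gtrsim 1/p$ for $u\lesssim p$ and is $\gtrsim \log(1/p)$ for $u\gtrsim\sqrt p$ (via the $u\log(u/p)$ bound), so placing the cutoff at $\sqrt p$ leaves one piece comfortably below $p^D$ while the other only absorbs a single $\log(1/p)$ that cancels the one in \cref{eq:ipbound}. Everything else is bookkeeping; one should just keep track that $D\ge 3$, so that the leftover contributions ($\sqrt p\,p^D\log(1/p)$ in \cref{eq:46}, and $p^D\log(1/p)$ in \cref{eq:l1bound1}) are genuinely of lower order.
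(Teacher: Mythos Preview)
Your proof is correct, but the paper's argument is shorter and avoids the case splits. For \cref{eq:46}, the paper invokes the uniform lower bound $I_p(p+u) \gtrsim u^2 \log(1/p)$ (valid for all $0 \le u \le 1-p$ once $p$ is small; this is \cref{lemma:x2}), which immediately gives $\int U^2 \lesssim \frac{1}{\log(1/p)} \int I_p(p+U) \lesssim p^D$ in one line. Your remark that ``the universally valid estimate $I_p(p+u)\gtrsim u^2$ alone only yields $p^D\log(1/p)$'' is true as stated, but misses that the stronger universal bound $I_p(p+u)\gtrsim u^2\log(1/p)$ also holds---essentially because $I_p''(t) \ge 1/(t(1-t))$ and the minimum of $I_p(p+u)/u^2$ over $[0,1-p]$ occurs at the right endpoint, where it is $\sim \log(1/p)$. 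So the split at $\sqrt p$ is unnecessary. For \cref{eq:l1bound1}, the paper instead applies Jensen to the convex function $I_p$ to get $I_p\bigl(p+\int U\bigr) \le \int I_p(p+U) \lesssim p^D\log(1/p)$, and then reads off $\int U \lesssim p^{(D+1)/2}\sqrt{\log(1/p)}$ from monotonicity of $I_p$ on $[p,1]$ together with the asymptotic $I_p(p+s) \sim s^2/(2p)$ for $s$ at this scale. Your splitting-plus-Cauchy--Schwarz approach is a valid alternative and has the virtue of being entirely self-contained from the Taylor bounds you derived, at the cost of a bit more bookkeeping.
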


Before giving the proof, we state some properties of the function $I_p(x)$ which were shown in \cite{LZ17}.
\begin{lemma}[{\cite[Lemma 3.3]{LZ17}}]
\label{lemma:approxip}
If $0 \le x \ll p$ then $I_p(p+x) \sim \frac{x^2}{2p}.$ If $p \ll x \le 1-p$ then $I_p(p+x) \sim x \log(x/p).$
\end{lemma}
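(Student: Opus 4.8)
The statement to prove is \cref{lemma:approxip}: the asymptotics $I_p(p+x) \sim \frac{x^2}{2p}$ when $0 \le x \ll p$, and $I_p(p+x) \sim x\log(x/p)$ when $p \ll x \le 1-p$, where $I_p(x) = x\log\frac{x}{p} + (1-x)\log\frac{1-x}{1-p}$. The plan is a direct asymptotic expansion of $I_p(p+x)$ in each of the two regimes, being careful that the error terms are genuinely lower order uniformly as $p \to 0$.

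\textbf{First regime: $0 \le x \ll p$.} First I would write $I_p(p+x) = (p+x)\log\paren{1 + \frac{x}{p}} + (1-p-x)\log\paren{1 - \frac{x}{1-p}}$. Since $x \ll p \le 1$, both $\frac{x}{p} \to 0$ and $\frac{x}{1-p} \to 0$, so I can use $\log(1+u) = u - \frac{u^2}{2} + O(u^3)$. Expanding the first term gives $(p+x)\paren{\frac{x}{p} - \frac{x^2}{2p^2} + O(x^3/p^3)} = x + \frac{x^2}{p} - \frac{x^2}{2p} + O(x^3/p^2) = x + \frac{x^2}{2p} + O(x^3/p^2)$. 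Expanding the second term gives $(1-p-x)\paren{-\frac{x}{1-p} - \frac{x^2}{2(1-p)^2} + O(x^3)} = -x - \frac{x^2}{2(1-p)} + O(x^2 \cdot \tfrac{x}{1-p}) = -x + O(x^2)$, using $0 < p < 1$ bounded away from $1$ (or keeping the $1-p$ explicit; either way this term is $O(x^2)$). Summing, $I_p(p+x) = \frac{x^2}{2p} + O(x^2) + O(x^3/p^2)$. Both error terms are $o(x^2/p)$: $O(x^2) = o(x^2/p)$ since $p \to 0$, and $O(x^3/p^2) = \frac{x^2}{p}\cdot O(x/p) = o(x^2/p)$ since $x \ll p$. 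Hence $I_p(p+x) \sim \frac{x^2}{2p}$.

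\textbf{Second regime: $p \ll x \le 1-p$.} Here I want to show the dominant term is $x\log(x/p)$. Write again $I_p(p+x) = (p+x)\log\frac{p+x}{p} + (1-p-x)\log\frac{1-p-x}{1-p}$. For the first term, $(p+x)\log\frac{p+x}{p} = (p+x)\paren{\log\frac{x}{p} + \log\paren{1 + \tfrac{p}{x}}} = x\log\frac{x}{p} + p\log\frac{x}{p} + (p+x)\log(1+\tfrac px)$. Since $p/x \to 0$, $\log(1+p/x) = O(p/x)$, so $(p+x)\log(1+p/x) = O(p)$. Also $p\log\frac{x}{p} \le p\log(1/p) = o(x\log\frac{x}{p})$ — this needs a short check: $x\log(x/p) \ge$ something comparable; since $p \ll x$, $\log(x/p)\to\infty$ and $x\log(x/p) \gg p\log(x/p) \ge p\log(1/p) - p\log(1/x)$, and one should verify $p\log(1/p) = o(x\log(x/p))$ using $p = o(x)$ (if $x$ is also small, $\log(x/p) = \log(1/p) - \log(1/x)$, and since $p = o(x)$ one gets $\log(x/p)$ of the same order as... actually one must be slightly careful when $x$ itself is not $\to 0$, but then $x\log(x/p)\gs\log(1/p)\gg p\log(1/p)$ trivially). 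The second term is bounded: $|1-p-x| \le 1$ and $\log\frac{1-p-x}{1-p} \le 0$, with $|\log\frac{1-p-x}{1-p}| = |\log(1 - \frac{x}{1-p})|$; when $x$ is bounded away from $1-p$ this is $O(x)$, and $x = o(x\log(x/p))$; the boundary case $x \to 1-p$ needs the crude bound that $(1-p-x)\log\frac{1-p-x}{1-p} = O(1)$ (since $u\log u \to 0$ as $u\to 0^+$), which is again $o(x\log(x/p))$ as the main term blows up. Collecting, $I_p(p+x) = x\log\frac xp + o\paren{x\log\frac xp}$.

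\textbf{Main obstacle.} The calculations themselves are routine Taylor expansions; the only real care needed is in the second regime, tracking that the subleading terms ($p\log(x/p)$, the $(1-p-x)\log\frac{1-p-x}{1-p}$ piece, and the correction $(p+x)\log(1+p/x)$) are all $o(x\log(x/p))$ \emph{uniformly}, handling separately the sub-case where $x$ stays away from $1-p$ and the sub-case where $x$ approaches $1-p$ (equivalently $1-p-x \to 0$). I would present this as a short lemma with the two regimes handled in sequence, citing \cite[Lemma 3.3]{LZ17} as the source but including the computation for completeness.
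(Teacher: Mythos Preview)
The paper does not include a proof of this lemma; it is simply quoted from \cite[Lemma~3.3]{LZ17}. Your direct Taylor-expansion argument is correct and is the standard way to verify the statement.

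Two small points. In the first regime you have a sign slip: the quadratic contribution from the second summand is $+\frac{x^2}{2(1-p)}$, not $-\frac{x^2}{2(1-p)}$; this is harmless since you immediately absorb it into $O(x^2)$. In the second regime you can streamline: the bound $p\log(x/p)=o(x\log(x/p))$ is immediate from $p/x\to 0$ (no need to route through $p\log(1/p)$), and the term $(1-p-x)\log\frac{1-p-x}{1-p}$ is uniformly $O(1)$ on the whole range $0\le x\le 1-p$ (since $u\log u$ is bounded on $[0,1]$ and $\log(1-p)=O(p)$), which together with $\log(x/p)\to\infty$ handles both of your sub-cases at once.
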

\begin{lemma}[{\cite[Lemma 3.4]{LZ17}}]
\label{lemma:Iplower}
There exists $p_0 > 0$ so that for all $0 < p \le p_0$ and $0 \le x \le b \le 1 - p -1/\log(1/p)$,
\[ I_p(p + x) \ge (x/b)^2 I_p(p + b). \]
\end{lemma}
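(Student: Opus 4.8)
The plan is to reduce the inequality to the monotonicity of a single‑variable function and then carry out one asymptotic estimate at the right endpoint. The case $x=0$ is trivial (both sides vanish when $b>0$; if $b=0$ there is nothing to prove), so assume $0<x\le b\le 1-p-1/\log(1/p)$. Since $I_p(p+t)\ge 0$ for $t\in[0,1-p]$, the desired inequality $I_p(p+x)\ge (x/b)^2 I_p(p+b)$ is equivalent to $g(x)\ge g(b)$, where $g(t):=I_p(p+t)/t^2$. Hence it suffices to show that $g$ is non‑increasing on $(0,\,1-p-1/\log(1/p)]$. Using $I_p'(y)=\log\frac{y(1-p)}{p(1-y)}$, $I_p''(y)=\frac1{y(1-y)}$, and $I_p'''(y)=\frac{2y-1}{y^2(1-y)^2}$, a direct computation gives $g'(t)=\psi(t)/t^3$ with $\psi(t):=t\,I_p'(p+t)-2I_p(p+t)$, so the task becomes to prove $\psi\le 0$ on $(0,\,1-p-1/\log(1/p)]$.

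Next I would determine the shape of $\psi$ on $(0,1-p)$. One has $\psi(0)=0$ (since $I_p'(p)=I_p(p)=0$), $\psi'(t)=t\,I_p''(p+t)-I_p'(p+t)=:\chi(t)$ with $\chi(0)=0$, and $\chi'(t)=t\,I_p'''(p+t)$, whose sign on $(0,1-p)$ is that of $2(p+t)-1$. Thus $\chi$ is strictly decreasing on $(0,1/2-p]$ and strictly increasing on $[1/2-p,\,1-p)$; since $\chi(0)=0$ and $\chi(t)\to+\infty$ as $t\to(1-p)^-$ (because $t\,I_p''(p+t)\sim\frac{1-p}{1-p-t}$ dominates $I_p'(p+t)=O(\log\frac1{1-p-t})$), the function $\chi$ has a unique zero $t^\ast\in(1/2-p,\,1-p)$, is negative on $(0,t^\ast)$, and positive on $(t^\ast,1-p)$. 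Consequently $\psi$ is strictly decreasing on $(0,t^\ast]$ and strictly increasing on $[t^\ast,1-p)$. Combined with $\psi(0)=0$ this yields the dichotomy I will use: for any $y\in(0,1-p)$, one has $\psi\le 0$ on all of $(0,y]$ if and only if $\psi(y)\le 0$ — if $y\le t^\ast$ this is automatic, and if $y>t^\ast$ then $\psi<0$ on $(0,t^\ast]$ while $\psi(t)\le\psi(y)$ on $[t^\ast,y]$ by monotonicity.

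It then remains to check $\psi(b_0)\le 0$ at the single point $b_0:=1-p-1/\log(1/p)$, for all sufficiently small $p$ (if $b_0\le0$ the lemma is vacuous, so take $p_0$ small enough that $b_0\in(0,1-p)$ for $p\le p_0$). Writing $\varepsilon:=1/\log(1/p)$, so that $p+b_0=1-\varepsilon$ and $\varepsilon\log(1/p)=1$, I would use only crude bounds: $b_0\,I_p'(1-\varepsilon)\le I_p'(1-\varepsilon)\le\log\frac1{p\varepsilon}=\log(1/p)+\log\log(1/p)$, and, expanding $I_p(1-\varepsilon)=(1-\varepsilon)\log\frac{1-\varepsilon}{p}+\varepsilon\log\frac{\varepsilon}{1-p}$ and using $\varepsilon\log(1/p)=1$ together with the fact that all remaining terms are $o(1)$, the bound $2I_p(1-\varepsilon)\ge 2\log(1/p)-O(1)-o(1)$. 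These give $\psi(b_0)\le -\log(1/p)+\log\log(1/p)+O(1)\to-\infty$ as $p\to0^+$, so $\psi(b_0)<0$ once $p\le p_0$ for a suitable $p_0>0$. By the dichotomy of the previous paragraph this forces $\psi\le 0$ on $(0,b_0]$, hence $g$ is non‑increasing there, which is exactly what was needed.

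The only substantive point is the endpoint estimate in the last paragraph: this is precisely where the cutoff $1-p-1/\log(1/p)$ matters — it is (up to lower order) the largest $b$ for which $\psi(b)\le0$, since $\psi$ blows up to $+\infty$ as $t\to(1-p)^-$. The rest of the argument, the sign analysis of $\chi$ and $\psi$, is soft and does not use the cutoff. A minor care point is the treatment of the domain endpoints: the singularity of $g$ and of $\psi/t^3$ at $t=0$ is removable (indeed $\psi(t)=o(t^2)$ there), and one should note the blow‑up of $\chi$ and $\psi$ as $t\to1-p$. One could alternatively record $\psi(t)=\int_0^t s(t-s)\,I_p'''(p+s)\,ds$, which makes transparent that $\psi\le 0$ automatically for $t\le 1/2-p$ and isolates the possible positive contribution to $t\in(1/2-p,t]$; but the derivative/monotonicity bookkeeping above is cleaner to write out.
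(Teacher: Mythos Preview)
Your proof is correct. The reduction to showing $g(t)=I_p(p+t)/t^2$ is non-increasing, the sign analysis of $\psi$ via $\chi'(t)=t\,I_p'''(p+t)$, the resulting ``single-zero'' dichotomy for $\psi$, and the endpoint estimate $\psi(b_0)\le -\log(1/p)+\log\log(1/p)+O(1)\to-\infty$ all go through as written. The only places to be slightly careful are the implicit assumption $p<1/2$ (so that $1/2-p>0$ in the sign analysis of $\chi'$) and the removable singularity at $t=0$, both of which you address.

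Note, however, that the paper itself does not prove this lemma: it is simply quoted from \cite{LZ17} (their Lemma~3.4). So there is no ``paper's proof'' to compare against here; you have supplied a self-contained argument where the present paper gives only a citation.
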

\begin{lemma}[{\cite[Corollary 3.5]{LZ17}}]
\label{lemma:x2}
There is a constant $p_0 > 0$ such that for all $0 < p \le p_0$ we have that
\begin{equation*}
I_p(p+x) \ge x^2 I_p(1-1/\log(1/p)) \sim x^2 \log(1/p).
\end{equation*}
\end{lemma}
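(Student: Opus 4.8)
The plan is to deduce this directly from \cref{lemma:Iplower} and \cref{lemma:approxip}; no genuinely new argument is needed. I would set $b := 1 - p - 1/\log(1/p)$, so that $p + b = 1 - 1/\log(1/p)$, and shrink $p_0$ (below the threshold appearing in \cref{lemma:Iplower}) so that $0 < b < 1$ and $b$ lies in the admissible range there. The claimed bound $I_p(p+x) \ge x^2 I_p(1-1/\log(1/p))$ is needed for $x$ in the range $[0, 1-p]$ relevant to the application (where one has $0 \le U \le 1-p$), and I would split this range at $x = b$.

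For $0 \le x \le b$, apply \cref{lemma:Iplower} with this choice of $b$ to get $I_p(p+x) \ge (x/b)^2 I_p(p+b)$; since $b < 1$ we have $(x/b)^2 \ge x^2$, which gives the claim after recalling $p+b = 1-1/\log(1/p)$. For $b < x \le 1-p$, \cref{lemma:Iplower} no longer applies, but $I_p$ is increasing on $[p,1]$, so $I_p(p+x) \ge I_p(p+b) = I_p(1-1/\log(1/p))$, and since $x \le 1-p < 1$ forces $x^2 < 1$, the claimed inequality again follows. This boundary regime --- $x$ within $1/\log(1/p)$ of $1-p$ --- is the only place where one must depart from \cref{lemma:Iplower}, and the monotonicity of $I_p$ disposes of it; this is the single point requiring any care.

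It remains to check $I_p(1-1/\log(1/p)) \sim \log(1/p)$, which also yields the asserted $x^2 I_p(1-1/\log(1/p)) \sim x^2\log(1/p)$. I would apply the second case of \cref{lemma:approxip} to $x = b = 1-p-1/\log(1/p)$, which satisfies $p \ll x \le 1-p$ as $p \to 0$: this gives $I_p(p+b) \sim b\log(b/p)$, and since $b \to 1$ and $\log(b/p) = \log b + \log(1/p) \sim \log(1/p)$, we conclude $I_p(1-1/\log(1/p)) \sim \log(1/p)$. (Alternatively, plug $y = 1-1/\log(1/p)$ into $I_p(y) = y\log(y/p) + (1-y)\log\tfrac{1-y}{1-p}$: the first term is $(1-o(1))\log(1/p)$ and the second is $\tfrac{1}{\log(1/p)}\log\tfrac{1/\log(1/p)}{1-p} = O\big(\tfrac{\log\log(1/p)}{\log(1/p)}\big) = o(\log(1/p))$.) Overall there is essentially no hard step: the statement is a routine corollary of the two preceding lemmas, with the only subtlety being the near-boundary values of $x$.
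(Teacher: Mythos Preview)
Your proposal is correct. The paper does not give its own proof of this lemma, citing it instead as \cite[Corollary 3.5]{LZ17}; your derivation from \cref{lemma:Iplower} (taking $b = 1-p-1/\log(1/p)$) together with \cref{lemma:approxip} for the asymptotic is exactly the intended route, and your handling of the boundary range $b < x \le 1-p$ via monotonicity of $I_p$ is the appropriate patch.
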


\begin{proof}[Proof of \cref{lemma:graphonlemma}]
By \cref{lemma:approxip} we have 
\begin{equation*}
I_p\left(p + cp^\frac{\binom{k-1}{r-1}+1}{2} \sqrt{\log(1/p)} \right) \sim \frac{1}{2}c^2p^{\binom{k-1}{r-1}} \log(1/p)
\end{equation*}
Now, by the convexity of $I_p(\cdot)$ and \cref{eq:ipbound}, we find that
\begin{equation*}
I_p\left(p + \int U(x_{[r]}) \dx_{[r]} \right) \le \int I_p(p + U(x_{[r]})) \dx_{[r]} \lesssim p^{\binom{k-1}{r-1}} \log(1/p)
\end{equation*}
Now applying the monotonicity of $I_p(x)$ for $x \ge p$ gives us that
\begin{equation*}
\int U(x_{[r]}) \dx_{[r]} \lesssim p^\frac{\binom{k-1}{r-1}+1}{2} \sqrt{\log(1/p)}
\end{equation*}
as desired.
By \cref{lemma:x2} we have that
\begin{equation*}
\int U(x_{[r]})^2 \dx_{[r]} \lesssim \frac{1}{\log(1/p)} \int I_p(p + U(x_{[r]})) \dx_{[r]} \lesssim p^{\binom{k-1}{r-1}}
\end{equation*}
as desired.
Finally it is clear from the definition of $B_b$ that
\[ \lambda(B_b) b \le \int U(x_{[r]})^2 \dx_{[r]} \lesssim p^{\binom{k-1}{r-1}},\]
so
\[ \lambda(B_b) \lesssim \frac{p^{\binom{k-1}{r-1}}}{b}. \qedhere
 \] 
\end{proof}

\subsection{Lower bound for the variational problem}
In this section, we prove the lower bound to \cref{thm:varsolutions}(a). We assume \cref{eq:ipbound}, i.e., $\int I_p(p+U(x_{[r]})) \dx_{[r]} \lesssim p^{\binom{k-1}{r-1}}\log(1/p)$, or else we are done.
Our first step towards showing \cref{thm:rate}(a) is eliminating the negligible terms in \cref{eq:expand}.
\begin{lemma}[Negligible terms]
\label{lemma:neg}
Fix $H = K_k^{(r)}$ for $k > r \ge 3$. Let $U$ be a function satisfying \cref{eq:ipbound}. Let $S_k^{(r)}$ (a star) denote the $k$-vertex $r$-graph that consists of all the edges that contain some fixed vertex. If $H'$ is a subgraph of $H$ such that $H' \ncong H$ and $H' \ncong S_k^{(r)}$, then $t(H', U) \ll p^{|e(H')|}.$
\end{lemma}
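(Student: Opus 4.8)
The plan is to bound $t(H',U)$ directly, using the generalized Hölder inequalities (\cref{cor:holder-bounded} and \cref{cor:holder-clique}) together with the moment and level-set estimates for $U$ recorded in \cref{lemma:graphonlemma}, peeling off one vertex at a time. Write $\Delta_0:=\binom{k-1}{r-1}$ for the common degree of $H=K_k^{(r)}$, so $\Delta_0\ge r\ge 3$; the inputs I will use are $\int U(x_{[r]})^2\,\dx_{[r]}\ls p^{\Delta_0}$, the bound $0\le U\le 1$, and $\lambda(B_b)\ls p^{\Delta_0}/b$. Deleting the isolated vertices of $H'$ changes neither $t(H',U)$ nor $|E(H')|$, so I may assume $H'$ has no isolated vertices and $E(H')\neq\emptyset$.

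The combinatorial fact driving the whole estimate is that such an $H'$ with $H'\ncong K_k^{(r)}$ and $H'\ncong S_k^{(r)}$ satisfies the strict inequality $\Delta_0\,\tau(H')>|E(H')|$, where $\tau(H')$ is the minimum vertex cover number. Indeed $\Delta_0\,\tau(H')\ge|E(H')|$ always, since each of the $\tau(H')$ cover vertices meets at most $\Delta_0$ edges and together they meet every edge; equality forces every cover vertex to have full degree $\Delta_0$ with the covered edge-sets pairwise disjoint, but when $k>r$ any two full-degree vertices share an edge, so $\tau(H')\le 1$ at equality and hence $H'=S_k^{(r)}$. Informally, a ``hub-type'' planting supported on $\tau(H')$ special vertices has $H'$-density only $\approx p^{\Delta_0\tau(H')}\ll p^{|E(H')|}$, so it cannot be competitive; the work is to convert this $\Delta_0\tau(H')>|E(H')|$ slack into an honest power saving valid for all feasible $U$.

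I would then split on $\Delta(H')$. If $\Delta(H')\le\Delta_0-1$, then \cref{cor:holder-bounded} with exponent $\Delta_0-1\ge\max(2,\Delta(H'))$ and the bound $U^{\Delta_0-1}\le U^2$ give $t(H',U)\ls\prod_{S\in E(H')}\bigl(\int U(x_S)^2\,\dx_S\bigr)^{1/(\Delta_0-1)}\ls p^{\Delta_0|E(H')|/(\Delta_0-1)}=p^{|E(H')|}\cdot p^{|E(H')|/(\Delta_0-1)}\ll p^{|E(H')|}$, the surplus exponent being bounded away from $0$. This already settles every $H'$ spanning fewer than $k$ vertices (such $H'$ has $\Delta(H')\le\binom{k-2}{r-1}<\Delta_0$), so I may assume $V(H')=[k]$. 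In the remaining case $\Delta(H')=\Delta_0$, the graph $H'$ has a full-degree vertex $v$ whose link in $H'$ is all of $K_{k-1}^{(r-1)}$; hence $S_k^{(r)}\subsetneq H'\subsetneq K_k^{(r)}$ and $H'$ has at least one missing edge $e^*$, necessarily avoiding $v$.

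For a vertex $u$, writing $\mathcal L_u$ for the link of $u$ in $H'$ and $H'_{\bar u}$ for the edges of $H'$ avoiding $u$, I peel $u$:
\[
t(H',U)=\int\Bigl(\int\prod_{S'\in E(\mathcal L_u)}U(x_u,x_{S'})\prod_{S\in E(H'_{\bar u})}U(x_S)\,\dx_{[k]\setminus u}\Bigr)\dx_u .
\]
I bound the inner integral by a generalized Hölder inequality on $[k]\setminus u$, with one exponent for the link edges and another for the edges of $H'_{\bar u}$: via \cref{cor:holder-clique} (or a direct application of \cref{thm:holder}) the link factors collapse to a power $d_2(x_u)^{\gamma}$ (with $\gamma>1$ when $u$ has full degree), while the $H'_{\bar u}$ factors become a power of $p$ coming from $\int U^2\ls p^{\Delta_0}$; then I integrate $x_u$ over $[k]$ using $\int d_2(x_u)^{\gamma}\,\dx_u\le\int d_2\ls p^{\Delta_0}$ when $\gamma\ge 1$, and otherwise split over $\{x_u\in B_b\}$ and its complement, using $d_2(x_u)\le b$ off $B_b$ and $\lambda(B_b)\ls p^{\Delta_0}/b$ with $b$ a small power of $p$. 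Choosing $u$ with $\deg_{H'}(u)<\Delta_0$ (so $u$ lies in a missing edge) and $\deg_{H'}(u)>|E(H')|\cdot\tfrac{r-1}{k-1}$ — such a $u$ exists by a short degree count unless $H'$ has very many missing edges, in which case one instead peels a full-degree vertex and induces downward on $|E(H')|$ (equivalently on $k$) — the bookkeeping produces an exponent of $p$ strictly larger than $|E(H')|$, the gain matching exactly the slack in $\Delta_0\tau(H')>|E(H')|$. The hard part is precisely this case: at the critical configurations (e.g.\ $H'=K_k^{(r)}\setminus\{e^*\}$, whose fractional matching number is exactly $|E(H')|/\Delta_0$) the pure generalized Hölder bound only yields $t(H',U)\ls p^{|E(H')|}$, not $\ll p^{|E(H')|}$, so the $d_2$ and $B_b$ estimates of \cref{lemma:graphonlemma} are genuinely needed, and balancing the Hölder exponents against the threshold $b$ forces the choice of which vertex to peel — hence the shape of the induction — to depend on the combinatorics of $H'$.
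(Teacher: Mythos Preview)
Your handling of the case $\Delta(H')\le\Delta_0-1$ is correct and is exactly what the paper does in that case. The difficulty is entirely in the case $\Delta(H')=\Delta_0$, and there your argument is a sketch rather than a proof: the H\"older exponents in the mixed peeling are never specified and it is not checked that they satisfy the constraints of \cref{thm:holder}; the existence of a vertex $u$ with $\deg_{H'}(u)<\Delta_0$ and $\deg_{H'}(u)>|E(H')|(r-1)/(k-1)$ is asserted ``by a short degree count'' but not verified; and the fallback induction on $k$ is mentioned but never set up. More to the point, your claim that ``the $d_2$ and $B_b$ estimates of \cref{lemma:graphonlemma} are genuinely needed'' is false for this lemma.

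The paper sidesteps all of this with a purely combinatorial step. It shows (\cref{lemma:graphtheory2} and \cref{lemma:graphtheory}) that any nonempty subgraph $H'$ of $K_k^{(r)}$ other than $K_k^{(r)}$ and $S_k^{(r)}$ contains a further subgraph $H''$ with
\[
\frac{|E(H'')|}{\max(2,\Delta(H''))}>\frac{|E(H')|}{\Delta_0},
\]
obtained by deleting $\lceil t/r\rceil$ well-chosen edges, where $t$ is the number of full-degree vertices of $H'$. Since $0\le U\le 1$ gives $t(H',U)\le t(H'',U)$, a single application of \cref{cor:holder-bounded} to $H''$ with exponent $\max(2,\Delta(H''))$ then yields
\[
t(H',U)\le\Bigl(\int U(x_{[r]})^{\max(2,\Delta(H''))}\,\dx_{[r]}\Bigr)^{|E(H'')|/\max(2,\Delta(H''))}\ll\Bigl(\int U(x_{[r]})^{2}\,\dx_{[r]}\Bigr)^{|E(H')|/\Delta_0}\ls p^{|E(H')|},
\]
the strict $\ll$ coming from the strict exponent inequality together with $\int U^2\ls p^{\Delta_0}\ll 1$. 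No level sets, no peeling, no induction; only the $L^2$ bound \cref{eq:46} is used. Your intuition that the naive H\"older bound applied to $H'$ itself has no slack at the critical configurations is correct; the point you missed is that the slack can be manufactured by \emph{dropping} edges from $H'$ (passing to $H''$ with smaller maximum degree) rather than by invoking the finer analytic information about $U$.
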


We will prove \cref{lemma:neg} after the following hypergraph theoretic lemma.

\begin{lemma}
\label{lemma:graphtheory2}
Let $G$ be a nonempty $r$-graph on $k$ vertices other than $K_k^{(r)}$ and $S_k^{(r)}$ for $k > r \ge 2$. Then there exists a subgraph $G'$ of $G$ such that
\[ |E(G')| > \frac{\binom{k-1}{r-1} - 1}{\binom{k-1}{r-1}}|E(G)| \text{ and } \Delta(G') \le \binom{k-1}{r-1} - 1. \]
\end{lemma}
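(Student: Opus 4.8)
The plan is to isolate the ``bad'' vertices---those of maximum possible degree---and show that all of them can be destroyed by deleting a small fraction of the edges. Write $D := \binom{k-1}{r-1}$, the largest degree a vertex of a $k$-vertex $r$-graph can have, and call a vertex $v$ \emph{full} if $\deg_G(v) = D$; equivalently, every $r$-subset of $[k]$ containing $v$ is an edge of $G$. Let $F$ be the set of full vertices, $f := |F|$, and $m := |E(G)|$. If $M \subseteq E(G)$ is a set of edges with $\bigcup_{e \in M} e \supseteq F$, then $G' := G - M$ satisfies $\Delta(G') \le D-1$ (every full vertex loses an edge, every other vertex already had degree $\le D-1$). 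Hence it suffices to produce such an $M$ with $|M| < m/D$; the minimum size of such an $M$ is the edge cover number $\nu_G(F)$, so the goal is to show $\nu_G(F) < m/D$.

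Two reductions come from the hypotheses $G \neq K_k^{(r)}, S_k^{(r)}$. First, if $f = 0$ we may take $G' = G$ (then $|E(G')| = m > \tfrac{D-1}{D}m$ since $G$ is nonempty), so assume $f \ge 1$. Every $r$-set meeting $F$ is an edge of $G$, so $m \ge \binom{k}{r} - \binom{k-f}{r}$; if $f \ge k-r+1$ then $\binom{k-f}{r} = 0$, forcing $m = \binom{k}{r}$ and hence $G = K_k^{(r)}$, a contradiction. Thus $f \le k-r$. Second, if $1 \le f \le r$, pick any $r$-set $e$ with $F \subseteq e$; since $e$ contains a full vertex, $e \in E(G)$, and deleting $e$ alone covers $F$, so $\nu_G(F) = 1$. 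As $G$ has a full vertex, $m \ge D$, with equality only if every edge contains that vertex, i.e.\ $G = S_k^{(r)}$; this is excluded, so $m \ge D+1 > D$ and we are done.

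It remains to treat $r < f \le k-r$ (vacuous unless $k > 2r$). Partitioning $F$ into $\ceil{f/r}$ blocks of size at most $r$ and extending each block to an $r$-set (an edge, as it contains a full vertex) shows $\nu_G(F) \le \ceil{f/r}$. By the hockey-stick identity, $m \ge \binom{k}{r} - \binom{k-f}{r} = \sum_{j=1}^{f} \binom{k-j}{r-1}$, so it is enough to prove the purely numerical inequality
\[
\sum_{j=1}^{f} \binom{k-j}{r-1} > \ceil{f/r}\,\binom{k-1}{r-1} \qquad \text{whenever } r < f \le k-r .
\]
This is the technical core. I would prove it by grouping the $f$ summands into $\ceil{f/r}$ consecutive blocks of length at most $r$ and comparing block sums to $\binom{k-1}{r-1}$, using that $f \le k-r$ keeps every summand positive (the smallest being $\binom{k-f}{r-1} \ge \binom{r}{r-1} = r$); a short induction on $\ceil{f/r}$ together with the telescoping identity $\sum_{j=a+1}^{a+r}\binom{k-j}{r-1} = \binom{k-a}{r} - \binom{k-a-r}{r}$ closes it. Granting this, $\nu_G(F) \le \ceil{f/r} < m/D$, and $G' = G - M$ for a minimum cover $M$ has the required properties.

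The main obstacle is exactly the displayed binomial inequality: it is tight up to lower-order terms (for instance $r=2$, $k=5$, $f=3$ gives $9 > 8$), so the argument carries essentially no slack, and the exclusion of both $K_k^{(r)}$ and $S_k^{(r)}$ is genuinely used. Everything else---the reduction to covering the full vertices, and the bounds $\nu_G(F) \le \ceil{f/r}$, $f \le k-r$, and $m \ge D+1$---is soft.
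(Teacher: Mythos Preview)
Your proposal is correct and follows essentially the same approach as the paper: identify the set $T$ (your $F$) of vertices of maximum degree $D = \binom{k-1}{r-1}$, cover $T$ by $\lceil |T|/r \rceil$ edges, and verify the resulting count inequality using $|E(G)| \ge \binom{k}{r} - \binom{k-|T|}{r}$. The only notable difference is in the numerical step: rather than your block/induction sketch (which cannot proceed by comparing each block separately---already for $r=2$, $k=5$, $f=3$ the second block sum is $2 < 4 = D$---so the excess from earlier blocks must be carried forward), the paper simply bounds $\lceil t/r \rceil \le (t+r-1)/r$ and observes that $\binom{k}{r} - \binom{k-t}{r} > \frac{t+r-1}{r}\binom{k-1}{r-1}$ rearranges to $\frac{k-r+1-t}{r}\binom{k-1}{r-1} > \frac{k-r+1-t}{r}\binom{k-t}{r-1}$, which is immediate for $2 \le t \le k-r$.
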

\begin{proof}
Define $T = \left\{ v \in G : \deg(v) = \binom{k-1}{r-1} \right\}$, and let $t = |T|.$ If $t = 0$ then setting $G' = G$ suffices.
If $t = 1$ and $G \neq S_k^{(r)}$ (as given by the condition) we can take $G'$ to be $G$ with a single edge containing the vertex in $T$ removed. Clearly, $\D(G') \le \binom{k-1}{r-1} - 1$ and
\[ |E(G')| = |E(G)| - 1 > \frac{\binom{k-1}{r-1} - 1}{\binom{k-1}{r-1}}|E(G)| \] as $|E(G)| > \binom{k-1}{r-1}$ because $G \neq S_k^{(r)}$.

Otherwise, the condition $G \neq K_k^{(r)}$ shows that $2 \le t \le k-r.$
Note that the $r$-graph induced by $T$ on $G$ is complete. Therefore, one can remove $\left\lceil t/r \right\rceil$ edges from $G$, such that all edges but at most one touch $r$ vertices in $T$, to get a subgraph $G'$ such that $\Delta(G') \le \binom{k-1}{r-1} - 1.$
Therefore, it suffices to verify that
\begin{equation*}
|E(G)| - \left\lceil \frac{t}{r} \right\rceil > \frac{\binom{k-1}{r-1} - 1}{\binom{k-1}{r-1}} \cdot |E(G)|.
\end{equation*}
Using $\left\lceil t/r \right\rceil \le \frac{t+r-1}{r}$ and $|E(G)| \ge \binom{k}{r} - \binom{k-t}{r}$ (which follows from the definition of $T$), it suffices to verify that
\begin{equation*}
\binom{k}{r} - \binom{k-t}{r} > \frac{t+r-1}{r}  \binom{k-1}{r-1}.
\end{equation*}
This reduces to \[ \frac{k-r+1-t}{r} \binom{k-1}{r-1} > \frac{k-r+1-t}{r} \binom{k-t}{r-1}, \] which is true for $2 \le t \le k-r.$
\end{proof}
We use an easy consequence of \cref{lemma:graphtheory2}.
\begin{corollary}
\label{lemma:graphtheory}
Let $G$ be a nonempty $r$-uniform hypergraph on $k$ vertices other than $K_k^{(r)}$ and $S_k^{(r)}$ for $k > r \ge 3$. Then there exists a subgraph $G'$ of $G$ such that
\[ \frac{|E(G')|}{\max(2, \D(G'))} > \frac{|E(G)|}{\binom{k-1}{r-1}}. \]
\end{corollary}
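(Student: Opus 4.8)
The plan is to feed the output of \cref{lemma:graphtheory2} directly into the desired inequality; the only thing that needs checking is that the cutoff $2$ in $\max(2,\D(G'))$ is never the binding term.

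First I would record the elementary bound $\binom{k-1}{r-1}\ge r\ge 3$: since $k>r\ge 3$ we have $k-1\ge r$, hence $\binom{k-1}{r-1}\ge\binom{r}{r-1}=r\ge 3$, so in particular $\binom{k-1}{r-1}-1\ge 2$. Next, apply \cref{lemma:graphtheory2} to the nonempty $r$-graph $G$ (which by hypothesis is neither $K_k^{(r)}$ nor $S_k^{(r)}$) to get a subgraph $G'$ with
\[ |E(G')| > \frac{\binom{k-1}{r-1}-1}{\binom{k-1}{r-1}}\,|E(G)| \qquad\text{and}\qquad \D(G') \le \binom{k-1}{r-1}-1. \]
Combining the degree bound with $2\le\binom{k-1}{r-1}-1$ gives $\max(2,\D(G'))\le\binom{k-1}{r-1}-1$. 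Since $|E(G')|\ge 0$ and $\binom{k-1}{r-1}-1>0$, dividing the edge-count estimate through yields
\[ \frac{|E(G')|}{\max(2,\D(G'))} \ge \frac{|E(G')|}{\binom{k-1}{r-1}-1} > \frac{1}{\binom{k-1}{r-1}-1}\cdot\frac{\binom{k-1}{r-1}-1}{\binom{k-1}{r-1}}\,|E(G)| = \frac{|E(G)|}{\binom{k-1}{r-1}}, \]
which is exactly the claimed bound.

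There is essentially no obstacle here: all the combinatorial content lives in \cref{lemma:graphtheory2}, and this corollary is just the bookkeeping needed to reinterpret its conclusion in the form $|E(G')|/\max(2,\D(G'))$ that will be used later. The one point worth flagging is the role of the hypothesis $r\ge 3$: it is precisely what forces $\binom{k-1}{r-1}-1\ge 2$, so that replacing $\binom{k-1}{r-1}-1$ by $\max(2,\D(G'))$ can only decrease the denominator and hence preserves the strict inequality.
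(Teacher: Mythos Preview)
Your proof is correct and follows essentially the same approach as the paper: both invoke \cref{lemma:graphtheory2} and use the observation that $\binom{k-1}{r-1}-1\ge 2$ when $k>r\ge 3$ to handle the $\max(2,\cdot)$. The only cosmetic difference is that the paper first separates out the trivial case $\Delta(G)<\binom{k-1}{r-1}$ (where $G'=G$ already works) before invoking the lemma, whereas you apply \cref{lemma:graphtheory2} unconditionally; your version is slightly more streamlined since the lemma already covers that case internally.
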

\begin{proof}
If $\D(G) < \binom{k-1}{r-1}$ we are done by taking $G' \cong G.$ If $\D(G) = \binom{k-1}{r-1}$, then take $G'$ to be $r$-graph in the conclusion of \cref{lemma:graphtheory2}. Then because $\binom{k-1}{r-1}-1 \ge 2$ by $k > r \ge 3$, we have \[ \frac{|E(G')|}{\max(2,\D(G'))} \ge \frac{|E(G')|}{\binom{k-1}{r-1} - 1} > \frac{|E(G)|}{\binom{k-1}{r-1}} \] as desired.
\end{proof}
\begin{proof}[Proof of \cref{lemma:neg}]
For $H' \ncong K_k^{(r)}$ and $H' \ncong S_k^{(r)}$, let $H''$ be a subgraph of $H'$ satisfying the properties of \cref{lemma:graphtheory}. Then \[ \frac{|E(H'')|}{\max(2, |\Delta(H'')|)} > \frac{|E(H')|}{\binom{k-1}{r-1}}. \] By \cref{cor:holder-bounded} we have that
\begin{align*}
t(H', U) \le t(H'', U) &\le \left( \int U(x_{[r]})^{\max(2, \Delta(H''))} \dx_{[r]} \right)^\frac{|E(H'')|}{\max(2, \Delta(H''))} \\
					& \ll \left( \int U(x_{[r]})^{\max(2, \Delta(H''))} \dx_{[r]} \right)^\frac{|E(H')|}{\binom{k-1}{r-1}} \lesssim p^{|E(H')|},
\end{align*}
where we used that 
\begin{equation*} 
\int U(x_{[r]})^{\max(2, \Delta(H''))} \dx_{[r]} \le \int U(x_{[r]})^2 \dx_{[r]} \lesssim p^{\binom{k-1}{r-1}}
\end{equation*} by \cref{eq:46}.
\end{proof}

We proceed towards the proof of \cref{thm:varsolutions}(a).

Fix $H = K_k^{(r)}.$ Let $W: V(G)^r \to [p, 1]$ be a symmetric function satisfying $t(H, W) \ge (1+\delta)p^{\binom{k}{r}}.$ Now, we set $U = W-p$ and assume that $U$ satisfies \cref{eq:ipbound} or else we are already done. By \cref{lemma:neg}, we know that
\begin{equation*}
(1+\delta)p^{\binom{k}{r}} \le t(H, W) = \sum_{H' \subseteq H} p^{|E(H)| - |E(H')|} t(H', U) = p^{\binom{k}{r}} + t(K_k^{(r)}, U) + k p^{\binom{k-1}{r}} t(S_k^{(r)}, U) + o(p^{\binom{k}{r}}).
\end{equation*}
Therefore, we have that
\begin{equation}
\label{eq:afterneg}
t(K_k^{(r)}, U) + k p^{\binom{k-1}{r}} t(S_k^{(r)}, U) \ge (\delta - o(1))p^{\binom{k}{r}}.
\end{equation}
Recall that for a constant $b$, we define $B_b = \{ x : d_2(x) > b \}.$ For the purposes of the proof, we
let $b$ satisfy $p^\eps \ll b \ll 1$ for a sufficiently small constant $\eps$ depending on $k$ and $r$.
Before continuing, we define two quantities:
\begin{equation*}
\theta_b = \int_{\overline{B}_b^r} U(x_{[r]})^2 \dx_{[r]} \quad \text{and} \quad  \eta_b = \int_{B_b \times \overline{B}_b^{r-1}} U(x_{[r]})^2 \dx_{[r]}.
\end{equation*}

Now, we analyze the $t(K_k^{(r)}, U)$ term first. We show the following bound.
\begin{lemma}
\label{lemma:completebound}
Let $U: V(G)^r \to [0, 1]$ be a symmetric function satisfying \cref{eq:ipbound}. Then we have for $k > r \ge 3$ that
\begin{equation*}
t(K_k^{(r)}, U) \le \theta_b^{k/r} + o(p^{\binom{k}{r}}).
\end{equation*}
\end{lemma}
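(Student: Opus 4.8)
The strategy is to split the integral defining $t(K_k^{(r)},U)$ according to which of the vertices $x_1,\dots,x_k$ land in $B_b$ and which land in $\overline{B}_b$, and to show that the only term that is not $o(p^{\binom kr})$ is the one with every $x_i\in\overline{B}_b$, which is at most $\theta_b^{k/r}$. Throughout I write $\Delta:=\binom{k-1}{r-1}$, and I will use freely that $0\le U\le1$ and that, by \cref{lemma:graphonlemma} applied to the hypothesis \cref{eq:ipbound}, $\int U(x_{[r]})^2\,\dx_{[r]}\lesssim p^{\Delta}$ and $\lambda(B_b)\lesssim p^{\Delta}/b$; in particular $\theta_b\le\int U(x_{[r]})^2\,\dx_{[r]}\lesssim p^{\Delta}$.

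First I would handle the main term. Applying \cref{cor:holder-clique} with the set $\overline{B}_b$, and then using $U^{\Delta}\le U^2$ (valid since $0\le U\le1$ and $\Delta\ge2$, which holds because $k>r\ge3$),
\[
\int_{\overline{B}_b^{\,k}}\prod_{S\in\binom{[k]}{r}}U(x_S)\,\dx_{[k]}
\le\left(\int_{\overline{B}_b^{\,r}}U(x_{[r]})^{\Delta}\,\dx_{[r]}\right)^{k/r}
\le\theta_b^{k/r}.
\]

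Next I would bound the finitely many remaining terms. Fix nonempty $T\subseteq[k]$ with $|T|=t$; by symmetry of $U$ we may take $T=[t]$, so the corresponding term is $\int_{B_b^{\,t}\times\overline{B}_b^{\,k-t}}\prod_{S}U(x_S)\,\dx_{[k]}$. Splitting the product into factors indexed by edges meeting $T$ and factors indexed by edges contained in $\{t+1,\dots,k\}$, bounding each factor of the first type by $1$, and integrating out $x_1,\dots,x_t$ first, this term is at most $\lambda(B_b)^{t}R_t$, where $R_t:=\int_{\overline{B}_b^{\,k-t}}\prod_{S\subseteq\{t+1,\dots,k\},\,|S|=r}U(x_S)\,\dx$. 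For $R_t$ I would distinguish three cases: if $k-t\ge r+1$ then \cref{cor:holder-clique} on the vertex set $\{t+1,\dots,k\}$ together with $U\le1$ gives $R_t\le\bigl(\int_{\overline{B}_b^{\,r}}U(x_{[r]})^{\binom{k-t-1}{r-1}}\,\dx_{[r]}\bigr)^{(k-t)/r}\le\theta_b^{(k-t)/r}$ (using $\binom{k-t-1}{r-1}\ge2$); if $k-t=r$ then Cauchy--Schwarz gives $R_t=\int_{\overline{B}_b^{\,r}}U\,\dx_{[r]}\le\theta_b^{1/2}$; and if $k-t<r$ then $R_t\le1$. In all cases $R_t\le\theta_b^{\beta(t)}$ for the appropriate exponent $\beta(t)\in\{(k-t)/r,\tfrac12,0\}$. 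Combining with $\lambda(B_b)\lesssim p^{\Delta}/b$, $\theta_b\lesssim p^{\Delta}$, and $b\gg p^{\eps}$, this term is $\lesssim b^{-t}p^{\Delta(t+\beta(t))}\ll p^{\Delta(t+\beta(t))-\eps t}$.

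Finally I would check the elementary inequality $t+\beta(t)>k/r$ for every $t\in\{1,\dots,k\}$, which is a short computation in each of the three cases; this is where $r\ge3$ is needed (when $k-t=r$ and $r=2$ the inequality degenerates to an equality, which is why $H=K_3$ is excluded). Granting it, $\Delta(t+\beta(t))-\eps t>\tfrac kr\Delta=\binom kr$ as soon as $\eps$ is chosen below a positive constant depending only on $k$ and $r$, so each of the $O(1)$ error terms is $o(p^{\binom kr})$, and adding them to the main bound gives $t(K_k^{(r)},U)\le\theta_b^{k/r}+o(p^{\binom kr})$. The only point needing care is the bookkeeping of the three sub-cases for $R_t$ and verifying that $t+\beta(t)>k/r$ has a uniform positive gap; there is no real analytic obstacle, since in every error term the small measure of $B_b$ already provides the required saving.
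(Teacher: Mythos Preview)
Your proof is correct, but the route differs from the paper's in two ways worth noting. The paper does not partition according to the exact number $t$ of coordinates lying in $B_b$; instead it handles all terms with at least one coordinate in $B_b$ in a single stroke via
\[
\int_{B_b\times V(G)^{k-1}}\prod_{S\in\binom{[k]}{r}}U(x_S)\,\dx_{[k]}\le\lambda(B_b)\,t(K_{k-1}^{(r)},U),
\]
followed by \cref{cor:holder-clique} applied to $K_{k-1}^{(r)}$. Second, in the borderline case $k=r+1$ (where the H\"older exponent $\binom{k-2}{r-1}$ drops to $1$), the paper invokes the $L^1$ bound \cref{eq:l1bound1} on $\int U$, whereas you handle the analogous situation (your Case~2, $k-t=r$) by Cauchy--Schwarz, obtaining $R_t\le\theta_b^{1/2}$. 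Your approach therefore trades the extra input \cref{eq:l1bound1} for a longer case analysis over $t$; the paper's approach is a little more economical, collapsing all error terms into one. A minor side effect is that your Cauchy--Schwarz step genuinely requires $r\ge3$ (as you observe, $t+\beta(t)=k/r$ exactly when $r=2$, $k=3$), while the paper's use of \cref{eq:l1bound1} in fact goes through for $r=2$ as well, even though the lemma is only stated for $r\ge3$.
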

\begin{proof}
Let $\binom{[k]}{r}$ denote the set of all subsets of $[k]$ of size $r$. Note that
\begin{align}
\int_{B_b \times V(G)^{k-1}} \prod_{S \in \binom{[k]}{r}} U(x_S) \dx_{[k]} &\le \lambda(B_b) t(K_{k-1}^{(r)}, U) \le \lambda(B_b) \left( \int U(x_{[r]})^{\binom{k-2}{r-1}} \dx_{[r]} \right)^\frac{\binom{k-1}{r}}{\binom{k-2}{r-1}} \nonumber \\
& \le \lambda(B_b) \left( \int U(x_{[r]})^{\binom{k-2}{r-1}} \dx_{[r]} \right)^\frac{k-1}{r} \label{eq:422}
\end{align}
by \cref{cor:holder-clique}.

If $k = r+1$, we can compute from the above that
\begin{align*}
\lambda(B_b) \left( \int U(x_{[r]})^{\binom{k-2}{r-1}} \dx_{[r]} \right)^\frac{k-1}{r} \le \lambda(B_b)  \int U(x_{[r]}) \dx_{[r]} \lesssim \frac{p^r}{b} \cdot p^\frac{r+1}{2} \sqrt{\log(1/p)} \ll p^{r+1}
\end{align*}
by \cref{eq:l1bound1}, as desired. Otherwise, we have that the expression in \cref{eq:422} is
\begin{align*}
\lambda(B_b) \left( \int U(x_{[r]})^{\binom{k-2}{r-1}} \dx_{[r]} \right)^\frac{k-1}{r} &\le \lambda(B_b) \left( \int U(x_{[r]})^{\binom{k-2}{r-1}} \dx_{[r]} \right)^\frac{k-1}{r} \\ 
&\le \lambda(B_b) \left(\int U(x_{[r]})^2 \dx_{[r]}\right)^\frac{k-1}{r} \lesssim \frac{p^{\binom{k-1}{r-1}}}{b} \cdot p^{\frac{k-1}{r} \binom{k-1}{r-1}} \ll p^{\binom{k}{r}}
\end{align*}
by \cref{eq:47}, as desired.

Finally, we have that
\begin{equation*}
t(K_k^{(r)}, U) =  \int_{\overline{B}_b^k} \prod_{S \in \binom{[k]}{r}} U(x_S) \dx_{[k]} + o(p^{\binom{k}{r}}) \le \left(\int_{\overline{B}_b^r}U(x_{[r]})^{\binom{k-1}{r-1}} dx_{[r]} \right)^{k/r} + o(p^{\binom{k}{r}}) \le \theta_b^{k/r} + o(p^{\binom{k}{r}})
\end{equation*}
by \cref{cor:holder-clique}.
\end{proof}

Now, we analyze the $t(S_k^{(r)}, U)$ term. We show the following bound.
\begin{lemma}
\label{lemma:starbound}
Let $U: V(G)^r \to [0, 1]$ be a symmetric function satisfying \cref{eq:ipbound}. Then we have for $k > r \ge 3$ that
\begin{equation*}
t(S_k^{(r)}, U) \le \eta_b + o(p^{\binom{k-1}{r-1}}).
\end{equation*}
\end{lemma}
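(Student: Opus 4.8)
The plan is to estimate $t(S_k^{(r)},U)$ by conditioning on the center of the star. For each $x\in V(G)$ write $g_x:=U(x,\cdot)\colon V(G)^{r-1}\to[0,1]$ for the (symmetric) slice of $U$ at $x$. Since the $\binom{k-1}{r-1}$ edges of $S_k^{(r)}$ through its center correspond exactly to the edges of a complete $(r-1)$-graph on the $k-1$ leaf vertices, weighted by $g_x$, we have
\[
t(S_k^{(r)},U)=\int t\!\left(K_{k-1}^{(r-1)},g_x\right)\dx .
\]
First I would feed the inner clique density into the generalized H\"older inequality for cliques (\cref{cor:holder-clique}, applied with $(k,r)$ there replaced by $(k-1,r-1)$), obtaining
\[
t\!\left(K_{k-1}^{(r-1)},g_x\right)\le\left(\int g_x(x_{[r-1]})^{\binom{k-2}{r-2}}\,\dx_{[r-1]}\right)^{\frac{k-1}{r-1}}\le\left(\int g_x(x_{[r-1]})^{2}\,\dx_{[r-1]}\right)^{\frac{k-1}{r-1}}=d_2(x)^{\frac{k-1}{r-1}},
\]
where the second step uses $0\le g_x\le1$ together with $\binom{k-2}{r-2}\ge2$, which holds for all $k>r\ge3$. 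Thus it remains to prove $\int d_2(x)^{(k-1)/(r-1)}\,\dx\le\eta_b+o\!\left(p^{\binom{k-1}{r-1}}\right)$.

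I would then split this integral over $\overline{B}_b$ and $B_b$. On $\overline{B}_b$ one has $d_2(x)\le b$, hence $d_2(x)^{(k-1)/(r-1)}\le b^{(k-r)/(r-1)}d_2(x)$ since $(k-r)/(r-1)>0$; combined with \cref{eq:46} this contributes at most $b^{(k-r)/(r-1)}p^{\binom{k-1}{r-1}}=o(p^{\binom{k-1}{r-1}})$ because $b=o(1)$. On $B_b$, using $(k-1)/(r-1)\ge1$ and $d_2\le1$ I would crudely bound $d_2(x)^{(k-1)/(r-1)}\le d_2(x)$, so that $\int_{B_b}d_2(x)\,\dx=\int_{B_b\times V(G)^{r-1}}U(x_{[r]})^2\,\dx_{[r]}$. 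Splitting the region $B_b\times V(G)^{r-1}$ according to whether the last $r-1$ coordinates all lie in $\overline{B}_b$ (this part is exactly $\eta_b$) or at least one lies in $B_b$, a union bound and the symmetry of $U^2$ bound the remaining part by $(r-1)\int_{B_b\times B_b\times V(G)^{r-2}}U(x_{[r]})^2\,\dx_{[r]}$.

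This last quantity is the one point requiring care, and I would handle it by Cauchy--Schwarz in the two $B_b$-variables. Writing $F(x_1,x_2):=\int U(x_{[r]})^2\,\dx_{\{3,\dots,r\}}\in[0,1]$,
\[
\int_{B_b\times B_b}F\le\lambda(B_b)\left(\int F^2\right)^{1/2}\le\lambda(B_b)\left(\int F\right)^{1/2}=\lambda(B_b)\left(\int U^2\right)^{1/2}\lesssim\frac{p^{\binom{k-1}{r-1}}}{b}\cdot p^{\binom{k-1}{r-1}/2},
\]
using \cref{eq:46,eq:47}; since $\binom{k-1}{r-1}\ge3$ and $b\gg p^\eps$ for a sufficiently small constant $\eps$, the right-hand side is $o(p^{\binom{k-1}{r-1}})$. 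Adding up the three estimates gives $t(S_k^{(r)},U)\le\eta_b+o(p^{\binom{k-1}{r-1}})$. The routine part is the center-conditioning followed by the clique H\"older estimate; the main obstacle is controlling the ``at least two coordinates in $B_b$'' contribution, which is precisely where the flexibility $p^\eps\ll b\ll1$ in the choice of threshold $b$ enters, although the $[0,1]$-boundedness of the partial integral $F$ makes the reduction to $\int U^2$ straightforward.
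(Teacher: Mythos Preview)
Your proof is correct and follows essentially the same strategy as the paper's: condition on the star's center, apply the clique H\"older inequality to reduce to powers of $d_2$, and split according to whether the center lies in $B_b$. The only minor difference is that for the ``two coordinates in $B_b$'' contribution the paper bounds the original star integral trivially by $\lambda(B_b)^2$ (using $U\le 1$ on $B_b\times B_b\times V(G)^{k-2}$), whereas your Cauchy--Schwarz argument is a slight detour---you could equally well have used $\int_{B_b^2\times V(G)^{r-2}}U^2\le\lambda(B_b)^2$ directly.
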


\begin{proof}
Let $T = \{ S \subseteq [k] : 1 \in S \text{ and } |S| = r \}.$
By \cref{cor:holder-clique} and \cref{lemma:graphonlemma} we have that
\begin{align}
\label{eq:r2}
\int_{\overline{B}_b \times V(G)^{k-1}} \prod_{S \in T} U(x_S) \dx_{[k]} &\le \int_{\overline{B}_b} \left(\int U(x_{[r]})^{\binom{k-2}{r-2}} \dx_2 \dx_3 \dots \dx_r \right)^\frac{k-1}{r-1} \dx_1 \\
\label{eq:r3}
&\le \int_{\overline{B}_b} d_2(x)^\frac{k-1}{r-1} \dx \le b^\frac{k-r}{r-1} \cdot \int d_2(x) \dx\\
&\lesssim p^{\binom{k-1}{r-1}} b^\frac{k-r}{r-1} \ll p^{\binom{k-1}{r-1}}. \nonumber
\end{align}
We remark that going from \cref{eq:r2} to \cref{eq:r3} required $r \ge 3$ to get that $\binom{k-2}{r-2} \ge 2$ for $k > r \ge 3.$
By \cref{lemma:graphonlemma} we also have that
\begin{equation*}
\int_{B_b \times B_b \times V(G)^{k-2}} \prod_{S \in T} U(x_S) \dx_{[k]} \le \lambda(B_b)^2 \lesssim \frac{p^{2\binom{k-1}{r-1}}}{b^2} \ll p^{\binom{k-1}{r-1}}.
\end{equation*}
Thus by \cref{cor:holder-clique} again,
\begin{align*}
t(S_k^{(r)}, U) &\le \int_{B_b \times \overline{B}_b^{k-1}} \prod_{S \in T} U(x_S) \dx_{[k]} + o(p^{\binom{k-1}{r-1}}) \\
&\le  \int_{B_b} \left(\int_{\overline{B}_b^{r-1}} U(x_{[r]})^{\binom{k-2}{r-2}} \dx_2 \dx_3 \cdots \dx_r \right)^\frac{k-1}{r-1} \dx_1 + o(p^{\binom{k-1}{r-1}}) \\
&\le \int_{B_b} \left(\int_{\overline{B}_b^{r-1}} U(x_{[r]})^2 \dx_2 \dx_3 \cdots \dx_r \right)^\frac{k-1}{r-1} \dx_1 + o(p^{\binom{k-1}{r-1}}) \\
&\le \int_{B_b \times \overline{B}_b^{r-1}} U(x_{[r]})^2 \dx_{[r]} + o(p^{\binom{k-1}{r-1}}) = \eta_b + o(p^{\binom{k-1}{r-1}})
\end{align*}
as desired.
\end{proof}

\begin{proof}[Proof of \cref{thm:varsolutions}(a)]
Note that by \cref{lemma:x2} we have that
\begin{equation}
\label{eq:thetaeta}
\theta_b + r\eta_b \le \int U(x_{[r]})^2 \dx_{[r]} \le (1+o(1)) \cdot \frac{1}{\log(1/p)} \int I_p(p + U(x_{[r]})) \dx_{[r]}.
\end{equation}

Combining \cref{lemma:completebound}, \cref{lemma:starbound}, \cref{eq:afterneg}, and \cref{eq:thetaeta} yields that
\begin{align}
&\int I_p(p + U(x_{[r]})) \dx_{[r]} \nonumber \\ &\ge (1-o(1)) \log(1/p) \inf\left\{ x+ry : x^{k/r} + kp^{\binom{k-1}{r}} y \ge (\delta+o(1))p^{\binom{k}{r}} \text{ and } x, y \ge 0 \right\} \label{eq:fin} \\
&= (1-o(1))\min\left( \delta^{r/k}, \frac{r}{k} \delta \right) p^{\binom{k-1}{r-1}} \log(1/p) \nonumber
\end{align}
by convexity of the functions $x^{k/r}$ and $y$ (e.g., see \cite[Lemma~5.4]{BGLZ17}).
\end{proof}

\section{Order of the rate function}
\label{sec:orderrate}
We now show \cref{thm:asympbound}, showing that the rate function conjectured in \cref{conj:main} has the correct order of magnitude.
\begin{proof}[Proof of \cref{thm:asympbound}]
Assume $\Delta \ge 2$, as $\Delta = 1$ is trivial. The upper bound follows from \cref{lemma:conjupper}. For the lower bound, we use the same notation as in \cref{sec:lowerbound}. Let $W$ be the weighted adjacency array of an $r$-graph $G$ such that $t(H, W) \ge (1+\delta)p^{|E(H)|}.$ Letting $U = W-p$, we can expand \[ t(H, W) = \sum_{H' \subseteq H} p^{|E(H)-|E(H')|} t(H', U). \] Recall that \[ t(H', U) \le \left( \int U(x_{[r]})^\D \dx_{[r]} \right)^\frac{|E(H')|}{\D} \le \left( \int U(x_{[r]})^2 \dx_{[r]} \right)^\frac{|E(H')|}{\D} \] by \cref{cor:holder-bounded}. Therefore, we get that \[ (1+\d)p^{|E(H)|} \le \sum_{H' \subseteq H} p^{|E(H)-|E(H')|} t(H', U) \ls \sum_{H' \subseteq H} p^{|E(H)-|E(H')|} \left( \int U(x_{[r]})^2 \dx_{[r]} \right)^\frac{|E(H')|}{\D}. \] This implies that $\int U(x_{[r]})^2 \dx_{[r]} \gtrsim p^\Delta.$ Now by \cref{lemma:x2}, \[ \int I_p(p+U(x_{[r]})) \dx_{[r]} \ge (1-o(1)) \log(1/p) \int U(x_{[r]})^2 \dx_{[r]} \gs p^\D \log(1/p). \qedhere \]
\end{proof}

\section{Solution to the Variational Problem for a Special Graph}
\label{sec:quad}

In this section we prove \cref{thm:varsolutions}(b), solving the variational problem for the 3-graph in \cref{fig:graph1}, reproduced below:
\[
\begin{tikzpicture}[scale=.4, every node/.style={draw, circle, black, fill, inner sep = 0pt, minimum width = 3pt},font=\footnotesize]
	\node (1) at (0, 0) {};
	\node (2) at (2, 0) {};
	\node (3) at (0, 2) {};
	\node (4) at (1.3, 1.3) {};
	\node (5) at (intersection of 1--2 and 3--4) {};
	\node (6) at (intersection of 1--3 and 2--4) {};
	\draw (2)--(6)--(1)--(5)--(3);
\end{tikzpicture}
\]

We have resolved the upper bound in \cref{sec:future}, specifically \cref{ex:quad}. For the lower bound, we develop slightly more general techniques than those used in \cref{sec:lowerbound}. We use the same notation as in \cref{sec:lowerbound}.

\subsection{Analogues and extensions of \cref{lemma:graphonlemma}}
For this section, let $H$ be any $r$-graph with maximum degree $\Delta.$ Recall that $\G_n$ is the family of all edge-weighted $r$-graphs on $n$ vertices with weights in $[0, 1].$ Throughout, we let $W: [n]^r \to [0, 1]$ denote the adjacency array of a graph in $\G_n$. We show that when solving the discrete variational problem, it suffices to consider only a subset of the weighted $r$-graphs in $\G_n$, where there are no $\xr \in [n]^r$ satisfying $p < W(\xr) \le (1+o(1))p.$ In other words, there are no $\xr \in [n]^r$ such that $W(\xr)$ has value very close to $p$ but not equal to $p$. This is made precise by the following lemma.
\begin{lemma}
\label{lemma:redux}
Let $H$ be an $r$-graph, and let $\delta$ be fixed. Define, for every $\ka > 0$,
\begin{align*} \phi_{\ka}(H, n, p, \delta) = \inf \big\{I_p(W) : W& \in \G_n \text{ with } t(H, W) \ge (1+\d)p^{|E(H)|} \\ &\text{ and } W(\xr) \in \{p\} \cup [(1+\ka)p, 1] \forall \xr \in [n]^r \big\}. \end{align*} 
Then
\[
\phi(H, n, p, \d) \ge \phi_{\ka}(H, n, p, (1+\ka)^{-|E(H)|}(1+\d)-1).
\]
\end{lemma}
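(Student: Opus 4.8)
The plan is to show that any admissible edge-weighted $r$-graph for the problem defining $\phi(H,n,p,\delta)$ can be replaced, without increasing its relative entropy, by an admissible edge-weighted $r$-graph for the problem defining $\phi_\kappa(H,n,p,\delta')$ with $\delta' := (1+\kappa)^{-|E(H)|}(1+\delta)-1$; the stated inequality then follows by taking infima over admissible inputs. The conversion is a single ``rounding'' step: given $W \in \G_n$ with $t(H,W) \ge (1+\delta)p^{|E(H)|}$, define $W' \in \G_n$ by replacing every edge-weight lying in $[0,(1+\kappa)p)$ by $p$ and leaving the remaining edge-weights (those in $[(1+\kappa)p,1]$) unchanged. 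Then $W'$ is symmetric, has all entries in $\{p\}\cup[(1+\kappa)p,1]$, and vanishes on repeated indices, so it is a legitimate edge-weighted $r$-graph and is admissible for $\phi_\kappa$ as soon as the density constraint is checked.

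Since $I_p \ge 0$ with $I_p(p) = 0$, every edge $e$ with $W'(e) = p$ contributes $0 \le I_p(W(e))$ to the sum defining $I_p$, while every other edge satisfies $W'(e) = W(e)$; summing over edges gives $I_p(W') \le I_p(W)$. For the density, the key pointwise estimate is $W(\x_S) \le (1+\kappa)\,W'(\x_S)$ for every $r$-tuple $\x_S$: if $W(\x_S) < (1+\kappa)p$ then $W'(\x_S) = p$ and $W(\x_S) < (1+\kappa)p = (1+\kappa)W'(\x_S)$; if $W(\x_S) \ge (1+\kappa)p$ then $W(\x_S) = W'(\x_S)$; and on non-injective tuples both sides vanish. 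Substituting this into the definition $t(H,W) = \int \prod_{S \in E(H)} W(\x_S)\,\dx_{[k]}$ (all factors nonnegative) gives $t(H,W) \le (1+\kappa)^{|E(H)|} t(H,W')$, hence $t(H,W') \ge (1+\kappa)^{-|E(H)|}(1+\delta)p^{|E(H)|} = (1+\delta')p^{|E(H)|}$. Therefore $W'$ is admissible for the problem defining $\phi_\kappa(H,n,p,\delta')$, so $I_p(W) \ge I_p(W') \ge \phi_\kappa(H,n,p,\delta')$; taking the infimum over all admissible $W$ completes the argument.

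I expect no serious obstacle in this proof; the only points requiring attention are (i) confirming that $W'$ is still a bona fide edge-weighted $r$-graph (symmetry, entries in $[0,1]$, vanishing on repeated indices) after rounding — which the construction respects by rounding edge by edge — and (ii) correctly bookkeeping the relaxed parameter $\delta'$, whose precise value is forced by the uniform factor $1+\kappa$ lost in each of the $|E(H)|$ edge-factors of $t(H,\cdot)$. One should also note the degenerate case where $\delta' \le 0$: then the rounding still produces an admissible $W'$ (indeed the constant-$p$ array already witnesses $\phi_\kappa = 0$ there), so the claimed inequality holds trivially, and in the regime of interest $\kappa$ is taken small enough that $\delta' > 0$.
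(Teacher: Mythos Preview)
Your proof is correct and follows essentially the same approach as the paper: round every edge-weight below $(1+\kappa)p$ to $p$, observe that $W \le (1+\kappa)W'$ pointwise so $t(H,W) \le (1+\kappa)^{|E(H)|} t(H,W')$, and that $I_p(W') \le I_p(W)$ since $I_p \ge 0 = I_p(p)$. The paper's argument is the same two-line computation; you have simply spelled out the pointwise checks and the degenerate case $\delta' \le 0$ in more detail.
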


\begin{proof}
Given a symmetric $W \colon [n]^r \to [0,1]$ with $t(H, W) \ge (1+\delta)p^{|E(H)|}$, set $W': [n]^r \to [0, 1]$ by
\[
W'(\xr) = \begin{cases}
p, & \text{if } W(\xr) \le (1+\ka)p, \\
W(\xr), & \text{otherwise.}
\end{cases}
\]
We have $W \le (1+\kappa) W'$ and $I_p(W(\cdot)) \ge I_p(W'(\cdot))$ both holding pointwise.
So $t(H,W) \le (1+\ka)^{|E(H)|}t(H,W')$, and thus $W'$ satisfies the constraints in the definition of $\phi_{\ka}(H, n, p, (1+\ka)^{-|E(H)|}(1+\d)-1)$. The claimed inequality then follows.
\end{proof}

We use \cref{lemma:redux} in the setting where $\ka := \ka(p) = o(1)$ as $p \to 0.$ In this way, we have that for some $\delta' = \delta - o(1)$ that
\[ \phi(H, n, p, \d) \ge \phi_{\ka}(H, n, p, (1+\ka)^{-|E(H)|}(1+\d)-1) = \phi_{\ka}(H, n, p, \delta'). \] We now focus on lower bounding $\phi_{\ka}(H,n,p,\d)$.

Let $G$ be some $r$-graph on $n$ vertices. We first state an extension of \cref{lemma:graphonlemma} that we will use.
\begin{lemma}
\label{lemma:graphonlemmaextend}
Let $U: V(G)^r \to [0, 1]$ be a symmetric function satisfying
\begin{equation} \label{eq:ipbound2} \int I_p(p + U(x_{[r]})) \dx_{[r]} \lesssim p^\Delta \log(1/p). \end{equation}
For $x \in V(G)$, define $d(x) = \int U(x, x_1, \dots, x_{r-1}) \dx_{[r-1]}.$ Then we have that
\begin{equation}
\label{eq:l2bound}
\int U(x_{[r]})^2 \dx_{[r]} \lesssim p^\D.
\end{equation}
Let $\eps > 0$ be a fixed parameter (not depending on $p$), and let $b, b'$ be parameters such that $b' < b$. If we define $B = \{x \in V(G): b' \le d(x) \le b \}$ then we have that if $p^{1-\eps} \ll b$ then
\begin{equation}
\label{eq:d2bound1}
\int_B d(x)^2 \dx\ls_\eps p^\D b
\end{equation}
and if $p^{1-\eps} \ll b'$ then
\begin{equation}
\label{eq:lam1}
\lambda(B) \ls_\eps \frac{p^\D}{b'}
\end{equation}
where $\lambda(B) = \int_B 1 \dx.$ Here $\ls_\eps$ denotes that the constant in the $\ls$ depends on $\eps.$
\end{lemma}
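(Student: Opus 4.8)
The plan is to derive all three estimates from a single ``master inequality'' obtained by Jensen's inequality applied to the convex function $y \mapsto I_p(p+y)$, together with the pointwise bounds on $I_p$ recorded in \cref{lemma:approxip,lemma:Iplower,lemma:x2}.

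Bound \cref{eq:l2bound} needs no new idea: exactly as for \cref{eq:46}, by \cref{lemma:x2} one has $\int U(x_{[r]})^2\,\dx_{[r]} \le (1+o(1))\frac{1}{\log(1/p)}\int I_p(p+U(x_{[r]}))\,\dx_{[r]} \ls p^\D$ using the hypothesis \cref{eq:ipbound2}. For the estimates on $B$, I would fix $x$ and apply Jensen in the variables $x_{[r-1]}$ to get $\int I_p(p+U(x,x_{[r-1]}))\,\dx_{[r-1]} \ge I_p(p+d(x))$; integrating over $x \in B$ and invoking \cref{eq:ipbound2} gives the master inequality $\int_B I_p(p+d(x))\,\dx \ls p^\D\log(1/p)$. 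Everything then comes down to how one lower bounds $I_p(p+d(x))$ on $B$.

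For \cref{eq:lam1}: on $B$ one has $d(x) \ge b' \gg p^{1-\eps} \gg p$, so monotonicity of $I_p$ on $[p,1]$ and \cref{lemma:approxip} give $I_p(p+d(x)) \ge I_p(p+b') \sim b'\log(b'/p) \gs_\eps b'\log(1/p)$, since $b'/p \gg p^{-\eps}$ keeps the logarithm of order $\log(1/p)$; plugging into the master inequality gives $\lambda(B)\,b'\log(1/p) \ls_\eps p^\D\log(1/p)$. For \cref{eq:d2bound1}: on $B$ one has $d(x) \le b = o(1)$, so \cref{lemma:Iplower} applies and yields the sharper $d(x)^2 \le \frac{b^2}{I_p(p+b)}\,I_p(p+d(x))$; since $b \gg p^{1-\eps}$, \cref{lemma:approxip} gives $I_p(p+b) \gs_\eps b\log(1/p)$, hence $\frac{b^2}{I_p(p+b)} \ls_\eps \frac{b}{\log(1/p)}$, and integrating over $B$ against the master inequality gives $\int_B d(x)^2\,\dx \ls_\eps p^\D b$.

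The one genuinely delicate point is the extra factor $b$ in \cref{eq:d2bound1}: the crude bound $I_p(p+y) \gs y^2\log(1/p)$ from \cref{lemma:x2} only delivers $\int_B d(x)^2 \ls p^\D$, losing exactly this factor. Recovering it forces use of the finer \cref{lemma:Iplower}, whose improvement over $y^2\log(1/p)$ is the ratio $I_p(p+b)/(b^2\log(1/p)) \asymp \log(b/p)/(b\log(1/p))$, and this is $\gs_\eps 1/b$ precisely because $b \gg p^{1-\eps}$ keeps $\log(b/p)$ comparable to $\log(1/p)$. In other words, the hypotheses $p^{1-\eps}\ll b$ (resp.\ $p^{1-\eps}\ll b'$) are used exactly to place the relevant value of $d$ in the regime where $I_p(p+d) \asymp d\log(1/p)$ rather than $\asymp d^2/p$, and that is where the $\eps$-dependence of the implied constants appears; everything else is a routine pairing of Jensen's inequality with the $I_p$-asymptotics.
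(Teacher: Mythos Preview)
Your proposal is correct and follows essentially the same approach as the paper's proof: the ``master inequality'' you derive via Jensen is exactly the inequality $\int_B I_p(p+d(x))\,\dx \le \int I_p(p+U(x_{[r]}))\,\dx_{[r]} \ls p^\D\log(1/p)$ that the paper obtains by convexity, and your use of monotonicity plus \cref{lemma:approxip} for \cref{eq:lam1} and of \cref{lemma:Iplower} plus \cref{lemma:approxip} for \cref{eq:d2bound1} matches the paper line for line. Your closing paragraph correctly identifies why the hypothesis $p^{1-\eps}\ll b$ is needed and where the $\eps$-dependence enters.
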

Now, for the remainder of the section, we write $\ka := \ka(p) \to 0$, where the dependence on $p$ is implicit.
Additionally, we only consider the case \begin{align*} &W(\xr) \in \{p\} \cup [(1+\ka )p, 1] \forall \xr\in V(G)^r \\ \text{ and } &U(\xr) \in \{0\} \cup [\ka p, 1-p] \forall \xr\in V(G)^r, \end{align*} where $U = W-p$ and $\ka $ is an arbitrary function satisfying $\ka  = o(1)$ as $p \to 0.$ In this setting, we can state a different extension of \cref{lemma:graphonlemma} that we will also use.
\begin{lemma}
\label{lemma:graphonlemmaextend2}
Let $\ka $ be an arbitrary function satisfying $\ka  = o(1)$ as $p \to 0.$ Let $U: V(G)^r \to [0, 1]$ be a symmetric function satisfying 
\begin{equation} \label{eq:ipbound22} U(\xr) \in \{0\} \cup [\ka p, 1-p] \forall \xr\in V(G)^r \end{equation}
and
\begin{equation*} \int I_p(p + U(x_{[r]})) \dx_{[r]} \lesssim p^\Delta \log(1/p) \end{equation*}
For $x \in V(G)$, define $d(x) = \int U(x, x_1, \dots, x_{r-1}) \dx_{[r-1]}.$ Then we have that
\begin{equation}
\label{eq:l1bound}
\int d(x) \dx = \int U(\xr) d\xr \lesssim \frac{p^\D \log(1/p)}{\ka }.
\end{equation}
For arbitrary parameters $b, b'$ (not necessarily $b, b' \gg p$) if we define $B = \{x \in V(G) : b' \le d(x) \le b \}$ then we have that
\begin{equation}
\label{eq:d2bound}
\int_B d(x)^2 \dx \ls \frac{p^\D \log(1/p)b}{\ka }
\end{equation}
and
\begin{equation}
\label{eq:lam2}
\lambda(B) \ls \frac{p^\D \log(1/p)}{b' \ka },
\end{equation}
where $\lambda(B) = \int_B 1 \dx.$
\end{lemma}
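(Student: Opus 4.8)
The plan is to deduce all three estimates from a single pointwise lower bound on the rate function: there is an absolute constant $c>0$ such that, once $p$ is small enough (uniformly over all admissible $\ka=\ka(p)=o(1)$),
\[ I_p(p+x) \ge c\,\ka\, x \qquad \text{for every } x\in[\ka p,\,1-p]. \]
To prove this I would first use that $I_p$ is convex on $[0,1]$ with minimum value $0$ attained at $p$, so the secant slope $x\mapsto I_p(p+x)/x$ is nondecreasing for $x>0$ (the increasing-slopes property of convex functions applied at the basepoint $p$); hence it suffices to handle $x=\ka p$, i.e.\ to show $I_p\big((1+\ka)p\big)\gs \ka^2 p$. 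Since $\ka p\ll p$, \cref{lemma:approxip} gives $I_p\big((1+\ka)p\big)=I_p(p+\ka p)\sim\tfrac12(\ka p)^2/p=\tfrac12\ka^2 p$, whence $I_p(p+x)/x\ge I_p\big((1+\ka)p\big)/(\ka p)\sim\tfrac12\ka$, which is the desired bound.

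Granting this, \cref{eq:l1bound} is immediate: by hypothesis \cref{eq:ipbound22} every value $U(\xr)$ is either $0$ (in which case $I_p(p+U(\xr))=\ka U(\xr)=0$) or lies in $[\ka p,\,1-p]$ (where the pointwise bound applies), so integrating gives $\ka\int U(\xr)\,\dx_{[r]}\ls\int I_p(p+U(\xr))\,\dx_{[r]}\ls p^\D\log(1/p)$, and $\int d(x)\,\dx=\int U(\xr)\,\dx_{[r]}$ by Fubini. The last two estimates then follow from \cref{eq:l1bound} by elementary truncation. On $B$ one has $0\le d(x)\le b$, so $d(x)^2\le b\,d(x)$ and $\int_B d(x)^2\,\dx\le b\int d(x)\,\dx\ls p^\D\log(1/p)\,b/\ka$, which is \cref{eq:d2bound}; and on $B$ one has $d(x)\ge b'$, so $b'\,\lambda(B)\le\int_B d(x)\,\dx\le\int d(x)\,\dx\ls p^\D\log(1/p)/\ka$, which is \cref{eq:lam2}. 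Note that, unlike in \cref{lemma:graphonlemmaextend}, no lower bound on $b$ or $b'$ is needed here, because the multiplicative separation $U(\xr)\in\{0\}\cup[\ka p,\,1-p]$ already forces every nonzero value of $U$ to cost at least $\gs\ka$ times its magnitude in relative entropy, so the argument never needs to localize to a range where $d(x)\gg p$.

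The only step that takes any care is the pointwise bound, and even there the difficulty is quantitative rather than conceptual: one must ensure that $I_p\big((1+\ka)p\big)\sim\tfrac12\ka^2 p$ — equivalently, that the terms linear in $\ka$ in the expansion of $I_p\big((1+\ka)p\big)$ cancel, leaving a positive quadratic term — holds with an error uniform over all sequences $\ka=\ka(p)=o(1)$, so that the constants hidden in the $\ls$'s of the conclusion do not secretly depend on $\ka$. This can be arranged either by invoking \cref{lemma:approxip} as above or, to be completely explicit, by Taylor expanding $I_p\big((1+\ka)p\big)=(1+\ka)p\log(1+\ka)+\big(1-(1+\ka)p\big)\log\frac{1-(1+\ka)p}{1-p}$ and checking the cancellation directly.
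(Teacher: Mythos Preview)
Your proof is correct and follows essentially the same route as the paper: establish the pointwise bound $I_p(p+t)\gtrsim\ka t$ on $\{0\}\cup[\ka p,1-p]$ by checking it at $t=\ka p$ via \cref{lemma:approxip} and extending by convexity (increasing secant slopes through the minimum at $p$), then deduce \cref{eq:l1bound} by integration and \cref{eq:d2bound}, \cref{eq:lam2} by the same truncation bounds $d(x)^2\le b\,d(x)$ and $b'\le d(x)$ on $B$. Your added remark on uniformity of the implied constants in $\ka$ is a useful clarification but does not alter the argument.
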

Before proceeding to the proofs, we point out some differences between \cref{lemma:graphonlemma} versus \cref{lemma:graphonlemmaextend,lemma:graphonlemmaextend2}. The main differences comes from a stronger bound on $\int U(\xr) d\xr$ (compare \cref{eq:l1bound} and \cref{eq:l1bound1}). This stronger bound allowed us to bound $\lambda(B)$ and $\int_B d(x)^2 \dx$ even in the case where $B = \{x \in V(G) : b' \le d(x) \le b \}$ and $b, b' \ll p$. Essentially, \cref{lemma:redux} allowed us to restrict our attention to only a subset of weighted $r$-graphs in $\G_n$, and we are able to achieve better bounds on say $\int U(\xr) d\xr$ for this subset.

We now proceed to proofs of \cref{lemma:graphonlemmaextend} and \cref{lemma:graphonlemmaextend2}.
\begin{proof}[Proof of \cref{lemma:graphonlemmaextend}]
By \cref{lemma:x2} we have that
\[
\int U(x_{[r]})^2 \dx_{[r]} \lesssim \frac{1}{\log(1/p)} \int I_p(p + U(x_{[r]})) \dx_{[r]} \lesssim p^\D,
\]
which proves \cref{eq:l2bound}.
It is clear from the definition of $B$ (in the case $b', b \gg p^{1-\eps}$) and convexity that
\[ \lambda(B) I_p(p+b') \le \int_B I_p(p+d(x)) \dx \le \int I_p(p+U(x_{[r]})) d\xr \ls p^\D \log(1/p), \]
so
\[ \lambda(B) \ls \frac{p^\D \log(1/p)}{I_p(p+b')} \ls_\eps \frac{p^\D}{b'} \] by \cref{lemma:approxip}. This shows \cref{eq:lam1}. We used $b' \gg p^{1-\eps}$ to obtain $I_p(p+b') \gs b' \log(b'/p) \gs_\eps b' \log(1/p).$ Now, by \cref{lemma:Iplower}, we have that
\[ I_p(p+b) \int_B (d(x)/b)^2 \dx \le \int_B I_p(p+d(x)) \dx \le \int I_p(p+U(\xr)) d\xr \ls p^\D \log(1/p), \]
so by \cref{lemma:approxip},
\[ \int_B d(x)^2 \dx \ls \frac{b^2 p^\D \log(1/p)}{I_p(p+b)} \ls_\eps p^\D b, \] which shows \cref{eq:d2bound1}. We used $b \gg p^{1-\eps}$ to obtain $I_p(p+b) \gs b \log(b/p) \gs_\eps b \log(1/p).$
\end{proof}

\begin{proof}[Proof of \cref{lemma:graphonlemmaextend2}]
We first argue that for all $t \in \{0\} \cup [\ka p, 1-p]$ we have that $I_p(p+t) \gs \ka t.$ Indeed, this holds for $t = 0.$ For $t = \ka p$, we have by \cref{lemma:approxip} that \[ I_p(p + t) = I_p(p + \ka p) \gs \frac{(\ka p)^2}{p} = \ka t. \] Now, by convexity of $I_p(p+x)$ we have that $I_p(p+t) \gs \ka t$ for all $t \ge \ka p$ as desired. Using this, we get that
\[ \int d(x) \dx = \int U(\xr) d\xr \ls \int \frac{I_p(p + U(\xr))}{\ka } d\xr \ls \frac{p^\D \log(1/p)}{\ka } \] which shows \cref{eq:l1bound}.
Now, \[ \int_B d(x)^2 \dx \le b \int_B d(x) \dx \ls \frac{p^\D \log(1/p)b}{\ka } \] by the above. This shows \cref{eq:d2bound}.
Also, we have that \[ \lambda(B)b' \le \int_B d(x) \dx \ls \frac{p^\D \log(1/p)}{\kappa}, \]
so
\[ \lambda(B) \ls \frac{p^\D \log(1/p)}{b' \ka }, \] which shows \cref{eq:lam2}.
\end{proof}

\subsection{Proof of \cref{thm:varsolutions}(b)}
Let $H$ be the 3-graph in \cref{fig:graph1}, and $\D = 2$. Let $W:V(G)^3 \to [0, 1]$ be a symmetric function satisfying $t(H, W) \ge (1+\d)p^4$, and let $U = W-p.$ We assume that $U$ satisfies \cref{eq:ipbound2} or else we are already done.

To this end, let $b_1, b_2$ be parameters so that $p^{2\eps_0} \ll b_1, b_2 \ll p^{\eps_0}$ for some fixed sufficiently small $\eps_0 > 0$ (say $\eps_0 = \frac{1}{100}$). Define \[ B_1 = \{ x \in V(G) : d(x) \ge b_1 \}, B_2 = \{ x \in V(G) : pb_2 \le d(x) < b_1 \}, B_3 = \{ x \in V(G) : d(x) < pb_2 \}, \] where $d(x) = \int U(x, y, z) \dy\,\dz$ as defined above. Our use of $B_3$ here is novel, invoking simultaneous thresholds at very different scales (apart by a factor of nearly $p$), and it has not appeared in previous analyses in the graph setting~\cite{LZ17,BGLZ17}. The use of $B_3$ appears to be essential to our argument.

Define
\begin{equation} \label{eq:theta1} \theta_1 = \int_{B_1 \times B_3^2} U(x, y, z)^2 \dx\,\dy\,\dz \end{equation}
\begin{equation} \label{eq:theta2} \theta_2 = \int_{B_2^2 \times B_3} U(x, y, z)^2 \dx\,\dy\,\dz \end{equation}
\begin{equation} \label{eq:theta3} \theta_3 = \int_{B_3^3} U(x, y, z)^2 \dx\,\dy\,\dz \end{equation}
\begin{equation} \label{eq:eta} \eta = \int_{B_2 \times B_3^2} U(x, y, z)^2 \dx\,\dy\,\dz. \end{equation}

As in \cref{sec:lowerbound}, we can write $W = U + p$ and expand \[ t(H, W) = p^4 + 4p^3 t(E_1, U) + 6p^2 t(E_2, U) + 4p t(E_3, U) + t(H, U) \ge (1+\d)p^4 \] where $E_i$ is the subgraph of $H$ with exactly $i$ edges:
\[
\begin{tikzpicture}[baseline=(current bounding box.center),scale=.4, every node/.style={draw, circle, black, fill, inner sep = 0pt, minimum width = 3pt},font=\footnotesize]
	\node (1) at (0, 0) {};
	\node (2) at (2, 0) {};
	\node (3) at (0, 2) {};
	\node (4) at (1.3, 1.3) {};
	\node (5) at (intersection of 1--2 and 3--4) {};
	\node (6) at (intersection of 1--3 and 2--4) {};
	\draw (1)--(5);
	\node[draw=none,circle=none,fill=none,below left =3mm and 3mm of 5] {$E_1$};
\end{tikzpicture}
\qquad\qquad  
\begin{tikzpicture}[baseline=(current bounding box.center),scale=.4, every node/.style={draw, circle, black, fill, inner sep = 0pt, minimum width = 3pt},font=\footnotesize]
	\node (1) at (0, 0) {};
	\node (2) at (2, 0) {};
	\node (3) at (0, 2) {};
	\node (4) at (1.3, 1.3) {};
	\node (5) at (intersection of 1--2 and 3--4) {};
	\node (6) at (intersection of 1--3 and 2--4) {};
	\draw (5)--(1)--(6);
	\node[draw=none,circle=none,fill=none,below left =3mm and 3mm of 5] {$E_2$};
\end{tikzpicture}
\qquad\qquad  
\begin{tikzpicture}[baseline=(current bounding box.center),scale=.4, every node/.style={draw, circle, black, fill, inner sep = 0pt, minimum width = 3pt},font=\footnotesize]
	\node (1) at (0, 0) {};
	\node (2) at (2, 0) {};
	\node (3) at (0, 2) {};
	\node (4) at (1.3, 1.3) {};
	\node (5) at (intersection of 1--2 and 3--4) {};
	\node (6) at (intersection of 1--3 and 2--4) {};
	\draw (5)--(1)--(6)--(2);
	\node[draw=none,circle=none,fill=none,below left =3mm and 3mm of 5] {$E_3$};
\end{tikzpicture}
\]

We now analyze each piece separately. We extensively use \cref{lemma:graphonlemmaextend}.
When we use \cref{lemma:graphonlemmaextend}, we use the parameters $\eps = \frac{1}{100}$ (fixed small constant) and $\ka = 1/\log(1/p)$ (recall that in our notation $\ka$ depends implicitly on $p$).
The choice of $\ka $ is simply a natural explicit function that goes to $0$ slowly, and $1/\log(1/p)$ satisfies that property.

An additional tool we employ is the idea of \emph{adaptive thresholding}, which was introduced in \cite{BGLZ17}. When we are bounding the contribution of $t(H, U)$, we do not assume that the parameters $b_1, b_2$ are fixed. Instead, we allow them to depend on $U$. Therefore, our claim will instead say that there \emph{exists} a choice of $b_1$ and $b_2$ that allows us to get a sufficiently strong bound on $t(H, U).$ This is used in \cref{lemma:h} through the application of \cref{lemma:xb1} and \cref{lemma:xb2}.

For the analysis below, recall the following from \cref{lemma:graphonlemmaextend} and \cref{lemma:graphonlemmaextend2}. We have that
\begin{equation} \label{eq:lambound} \lambda(B_1) \ls \frac{p^2}{b_1} \text{ and } \lambda(B_2) \ls \frac{p \log(1/p)}{\ka b_2} \text{ and } \lambda(\oB_3) \ls \frac{p \log(1/p)}{\ka b_2}, \end{equation} where the final claim follows from $\lambda(\oB_3) = \lambda(B_1) + \lambda(B_2).$

\begin{lemma}[Analysis of $t(E_1, U)$]
\label{lemma:e1}
Let $U:V(G)^3 \to [0, 1]$ be a symmetric function satisfying \cref{eq:ipbound2}. We have that $t(E_1, U) = o(p).$
\end{lemma}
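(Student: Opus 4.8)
The plan is to recognize that $E_1$ is a single $3$-edge, so in the integral notation of \cref{sec:notation} we simply have $t(E_1,U)=\int U(x,y,z)\,\dx\,\dy\,\dz$, i.e.\ the $L^1$-norm of $U$ against the averaging measure. First I would record that we are operating under the standing assumptions of this section, namely $U(\xr)\in\{0\}\cup[\ka p,1-p]$ with $\ka=1/\log(1/p)$ and $\int I_p(p+U(x_{[r]}))\,\dx_{[r]}\ls p^{\D}\log(1/p)$ with $\D=2$; these are exactly the hypotheses needed to apply \cref{lemma:graphonlemmaextend2}. Then I would quote its conclusion \cref{eq:l1bound} to conclude
\[
t(E_1,U)=\int U(x,y,z)\,\dx\,\dy\,\dz\ \ls\ \frac{p^{\D}\log(1/p)}{\ka}\ =\ p^{2}\bigl(\log(1/p)\bigr)^{2},
\]
and since $p(\log(1/p))^{2}\to 0$ as $p\to 0$, the right-hand side is $o(p)$, which is the claim.

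The one point worth flagging is why the crude estimates are not enough and the thresholded $L^1$ bound is used. From $\int U(x_{[r]})^{2}\,\dx_{[r]}\ls p^{2}$ (equation \cref{eq:l2bound}, or directly from \cref{lemma:x2}), Cauchy--Schwarz only gives $\int U\ls p$, and convexity of $I_p$ together with \cref{lemma:x2} likewise only gives $\int U\ls p$ — neither is $o(p)$. The extra factor of $p(\log(1/p))^{2}$ saved here comes exactly from the pointwise bound $I_p(p+t)\gs\ka t$ for $t\in\{0\}\cup[\ka p,1-p]$, which is the content that the reduction \cref{lemma:redux} was arranged to supply. Consequently there is essentially no obstacle in this step: \cref{lemma:e1} is the easiest of the terms $t(E_1,U),t(E_2,U),t(E_3,U),t(H,U)$ in the expansion $t(H,W)=p^4+4p^3t(E_1,U)+6p^2t(E_2,U)+4p\,t(E_3,U)+t(H,U)$, and it serves mainly as a warm-up that confirms the thresholded $L^1$ estimate already kills the single-edge contribution outright.
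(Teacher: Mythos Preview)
Your proof is correct and is essentially identical to the paper's: both recognize $t(E_1,U)=\int U(x,y,z)\,\dx\,\dy\,\dz$, invoke \cref{eq:l1bound} from \cref{lemma:graphonlemmaextend2} with $\D=2$ and $\ka=1/\log(1/p)$, and conclude that $p^2(\log(1/p))^2=o(p)$. Your additional remark explaining why Cauchy--Schwarz or \cref{eq:l2bound} alone only yields $\ls p$ rather than $o(p)$ is accurate and a nice piece of exposition, though the paper's proof omits it.
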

\begin{proof}
By \cref{eq:l1bound} we have that \[ t(E_1, U) = \int U(x, y, z) \dx\,\dy\,\dz \ls \frac{p^2 \log(1/p)}{\ka } \ll p. \qedhere \] 
\end{proof}
\begin{lemma}[Analysis of $t(E_2, U)$]
\label{lemma:e2}
Let $U:V(G)^3 \to [0, 1]$ be a symmetric function satisfying \cref{eq:ipbound2}. 
For parameters $b_1$ and $b_2$ satisfying $p^{2\eps_0}\ll b_1,b_2 \ll p^{\eps_0}$ we have that $t(E_2, U) \le \theta_1 + o(p^2).$
\end{lemma}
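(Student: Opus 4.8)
The plan is to reduce $t(E_2,U)$ to a one–variable quantity and then threshold on $B_1$. Since any two edges of $H$ meet in exactly one vertex, $E_2$ (the two-edge subgraph of $H$) is a pair of triples sharing a single vertex, so expanding the definition and integrating out the two private coordinates of each edge gives
\[
t(E_2, U) = \int \paren{\int U(x,y,z)\,\dy\,\dz}^2 \dx = \int d(x)^2 \,\dx .
\]
I would then split this as $\int_{B_1} d(x)^2\,\dx + \int_{B_2\cup B_3} d(x)^2\,\dx$, and show the first piece is $\le \theta_1 + o(p^2)$ while the second is $o(p^2)$.

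For the second piece, on $B_2\cup B_3$ we have $d(x)<b_1$, so $\int_{B_2\cup B_3} d(x)^2\,\dx \le b_1\int d(x)\,\dx$. Applying the improved $L^1$ bound \cref{eq:l1bound} — which is available precisely because \cref{lemma:redux} lets us assume $U$ is supported on $\{0\}\cup[\ka p,1-p]$ — with $\D=2$ and $\ka = 1/\log(1/p)$, this is $\ls b_1\cdot p^2\log^2(1/p)$, which is $o(p^2)$ since $b_1\ll p^{\eps_0}\to 0$. This is the one step that genuinely exploits the stronger-than-clique $L^1$ estimate; no multiscale input enters here.

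For the first piece, I would apply Cauchy–Schwarz in the $(y,z)$ coordinates (whose total measure is $1$) to get $d(x)^2\le\int U(x,y,z)^2\,\dy\,\dz$, hence $\int_{B_1} d(x)^2\,\dx \le \int_{B_1\times V(G)^2} U(x,y,z)^2$. Decomposing each of the $y$ and $z$ coordinates as $B_3\sqcup\oB_3$ (where $\oB_3 = B_1\cup B_2$), the $B_1\times B_3\times B_3$ term is exactly $\theta_1$ by \cref{eq:theta1}; in every other term at least one of $y,z$ lies in $\oB_3$, and bounding $U^2\le 1$ and using the volume bounds \cref{eq:lambound} shows each such term is $\ls \lambda(B_1)\lambda(\oB_3)\ls \frac{p^2}{b_1}\cdot\frac{p\log^2(1/p)}{b_2} = \frac{p^3\log^2(1/p)}{b_1 b_2}$. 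Since $b_1 b_2\gg p^{4\eps_0}$ with $\eps_0=\tfrac1{100}$, this is $o(p^2)$. Adding the two pieces gives $t(E_2,U)\le\theta_1+o(p^2)$.

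The argument is short given \cref{lemma:graphonlemmaextend,lemma:graphonlemmaextend2}; the main thing requiring care — and the only real obstacle — is bookkeeping, i.e.\ verifying that every discarded term is genuinely $o(p^2)$, which is exactly what forces the admissible range $p^{2\eps_0}\ll b_1,b_2\ll p^{\eps_0}$ for the thresholds. The much larger separation of scales (the factor of roughly $p$ between $b_1$ and $pb_2$) plays no role in this lemma and only becomes essential when analyzing $t(H,U)$ itself.
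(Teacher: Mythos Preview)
Your proof is correct and follows essentially the same approach as the paper: split $\int d(x)^2\,\dx$ according to whether $x\in B_1$, bound the complement, and for $x\in B_1$ restrict the remaining coordinates to $B_3$ via the volume bounds before applying Cauchy--Schwarz/H\"older. The only minor deviations are that for the $x\notin B_1$ piece the paper invokes \cref{eq:d2bound1} directly (getting $\ls p^2 b_1$) rather than your $d(x)^2\le b_1 d(x)$ plus \cref{eq:l1bound}, and for the $x\in B_1$ piece the paper keeps the five-variable integral and restricts $y,z,y',z'$ to $B_3$ before applying \cref{cor:holder-bounded}, whereas you apply Cauchy--Schwarz first to reduce to $\int_{B_1\times V(G)^2}U^2$; these are cosmetic reorderings yielding the same bounds.
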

\begin{proof}
Recall that \begin{equation} \label{eq:e2} t(E_2, U) = \int U(x, y, z)U(x, y', z') \dx\,\dy\,\dz\,\dy'\,\dz' = \int d(x)^2 \dx. \end{equation}
We claim that all contribution to $t(E_2, U)$ where $x \not\in B_1$ is $o(p^2)$. Indeed, we have by \cref{eq:d2bound1} with the choice $b = b_1$ and $b' = 0$ that \[ \int_{\oB_1} d(x)^2 \dx \ls p^2b_1 \ll p^2 \] as $b_1 \ll p^{\eps_0}.$

Now we consider when $x \in B_1.$ We claim the contribution to $t(E_2,U)$ from the region where any of $y, z, y', z'$ are in $\oB_3$ has $o(p^2)$ contribution in \cref{eq:e2}.
Without loss of generality, say that $y$ lies in $\oB_3.$ These contributions are bounded by
\[ \int_{B_1 \times \oB_3 \times V(G)^3} U(x, y, z)U(x, y', z') \dx\,\dy\,\dz\,\dy'\,\dz' \le \lambda(B_1)\lambda(\oB_3) \ls \frac{p^2}{b_1} \cdot \frac{p \log(1/p)}{\ka b_2} \ll p^2, \] where we have used \cref{eq:lambound} and $b_1, b_2 \gg p^{2\eps_0}$.

Therefore
\begin{align*} t(E_2, U) &= \int_{B_1 \times B_3^4} U(x, y, z)U(x, y', z') \dx\,\dy\,\dz\,\dy'\,\dz' + o(p^2) 
\\ &\le \int_{B_1 \times B_3^2} U(x, y, z)^2 \dx\,\dy\,\dz + o(p^2) = \theta_1 + o(p^2), \end{align*} where we have applied \cref{cor:holder-bounded}.
\end{proof}
\begin{lemma}[Analysis of $t(E_3, U)$]
\label{lemma:t}
Let $U:V(G)^3 \to [0, 1]$ be a symmetric function satisfying \cref{eq:ipbound2}.
For parameters $b_1$ and $b_2$ satisfying $p^{2\eps_0}\ll b_1,b_2 \ll p^{\eps_0}$ we have that $t(E_3, U) \le \theta_2^{3/2} + o(p^3).$
\end{lemma}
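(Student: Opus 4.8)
The plan is to follow the pattern of the proofs of \cref{lemma:e1} and \cref{lemma:e2}: use the three thresholds $B_1,B_2,B_3$ to localize $t(E_3,U)$ to a single surviving region of its domain, and then apply \cref{cor:holder-bounded}. Recall that $E_3$ consists of three edges that pairwise meet in a single vertex, with the three meeting vertices distinct and of degree $2$; denote these by $a,b,c$ and the three remaining (degree-$1$) vertices by $u,v,w$, so that the edges are $\{a,b,u\}$, $\{a,c,v\}$, $\{b,c,w\}$ and $t(E_3,U)=\int U(a,b,u)U(a,c,v)U(b,c,w)$ over all six variables, each carrying the averaging measure. I would partition each of the six variables by which of $B_1,B_2,B_3$ it lies in; this splits $t(E_3,U)$ into finitely many pieces, and the claim is that every piece other than the one with $a,b,c\in B_2$ and $u,v,w\in B_3$ contributes $o(p^3)$.

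\emph{A meeting vertex in $B_1$ or $B_3$.} By the symmetry of $E_3$ among $a,b,c$ it suffices to treat $a$. If $a\in B_1$, bound $U(a,b,u)U(a,c,v)\le 1$ and integrate out $u,v$; the contribution is at most $\lambda(B_1)\,t(E_1,U)\ls (p^2/b_1)\cdot p^2\log^2(1/p)=o(p^3)$, using $\lambda(B_1)\ls p^2/b_1$ from \cref{eq:lambound}, the sharpened bound $t(E_1,U)\ls p^2\log^2(1/p)$ from \cref{eq:l1bound} (with $\ka=1/\log(1/p)$), and $b_1\gg p^{2\eps_0}$. If instead $a\in B_3$, bound $U(b,c,w)\le 1$ and integrate out $w$; the contribution is at most $\int_{B_3}d(a)^2\,\mathrm{d}a\le\big(\sup_{a\in B_3}d(a)\big)\,t(E_1,U)\le pb_2\cdot t(E_1,U)\ls p^3b_2\log^2(1/p)=o(p^3)$, since $b_2\log^2(1/p)=o(1)$.

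\emph{All meeting vertices in $B_2$.} After the previous step we may assume $a,b,c\in B_2$. If some degree-$1$ vertex, say $u$, lies in $\oB_3=B_1\cup B_2$, apply \cref{cor:holder-bounded} to $E_3$ (which has maximum degree $2$) with $B_a=B_b=B_c=B_2$, $B_u=\oB_3$, $B_v=B_w=V(G)$, to bound the contribution by
\[ \Big(\int_{B_2^2\times\oB_3}U^2\Big)^{1/2}\Big(\int_{B_2^2\times V(G)}U^2\Big)^{1/2}\Big(\int_{B_2^2\times V(G)}U^2\Big)^{1/2}. \]
The last two factors are $\le(\int U^2)^{1/2}\ls p$ by \cref{eq:l2bound}, and the first is $o(p)$ because $\int_{B_2^2\times\oB_3}U^2\le\lambda(B_1)\lambda(B_2)^2+\lambda(B_2)^3=o(p^2)$, which holds since $\lambda(B_1)\ls p^{2-2\eps_0}$, $\lambda(B_2)\ls p^{1-2\eps_0}\log^2(1/p)$, and $\eps_0$ is small. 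On the remaining region $a,b,c\in B_2$, $u,v,w\in B_3$, \cref{cor:holder-bounded} with $B_a=B_b=B_c=B_2$, $B_u=B_v=B_w=B_3$ bounds the contribution by $\big(\int_{B_2^2\times B_3}U^2\big)^{3/2}=\theta_2^{3/2}$ (one factor $\theta_2^{1/2}$ per edge, using \cref{eq:theta2}). Summing the finitely many pieces gives $t(E_3,U)\le\theta_2^{3/2}+o(p^3)$.

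The step I expect to be the crux is the one where all meeting vertices have been forced into $B_2$: one must recognize in advance that the ``all-medium'' cube $\int_{B_2^3}U^2$ is already negligible (it is $\le\lambda(B_2)^3$), since this is exactly what pushes the degree-$1$ vertices into $B_3$ and produces $\theta_2$ rather than a larger quantity. This dictates taking the middle threshold at scale $b_2\gg p^{2\eps_0}$ with $\eps_0$ small, and it is also why the argument needs the sharpened $L^1$-estimate $t(E_1,U)\ls p^2\log^2(1/p)$ from \cref{eq:l1bound} (available because of the reduction in \cref{lemma:redux}) rather than merely $t(E_1,U)=o(p)$.
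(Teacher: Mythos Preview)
Your proof is correct and follows essentially the same strategy as the paper: localize the three degree-$2$ vertices of $E_3$ to $B_2$ and the three degree-$1$ vertices to $B_3$, then apply \cref{cor:holder-bounded}. The only cosmetic difference is in the step eliminating $u\in\oB_3$: the paper bounds $U\le 1$ throughout to obtain $\lambda(B_2)^3\lambda(\oB_3)\ll p^3$ directly, whereas you first apply H\"older and then bound $\int_{B_2^2\times\oB_3}U^2\le\lambda(B_2)^2\lambda(\oB_3)$; both routes give the same conclusion from the same measure estimates in \cref{eq:lambound}.
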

\begin{proof}
We have that \begin{equation} \label{eq:t} t(E_3, U) = \int U(x, y, z')U(x, z, y')U(y, z, x') \dx\,\dy\,\dz\,\dx'\,\dy'\,\dz'. \end{equation}
We first claim that the contribution to \cref{eq:t} from the region with $x \in B_1$ is $o(p^3)$. The same would hold for $y, z$ by symmetry.
Indeed, the contribution from $x \in B_1$ is
\begin{align*} &\int_{B_1 \times V(G)^5} U(x, y, z')U(x, z, y')U(y, z, x') \dx\,\dy\,\dz\,\dx'\,\dy'\,\dz' \\ &\le \lambda(B_1)\int U(y, z, x') \dy\,\dz\,\dx' \ls \frac{p^2}{b_1} \cdot \frac{p^2 \log(1/p)}{\ka } = \frac{p^4 \log(1/p)}{b_1 \ka } \ll p^3, \end{align*} where we have used \cref{eq:lambound} and \cref{eq:l1bound}.

We now claim that the contribution to \cref{eq:t} from the region with $x \in B_3$ is $o(p^3)$. The same would hold for $y, z$ by symmetry. Indeed, the contribution from $x \in B_3$ is \begin{align*} &\int_{B_3 \times V(G)^5} U(x, y, z')U(x, z, y')U(y, z, x') \dx\,\dy\,\dz\,\dx'\,\dy'\,\dz' \\ &\le \int_{B_3} d(x)^2 \dx \ls \frac{p^3 b_2 \log(1/p)}{\ka } \ll p^3, \end{align*} where we have used \cref{eq:d2bound} for $b = pb_2$ and $b' = 0$.

Now, we consider the region where all of $x, y, z \in B_2$. In this case, we claim that the region where at least one of $x', y', z'$ lies in $\oB_3$ has contribution $o(p^3)$ to \cref{eq:t}. Without loss of generality, assume that $x' \in B_3.$ The contribution is
\begin{align*} &\int_{B_2^3 \times \oB_3 \times V(G)^2} U(x, y, z')U(x, z, y')U(y, z, x') \dx\,\dy\,\dz\,\dx'\,\dy'\,\dz' \\ &\le \lambda(B_2)^3 \lambda(\oB_3) \ls \left(\frac{p \log(1/p)}{\ka b_2} \right)^3 \cdot \frac{p \log(1/p)}{\ka b_2} \ll p^3, \end{align*} where we have used \cref{eq:lambound}.

Therefore, we have that
\begin{align*} 
t(E_3, U) &= \int_{B_2^3 \times B_3^3} U(x, y, z')U(x, y', z)U(x', y, z) \dx\,\dy\,\dz\,\dx'\,\dy'\,\dz' + o(p^3) \\ 
&\le \left(\int_{B_2^2 \times B_3} U(x, y, z)^2 \dx\,\dy\,\dz \right)^{3/2} + o(p^3) = \theta_2^{3/2} + o(p^3) \end{align*} 
after using \cref{cor:holder-bounded}.
\end{proof}

\begin{lemma}[Analysis of $t(H, U)$]
\label{lemma:h}
Let $U:V(G)^3 \to [0, 1]$ be a symmetric function satisfying \cref{eq:ipbound2}. There exist choices of $b_1$ and $b_2$ such that $p^{2\eps_0} \ll b_1, b_2 \ll p^{\eps_0}$ and \[ t(H, U) \le 3\theta_1^2 + 3\theta_2^2 + \theta_3^2 + 3\eta^2 + o(p^4). \]
\end{lemma}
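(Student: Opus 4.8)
The plan is to run the same ``partition the vertices of $H$ according to their degree class and split the integral'' scheme used for $t(E_2,U)$ and $t(E_3,U)$, but now with \emph{adaptive} thresholds. Partition $V(G) = B_1 \sqcup B_2 \sqcup B_3$ (with $\oB_3 = B_1\cup B_2$), and for an assignment $\phi\colon V(H)\to\{1,2,3\}$ write $t_\phi$ for the contribution of the region $\{x_v\in B_{\phi(v)} : v\in V(H)\}$, so that $t(H,U) = \sum_\phi t_\phi$ is a sum of $3^6 = O(1)$ terms. I will single out ten ``main'' assignments whose contributions sum to the claimed right-hand side, and show that all remaining $t_\phi$ are $o(p^4)$ provided $b_1,b_2$ are chosen (depending on $U$) via the adaptive-thresholding lemmas \cref{lemma:xb1} and \cref{lemma:xb2}.

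The combinatorial input is that $H$ is $2$-regular on six vertices, any two of its four edges meet in exactly one vertex, and consequently each of its three pairs of non-adjacent (``antipodal'') vertices meets every edge in exactly one vertex. Call $\phi$ \emph{main} if all three antipodal pairs are monochromatic and the multiset of their three colours is one of $\{1,3,3\}$, $\{2,2,3\}$, $\{2,3,3\}$, $\{3,3,3\}$. In each of these cases every edge of $H$ has coordinate type $(B_1,B_3,B_3)$, $(B_2,B_2,B_3)$, $(B_2,B_3,B_3)$, or $(B_3,B_3,B_3)$, so \cref{cor:holder-bounded} with $\Delta = 2$ gives $t_\phi\le\theta_1^2$, $\theta_2^2$, $\eta^2$, or $\theta_3^2$ respectively (using the symmetry of $U$ and the definitions \cref{eq:theta1}--\cref{eq:eta}). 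There are $3$ choices of the distinguished antipodal pair in each of the first three cases and a single all-$B_3$ assignment, so these main contributions total exactly $3\theta_1^2 + 3\theta_2^2 + 3\eta^2 + \theta_3^2$.

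It remains to bound the non-main $t_\phi$. The first family is those $\phi$ with all antipodal pairs monochromatic but colour multiset among the other six possibilities (all-$B_1$, all-$B_2$, $\{1,1,3\}$, etc.): then every edge has a type in which at least two coordinates lie in $\oB_3$ with one of them in $B_1$, or all three coordinates lie in $\oB_3$, or two coordinates lie in $B_1$. Using $U\le 1$ together with $\lambda(B_1)\ls p^2/b_1$ and $\lambda(B_2),\lambda(\oB_3)\ls p\log^2(1/p)/b_2$ (from \cref{lemma:graphonlemmaextend,lemma:graphonlemmaextend2} with $\ka = 1/\log(1/p)$), and with $\int U^2\ls p^2$ from \cref{eq:l2bound}, each of the four H\"older factors is $o(p)$ once $\eps_0$ is small enough, so $t_\phi = o(p^4)$. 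The second, harder family is those $\phi$ in which some antipodal pair is split. If the split pair has a vertex in $B_1$, that vertex behaves like a hub: bounding by $1$ the two edges through it reduces $t_\phi$ to an essentially all-$B_3$ configuration, from which integrating out the remaining $B_3$-vertices gains a factor $\le d(x)<pb_2$ per edge, yielding $t_\phi\ls\lambda(B_1)(pb_2)^2\ls p^4 b_2^2/b_1 = o(p^4)$. If the split pair has a vertex in $B_2$ (possibly together with further $B_2$-coordinates elsewhere), the analogous estimates only produce bounds of shape $\lambda(B_2)(pb_2)^2$, $\int_{B_3}d(x)^2\,\dx$, or $\lambda(B_1)b_1^2\lambda(B_2)$, which for fixed $b_1,b_2$ are of size $\Theta(p^{2+O(\eps_0)})$ or $p^4\log^{O(1)}(1/p)$ — \emph{not} automatically $o(p^4)$. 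This is exactly the point at which adaptive thresholding is used: \cref{lemma:xb1} and \cref{lemma:xb2} let us select $b_1$ and $b_2$ in $(p^{2\eps_0},p^{\eps_0})$, depending on $U$, so that the degree-mass of $B_2$ and of the two threshold shells is negligible, which forces each of these finitely many cross-type contributions to be $o(p^4)$. Summing the $O(1)$ error terms and the ten main terms gives the lemma.

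The main obstacle is this last step — pinning down the correct smallness conditions on $b_1,b_2$ (i.e.\ the content of \cref{lemma:xb1} and \cref{lemma:xb2}) and verifying that one choice of $(b_1,b_2)$ simultaneously kills every split-pair cross term. Since there are only finitely many such $\phi$ and a range of $\Theta(\log(1/p))$ dyadic scales available for each threshold, a pigeonhole over scales should suffice, but the bookkeeping for configurations that straddle \emph{both} thresholds at once is the delicate part.
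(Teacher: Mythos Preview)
Your approach is correct and matches the paper's: partition by which $B_i$ each vertex lies in, extract the same ten main configurations via \cref{cor:holder-bounded}, and kill the split-pair cross terms with the adaptive thresholds of \cref{lemma:xb1} and \cref{lemma:xb2}. Your closing worry is resolved by choosing the thresholds \emph{sequentially} (first $b_1$ via \cref{lemma:xb1}, then $b_2$ via \cref{lemma:xb2} with $b_1$ fixed), with the automorphisms of $H$ ensuring one choice works for all three antipodal pairs; one caveat is that your direct bound $\lambda(B_1)(pb_2)^2$ for a ``$B_1$-split'' only applies when the antipodal partner lies in $B_3$ --- the $(B_1,B_2)$ split is precisely the case that forces \cref{lemma:xb1}, as you implicitly acknowledge in your second split-pair case.
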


Recall that \begin{equation} \label{eq:h} t(H, U) = \int U(x, y, z')U(x, z, y')U(y, z, x')U(x', y', z') \dx\,\dx'\,\dy\,\dy'\,\dz\,\dz'. \end{equation}
Our proof strategy is to bound the contribution of the integral depending on whether which of the $B_i$ each of $x, y, z, x', y', z'$ is in.

Our first claim is that there exists a choice of threshold $b_1$ in the definition of $B_1$ such that the contribution to \cref{eq:h} from the region with $x \in B_1$ and $x' \in \oB_1$ is $o(p^4).$ The same holds for the regions $y \in B_1, y' \in \oB_1$ and $z \in B_1, z' \in \oB_1$ by symmetry. We can show this via an adaptive thresholding argument, as done in \cite{BGLZ17}.

\begin{lemma}
\label{lemma:xb1}
Let $U:V(G)^3 \to [0, 1]$ be a symmetric function satisfying \cref{eq:ipbound2}. There is a choice of a parameter $b_1$ (which possibly depends on $U$) satisfying $p^{2\eps_0} \ll b_1 \ll p^{\eps_0}$ for some sufficiently small constant $\eps_0$ (say $\eps_0 = \frac{1}{100}$) such that if we define $B_1 = \{ x \in V(G) : d(x) \ge b_1 \},$ then \[ \int_{B_1 \times \oB_1 \times V(G)^4} U(x, y, z')U(x, z, y')U(y, z, x')U(x', y', z') \dx\,\dx'\,\dy\,\dy'\,\dz\,\dz' = o(p^4) .\]
\end{lemma}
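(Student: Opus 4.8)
The plan is an adaptive dyadic thresholding argument in the spirit of \cite{BGLZ17}. Fix $\eps_0=\tfrac{1}{100}$ and $\ka:=1/\log(1/p)$; by \cref{lemma:redux} we may assume throughout that $U(\xr)\in\{0\}\cup[\ka p,1-p]$, so that \cref{lemma:graphonlemmaextend} (with $\eps=\eps_0$) and \cref{lemma:graphonlemmaextend2} both apply at every scale in $[p^{2\eps_0},p^{\eps_0}]$. Consider the $L\asymp\log(1/p)$ dyadic scales $b^{(j)}:=p^{\eps_0}2^{-j}$ for $0\le j\le L$, all lying in that window, and set $B^{(j)}:=\{x:d(x)\ge b^{(j)}\}$, $\oB^{(j)}:=V(G)\setminus B^{(j)}$, and $\Sigma_i:=\{x:b^{(i)}\le d(x)<b^{(i-1)}\}$ for the (pairwise disjoint) degree shells. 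Write $I(b)$ for the integral in the statement with $B_1=\{d(x)\ge b\}$. The aim is to prove $\sum_{j=1}^{L}I(b^{(j)})=o\big(p^4\log(1/p)\big)$; averaging over the $L$ scales then produces some $b^{(j)}\in(p^{2\eps_0},p^{\eps_0})$ with $I(b^{(j)})=o(p^4)$, which is the statement.

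For a fixed scale $b=b^{(j)}$ I would decompose the domain of $I(b)$ by recording which of the six vertices $x,x',y,y',z,z'$ lie in $B^{(j)}$. The dominant piece is the one where only $x\in B^{(j)}$: bounding $U(x,y,z')U(x,z,y')\le 1$ and integrating the two edges through $x'$ freely gives $I(b)\ls\lambda(B^{(j)})\int_{\oB^{(j)}}d(x')^2\,\dx'$, which by \cref{eq:lambound} and \cref{eq:d2bound1} is $\ls(p^2/b^{(j)})\cdot(p^2b^{(j)})=p^4$. For the remaining pieces --- where one or more of $y,z,y',z'$, or $x'$ itself, lies in $B^{(j)}$ --- one keeps at least one $U$-factor through a low-degree vertex and applies \cref{cor:holder-bounded} edge by edge, using $\int U(x,y,z)^2\,\dx\,\dy\,\dz\ls p^2$ from \cref{eq:l2bound}, $\int d(x)\,\dx\ls p^2\log(1/p)/\ka$ from \cref{eq:l1bound}, and $\lambda(B^{(j)})\ls p^2/b^{(j)}$; each such piece is $\ls p^4$ up to a bounded power of $\log(1/p)$ (the trivial bound $\lambda(B^{(j)})^2$ does \emph{not} suffice here, since $\lambda(B^{(j)})^2$ can be as large as $p^{4-4\eps_0}\gg p^4$). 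Thus a single scale only gives $I(b^{(j)})\ls p^4(\log(1/p))^{O(1)}$, and the dominant bound is manifestly scale-invariant, so the saving has to come from varying the scale.

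To extract the decay, sum over scales and play the two factors of the dominant bound against one another shell by shell. With $m_i:=\lambda(\Sigma_i)$ one has $\lambda(B^{(j)})=\sum_{k\le j}m_k$, while \cref{eq:d2bound1} also gives $\int_{\oB^{(j)}}d(x')^2\,\dx'\ls p^2b^{(j)}$, so $\sum_jI(b^{(j)})\ls p^2\sum_j\lambda(B^{(j)})b^{(j)}$, which one reorganises as a sum over degree shells. The two inputs are the ``degree budgets''
\[
\sum_i(b^{(i)})^2m_i\ \le\ \int d(x)^2\,\dx\ \ls\ p^2
\qquad\text{and}\qquad
\sum_ib^{(i)}m_i\ \le\ \int d(x)\,\dx\ \ls\ \frac{p^2\log(1/p)}{\ka},
\]
from \cref{eq:l2bound,eq:l1bound}, together with the geometric decay $b^{(i)}=p^{\eps_0}2^{-i}$, which keeps the number $L$ of scales from entering with its full weight. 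The same reorganisation must also be carried out for the non-dominant pieces, each of which carries an extra factor of $\lambda(B^{(j)})$ and is hence more forgiving; the target is to push the total below $p^4\log(1/p)$.

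The main obstacle is precisely this last step. Since the per-scale estimate is genuinely tight at the order $p^4$, the saving cannot come from any single scale; it must be teased out of the interplay between scales via the degree budgets, and the delicate point is to arrange the shell sum so that the logarithm from the $L\asymp\log(1/p)$ candidate scales --- together with the further polylogarithmic factors forced on us by working over the truncated range $U\in\{0\}\cup[\ka p,1-p]$ (needed for \cref{eq:l1bound,eq:lam2,eq:d2bound}) --- is cancelled rather than merely absorbed; the window exponents $2\eps_0$ and $\eps_0$ may be nudged slightly apart to create a little room. The structural feature of $H$ that makes this possible is that the low-degree vertex $x'$ lies in two edges each meeting two otherwise-unconstrained vertices, so that both the dominant bound and the ``several vertices in $B^{(j)}$'' pieces can realise a genuine $\int d(x')^2$ (resp.\ $\int d(x')$) rather than a mere measure factor.
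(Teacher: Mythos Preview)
Your approach has a genuine gap that you yourself flag: the dyadic averaging does not close. With $L\asymp\log(1/p)$ scales and the per-scale bound $I(b^{(j)})\ls\lambda(B^{(j)})\int_{\oB^{(j)}}d(x')^2\,\dx'\ls p^4$, the sum is only $\ls p^4L$, and reorganising via $\sum_j\lambda(B^{(j)})b^{(j)}\asymp\sum_k m_kb^{(k)}\le\int d(x)\,\dx$ together with \cref{eq:l1bound} yields $\sum_jI(b^{(j)})\ls p^4(\log(1/p))^2$, hence after averaging only $I\ls p^4\log(1/p)$. The $L^2$ budget $\sum_k m_k(b^{(k)})^2\ls p^2$ does no better. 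The structural problem is that adjacent dyadic scales differ by a factor of $2$, so the product $\lambda(B^{(j)})\cdot p^2b^{(j)}$ is genuinely scale-invariant and no saving can be squeezed out of the shell reorganisation alone.

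The paper's argument is different in a way that matters: it uses a \emph{constant} number $M=M(c)$ of thresholds $p^{2\eps_0}\ll b_1^{(1)}\ll\cdots\ll b_1^{(M)}\ll p^{\eps_0}$ that are \emph{well separated} (each $\ll$ the next), and proceeds by pigeonhole rather than averaging. The well-separation is exactly what makes the cross term small: for consecutive thresholds,
\[
\int_{B_1^{(i)}\times\oB_1^{(i-1)}\times V(G)^4}\!\!\cdots\ \le\ \lambda(B_1^{(i)})\int_{\oB_1^{(i-1)}}d(x')^2\,\dx'\ \ls\ \frac{p^2}{b_1^{(i)}}\cdot p^2b_1^{(i-1)}\ =\ \frac{b_1^{(i-1)}}{b_1^{(i)}}\,p^4\ =\ o(p^4),
\]
so if every $S_i\ge cp^4$ then each shell integral over $B_1^{(i)}\times(B_1^{(i-1)}\setminus B_1^{(i)})$ is at least $\tfrac{c}{2}p^4$. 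These shells are disjoint in the $x'$-coordinate, and their union sits inside the full $t(H,U)\le Cp^4$; summing over $i$ gives $Cp^4\ge \tfrac{Mc}{2}p^4$, a contradiction for $M>2C/c$. The two ingredients your attempt is missing are (i) separation $b^{(i-1)}/b^{(i)}=o(1)$ rather than $1/2$, and (ii) the use of disjointness against the global cap $t(H,U)\ls p^4$ instead of a degree-budget sum.
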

\begin{proof}
Our proof is via an adaptive thresholding argument.
First, note that by \cref{cor:holder-bounded} we have that \[ t(H, U) \le \left(\int U(x, y, z)^2 \dx\,\dy\,\dz \right)^2 \ls p^4. \] Now, let $C$ be a constant so that $t(H, U) \le Cp^4$. It is sufficient to show that for any constant $c > 0$ that there is some choice of $b_1$ (which possibly depends on $U$) such that \[ \int_{B_1 \times \oB_1 \times V(G)^4} U(x, y, z')U(x, z, y')U(y, z, x')U(x', y', z') \dx\,\dx'\,\dy\,\dy'\,\dz\,\dz' \le cp^4. \] 

Set $M = \frac{3C}{c}$. Now, choose $b_1^{(1)}, b_1^{(2)}, \dots, b_1^{(M)}$ such that \[ p^{2\eps_0} \ll b_1^{(1)} \ll b_1^{(2)} \ll \dots \ll b_1^{(M)} \ll p^{\eps_0}. \]
Define \[ B_1^{(i)} = \{ x \in V(G) : d(x) \ge b_1^{(i)} \}. \]
Define the quantity \[ S_i = \int_{B_1^{(i)} \times \nbone \times V(G)^4} U(x, y, z')U(x, z, y')U(y, z, x')U(x', y', z') \dx\,\dx'\,\dy\,\dy'\,\dz\,\dz' \text{ for } 1 \le i \le M. \] We want to show that there is some $1 \le i \le M$ such that $S_i \le cp^4.$ Assume for contradiction that $S_i \ge cp^4$ for all $1 \le i \le M.$ Now, note that
\begin{align*} &\int_{B_1^{(i+1)} \times \nbone \times V(G)^4} U(x, y, z')U(x, z, y')U(y, z, x')U(x', y', z') \dx\,\dx'\,\dy\,\dy'\,\dz\,\dz' 
\\ &\le \lambda(B_1^{(i+1)}) \int_{\nbone} d(x')^2 \dx' \ls \frac{p^2}{b_1^{(i+1)}} \cdot b_1^{(i)} p^2 = \frac{b_1^{(i)}}{b_1^{(i+1)}} \cdot p^4 \ll p^4 \end{align*} by \cref{eq:lambound}, \cref{eq:d2bound1}, and $b_1^{(i)} \ll b_1^{(i+1)}$ as chosen above.

This gives us that \begin{align*} &\int_{B_1^{(i)} \times \left(B_1^{(i-1)} \bs B_1^{(i)}\right) \times V(G)^4} U(x, y, z')U(x, z, y')U(y, z, x')U(x', y', z') \dx\,\dx'\,\dy\,\dy'\,\dz\,\dz' \\ &\ge S_i - o(p^4) \ge \frac{1}{2}cp^4. \end{align*}
Now, as all the sets $B_1^{(i)} \times \left(B_1^{(i-1)} \bs B_1^{(i)}\right) \times V(G)^4$ are disjoint, we immediately get by summing over $1 \le i \le M$ that $Cp^4 \ge t(H, U) \ge \frac{1}{2}Mcp^4$, a contradiction to our choice of $M$.
\end{proof}

Now, fix the choice $b_1$ to satisfy the conditions of \cref{lemma:xb1}. Now, we bound the contribution from the region where $x \in B_1.$
\begin{lemma}
\label{lemma:b1contrib}
Consider a choice of $b_1$ such that $p^{2\eps_0} \ll b_1 \ll p^{\eps_0}$ and \[ \int_{B_1 \times \oB_1 \times V(G)^4} U(x, y, z')U(x, z, y')U(y, z, x')U(x', y', z') \dx\,\dx'\,\dy\,\dy'\,\dz\,\dz' = o(p^4) \] for $B_1 = \{ x \in V(G) : d(x) \ge b_1 \}.$ The contribution to the integral in \cref{eq:h} of the region where at least one of $x, x', y, y', z, z'$ lies in $B_1$ is at most $3\theta_1^2 + o(p^4)$.
\end{lemma}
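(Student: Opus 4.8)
The plan is to use the symmetry of $H$ to reduce the region ``some variable lies in $B_1$'' to the three regions in which a pair of ``antipodal'' vertices \emph{both} lie in $B_1$, and then to estimate each of these by generalized H\"older. First I would record that $H$ is vertex-transitive and that its three antipodal pairs $\{x,x'\},\{y,y'\},\{z,z'\}$ --- the pairs of vertices lying in no common edge --- are permuted by $\mathrm{Aut}(H)$. Consequently the hypothesis of the lemma (that the $x\in B_1,\ x'\in\oB_1$ contribution is $o(p^4)$) upgrades, after relabelling by automorphisms, to the statement that the contribution of the region $\{v\in B_1,\ v^{\mathrm{op}}\in\oB_1\}$ to \cref{eq:h} is $o(p^4)$ for each of the six vertices $v$ of $H$, where $v^{\mathrm{op}}$ denotes its antipode. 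Then I apply the pointwise bound $\mathbf 1[\exists\,v\in B_1]\le \sum_{P}\mathbf 1[\text{both vertices of }P\text{ lie in }B_1]+\sum_v\mathbf 1[v\in B_1,\ v^{\mathrm{op}}\in\oB_1]$, the first sum running over the three antipodal pairs $P$: indeed, if some vertex lies in $B_1$, then either its antipode does too (producing a pair) or it does not (producing a term of the second type). Integrating \cref{eq:h} against this inequality, the six ``second-type'' terms are $o(p^4)$ by the previous step, so it remains only to bound the contribution of the region $\{x,x'\in B_1\}$, the other two pairs being identical by symmetry.

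Here the structural point is that $x$ and $x'$ together meet each of the four edges of $H$ exactly once: $x\in\{x,y,z'\},\{x,z,y'\}$ and $x'\in\{y,z,x'\},\{x',y',z'\}$. Within $\{x,x'\in B_1\}$ I would first discard the subregion in which any one of $y,z,y',z'$ lies in $\oB_3$: bounding all four $U$-factors by $1$, such a subregion has measure at most $\lambda(B_1)^2\lambda(\oB_3)\ls p^5\log^2(1/p)/(b_1^2 b_2)\ll p^{5-6\eps_0}\log^2(1/p)=o(p^4)$, using \cref{eq:lambound} (with $\ka=1/\log(1/p)$), $b_1,b_2\gg p^{2\eps_0}$, and $\eps_0=\tfrac1{100}$. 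What remains is the region $x,x'\in B_1$ and $y,z,y',z'\in B_3$, where every edge of $H$ has exactly one endpoint in $B_1$ and two in $B_3$; applying \cref{cor:holder-bounded} with $\Delta=2$ then bounds its contribution by $\prod_{e\in E(H)}\bigl(\int_{B_1\times B_3\times B_3}U^2\bigr)^{1/2}=\theta_1^{4/2}=\theta_1^2$ (using symmetry of $U$ to identify each factor with $\theta_1^{1/2}$). Summing over the three antipodal pairs yields the claimed bound $3\theta_1^2+o(p^4)$.

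The volume estimates showing the ``off-diagonal'' subregions are negligible are routine given \cref{eq:lambound}, and I do not expect a genuine obstacle beyond the bookkeeping. The one place that needs care is the covering inequality --- making sure each of the three pair-regions is counted with coefficient exactly $1$ and not $2$ --- together with the observation that each antipodal pair hits every edge of $H$ exactly once, which is precisely what makes the H\"older product collapse to $\theta_1^2$ rather than something larger (as would happen for a non-antipodal pair of vertices, whose two vertices share an edge and hence cover only three of the four edges).
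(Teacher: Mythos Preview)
Your proposal is correct and follows essentially the same approach as the paper's proof: reduce via the hypothesis (and the $\mathrm{Aut}(H)$-symmetry permuting the antipodal pairs) to the regions where both members of an antipodal pair lie in $B_1$, discard the subregion where some remaining variable falls in $\oB_3$ via the volume bound $\lambda(B_1)^2\lambda(\oB_3)=o(p^4)$, and then apply \cref{cor:holder-bounded} on $B_1^2\times B_3^4$ to obtain $\theta_1^2$ per pair. Your explicit covering inequality and the remark that each antipodal pair meets every edge of $H$ exactly once make transparent two points the paper leaves implicit, but the argument is otherwise identical.
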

\begin{proof}
By our choice of $b_1$ and \cref{lemma:xb1}, we only consider the region where $x' \in B_1$ also, as the region where $x \in B_1, x' \in \oB_1$ has contribution $o(p^4)$ to \cref{eq:h}.
We now claim that the contribution to \cref{eq:h} from the region where any one of $y, z, y', z' \in \oB_3$ is $o(p^4).$ To show this, without loss of generality say $y \in \oB_3$. We have that the contribution is
\begin{align*} &\int_{B_1^2 \times \oB_3 \times V(G)^3} U(x, y, z')U(x, z, y')U(y, z, x')U(x', y', z') \dx\,\dx'\,\dy\,\dy'\,\dz\,\dz' \\
&\le \lambda(B_1)^2 \lambda(\oB_3) \ls \left(\frac{p^2}{b_1} \right)^2 \cdot \frac{p \log(1/p)}{b_2 \ka} \ll p^4, \end{align*} where we have used \cref{eq:lambound}. Therefore, the contribution of the region $x \in B_1, x' \in B_1$ to \cref{eq:h} is 
\begin{align*} &\int_{B_1^2 \times B_3^4} U(x, y, z')U(x, z, y')U(y, z, x')U(x', y', z') \dx\,\dx'\,\dy\,\dy'\,\dz\,\dz' + o(p^4) 
\\ &\le \left(\int_{B_1 \times B_3^2} U(x, y, z)^2 \dx\,\dy\,\dz \right)^2 + o(p^4) = \theta_1^2 + o(p^4) \end{align*} by \cref{cor:holder-bounded}. We get a total of $3\theta_1^2 + o(p^4)$ from the symmetric cases ($y, y' \in B_1$ and $z, z' \in B_1$).
\end{proof}

From now on, we can restrict ourselves to the region where none of $x, x', y, y', z, z'$ lies in $B_1$, as we have already bounded that contribution. We first focus on the cases where all $x, x', y, y', z, z'$ lie in $B_3.$
\begin{lemma}
\label{lemma:allb3}
The contribution to the integral in \cref{eq:h} of the region where all of $x, x', y, y', z, z'$ lie in $B_3$ is at most $\theta_3^2$.
\end{lemma}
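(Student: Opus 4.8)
The plan is to obtain this bound as a direct application of the generalized H\"older inequality for bounded-degree hypergraphs, \cref{cor:holder-bounded}: in the all-$B_3$ region there are no negligible cross terms to peel off first, so H\"older applies immediately. First I would record that the $3$-graph $H$ of \cref{fig:graph1} is $2$-regular --- each of its six vertices lies in exactly two of its four edges --- so $\Delta(H) = 2$, which matches the exponent $2$ used in the definitions \cref{eq:theta3} of $\theta_3$. The quantity to be bounded is precisely
\[
\int_{B_3^6} U(x, y, z')\,U(x, z, y')\,U(y, z, x')\,U(x', y', z') \dx\,\dx'\,\dy\,\dy'\,\dz\,\dz',
\]
namely the integrand of \cref{eq:h} restricted to the region where every one of the six variables is constrained to lie in $B_3$.

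Then I would invoke \cref{cor:holder-bounded} with $\Omega = V(G)$ (endowed with the averaging measure), with $B_v = B_3$ for every $v \in V(H)$, and with the symmetric nonnegative function $U = W - p$ (recall $U \ge 0$ here, as we have reduced to $W \ge p$), which yields
\[
\int_{B_3^6} \prod_{S \in E(H)} U(x_S) \dx_{V(H)} \le \prod_{S \in E(H)} \paren{ \int_{B_3^3} U(x_S)^2 \dx_S }^{1/2}.
\]
Since $U$ is symmetric and every coordinate is integrated over the \emph{same} set $B_3$, each of the four factors on the right-hand side equals $\paren{\int_{B_3^3} U(x, y, z)^2 \dx\,\dy\,\dz}^{1/2} = \theta_3^{1/2}$ by \cref{eq:theta3}; hence the right-hand side is $\theta_3^{2}$, which is the claimed bound.

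I do not expect any real obstacle. The only points that need checking are that $\Delta(H) = 2$, so the H\"older exponent is exactly the $2$ used in defining $\theta_3$, and that all six variables are restricted to the identical set $B_3$, which is what forces every edge of $H$ to contribute the same factor $\theta_3^{1/2}$. This should be contrasted with the mixed-region lemmas such as \cref{lemma:b1contrib}, where different variables live in different sets and one must first discard the negligible ``cross'' contributions (using the measure bounds in \cref{eq:lambound}) before H\"older can be applied cleanly; in the present all-$B_3$ region no such preprocessing is required.
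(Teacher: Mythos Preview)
Your proposal is correct and follows essentially the same approach as the paper: both apply \cref{cor:holder-bounded} with $\Delta=2$ and $B_v=B_3$ for every vertex, obtaining $\bigl(\int_{B_3^3} U^2\bigr)^{4/2}=\theta_3^2$. Your write-up simply makes the verification of $\Delta(H)=2$ and the symmetry argument for the four edge-factors more explicit than the paper does.
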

\begin{proof}
If all $x, x', y, y', z, z' \in B_3$, the contribution to \cref{eq:h} is
\begin{align*} &\int_{B_3^6} U(x, y, z')U(x, z, y')U(y, z, x')U(x', y', z') \dx\,\dx'\,\dy\,\dy'\,\dz\,\dz' 
\\ &\le \left(\int_{B_3^3} U(x, y, z)^2 \dx\,\dy\,\dz \right)^2 = \theta_3^2 \end{align*} by \cref{cor:holder-bounded}.
\end{proof}
Finally, we bound the contribution in the case where none of $x, x', y, y', z, z'$ are in $B_1$ and not all of $x, x', y, y', z, z'$ are in $B_3.$
Therefore, without loss of generality assume that $x \in B_2.$ We first argue there is a choice of $b_2$ such that the region where $x \in B_2$ and $x' \in B_3$, has contribution $o(p^4)$ to \cref{eq:h}.
\begin{lemma}
\label{lemma:xb2}
Let $U:V(G)^3 \to [0, 1]$ be a symmetric function satisfying \cref{eq:ipbound2}, and let $b_1$ be a parameter satisfying $p^{2\eps_0} \ll b_1 \ll p^{\eps_0}$ for some sufficiently small constant $\eps_0$ (say $\eps_0 = \frac{1}{100}$). There is a choice of a parameter $b_2$ (which possibly depends on $U$) satisfying $p^{2\eps_0} \ll b_2 \ll p^{\eps_0}$ such that if we define \[ B_2 = \{ x \in V(G) : b_1 \ge d(x) \ge pb_2 \} \text{ and } B_3 = \{x : V(G) : pb_2 > d(x) \}, \] then \[ \int_{B_2 \times B_3 \times V(G)^4} U(x, y, z')U(x, z, y')U(y, z, x')U(x', y', z') \dx\,\dx'\,\dy\,\dy'\,\dz\,\dz' = o(p^4) .\]
\end{lemma}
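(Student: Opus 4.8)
The plan is to reuse the adaptive (telescoping) thresholding scheme from the proof of \cref{lemma:xb1}, but now selecting the \emph{inner} threshold $b_2$ and operating below scale $p$, where only the weaker estimates of \cref{lemma:graphonlemmaextend2} are available. As in \cref{lemma:xb1}, it suffices to show that for every fixed $c>0$ there is an admissible choice of $b_2$ (possibly depending on $U$) for which the integral in the statement is at most $cp^4$; letting $c\to 0$ sufficiently slowly then yields the $o(p^4)$ conclusion. First I would record that $t(H,U)\le\bigl(\int U(x,y,z)^2\,\dx\,\dy\,\dz\bigr)^2\ls p^4$ by \cref{cor:holder-bounded} and \eqref{eq:l2bound}, and fix a constant $C$ with $t(H,U)\le Cp^4$.

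Next I would argue by contradiction: suppose that for some fixed $c>0$ the integral exceeds $cp^4$ for \emph{every} admissible $b_2$. Set $M=\ceil{3C/c}+1$ and choose widely spaced thresholds $p^{2\eps_0}\ll b_2^{(1)}\ll\cdots\ll b_2^{(M)}\ll p^{\eps_0}$ with $b_2^{(j+1)}/b_2^{(j)}\gg(\log(1/p))^{4}$ for each $j$; this is possible since the admissible window for $b_2$ has multiplicative width $p^{-\eps_0}$, which dominates $(\log(1/p))^{4M}$ for fixed $M$. Writing $B_2^{(i)}=\{x:b_1\ge d(x)\ge pb_2^{(i)}\}$, $B_3^{(i)}=\{x:d(x)<pb_2^{(i)}\}$, and letting $S_i$ be the integral in the statement with $B_2,B_3$ replaced by $B_2^{(i)},B_3^{(i)}$, the hypothesis gives $S_i\ge cp^4$ for every $i$.

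The key step is the ``cross-term'' estimate: for each $i$,
\[
\int_{B_2^{(i+1)}\times B_3^{(i)}\times V(G)^4}U(x,y,z')U(x,z,y')U(y,z,x')U(x',y',z')\,\dx\,\dx'\,\dy\,\dy'\,\dz\,\dz'\ \le\ \lambda\bigl(B_2^{(i+1)}\bigr)\int_{B_3^{(i)}}d(x')^2\,\dx'\ \ll\ p^4 .
\]
To see this I would bound $U(x,y,z')\le 1$ and $U(x,z,y')\le 1$ and integrate out $y,z,y',z'$, turning $U(y,z,x')U(x',y',z')$ into $d(x')^2$; then \eqref{eq:lam2} gives $\lambda(B_2^{(i+1)})\ls\frac{p\log(1/p)}{b_2^{(i+1)}\ka}$ while \eqref{eq:d2bound}, applied with lower threshold $0$ and upper threshold $pb_2^{(i)}$, gives $\int_{B_3^{(i)}}d(x')^2\,\dx'\ls\frac{p^3 b_2^{(i)}\log(1/p)}{\ka}$, whose product is $\ls\frac{b_2^{(i)}}{b_2^{(i+1)}}(\log(1/p))^4 p^4\ll p^4$ using $\ka=1/\log(1/p)$ and the wide spacing. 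Since $B_2^{(i+1)}\subseteq B_2^{(i)}$, splitting the $x$-range of $S_i$ as $B_2^{(i)}=B_2^{(i+1)}\sqcup(B_2^{(i)}\setminus B_2^{(i+1)})$ and discarding the negligible first piece shows that the same integrand, integrated over $(B_2^{(i)}\setminus B_2^{(i+1)})\times B_3^{(i)}\times V(G)^4$, is $\ge S_i-o(p^4)\ge\frac{c}{2}p^4$ for $n$ large. Finally, since the ``shells'' $B_2^{(i)}\setminus B_2^{(i+1)}=\{pb_2^{(i)}\le d(x)<pb_2^{(i+1)}\}$ are pairwise disjoint in the $x$-coordinate, summing over $i=1,\dots,M-1$ bounds the total by $t(H,U)\le Cp^4$, forcing $(M-1)\frac{c}{2}\le C$ and contradicting the choice of $M$. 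This establishes the claim, hence the lemma.

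I expect the main obstacle to be precisely the scale at which $B_3$ lives: the degree threshold $pb_2\ll p^{1+\eps_0}$ is far below $p$, so the sharp volume bounds \eqref{eq:d2bound1}--\eqref{eq:lam1} of \cref{lemma:graphonlemmaextend} are unavailable and one is forced to use \cref{lemma:graphonlemmaextend2}, which costs an extra factor $\log(1/p)/\ka=(\log(1/p))^2$ in each of $\lambda(B_2^{(i+1)})$ and $\int_{B_3^{(i)}}d(x')^2$. The delicate point is to confirm that the $b_2$-window $[p^{2\eps_0},p^{\eps_0}]$ remains wide enough to simultaneously accommodate $M=M(c)$ well-separated thresholds and absorb these polylogarithmic losses---which it does, since any fixed power of $p^{-\eps_0}$ outgrows any power of $\log(1/p)$.
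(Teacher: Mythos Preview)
Your proof is correct and follows essentially the same adaptive thresholding argument as the paper. The only difference is cosmetic: you telescope on the $x$-coordinate by splitting $B_2^{(i)}=B_2^{(i+1)}\sqcup(B_2^{(i)}\setminus B_2^{(i+1)})$, whereas the paper telescopes on the $x'$-coordinate by splitting $B_3^{(i)}=B_3^{(i-1)}\sqcup(B_3^{(i)}\setminus B_3^{(i-1)})$; the cross-term bound $\lambda(B_2)\int_{B_3}d(x')^2\,\dx'\ll p^4$ and the disjointness-of-shells contradiction are identical in both versions.
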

\begin{proof}
We use an adaptive thresholding argument similar to that of \cref{lemma:xb1}.
First, note that by \cref{cor:holder-bounded} we have that \[ t(H, U) \le \left(\int U(x, y, z)^2 \dx\,\dy\,\dz \right)^2 \ls p^4. \] Now, let $C$ be a constant so that $t(H, U) \le Cp^4$. It is sufficient to show that for any constant $c > 0$ that there is some choice of $b_2$ (which possibly depends on $U$) such that \[ \int_{B_2 \times B_3 \times V(G)^4} U(x, y, z')U(x, z, y')U(y, z, x')U(x', y', z') \dx\,\dx'\,\dy\,\dy'\,\dz\,\dz' \le cp^4. \]

Set $M = \frac{3C}{c}$. Now, choose $b_2^{(1)}, b_2^{(2)}, \dots, b_2^{(M)}$ such that \[ p^{2\eps_0} \ll \frac{\log(1/p)^2}{\ka ^2} b_2^{(i-1)} \ll b_2^{(i)} \ll p^{\eps_0} \forall 2 \le i \le M. \] We can do this as $\frac{\log(1/p)^{2M}}{\ka ^{2M}} \ll p^{\eps_0}$ for any constant $M$ and $\eps_0 > 0$ (recall that we fixed $\ka  = 1/\log(1/p)$, a slowly decaying function).
Define \[ B_2^{(i)} = \{ x \in V(G): b_1 > d(x) \ge pb_2^{(i)} \} \text{ and } B_3^{(i)} = \{ x \in V(G) : d(x) \le pb_2^{(i)} \}. \]
Define \[ S_i = \int_{B_2^{(i)} \times B_3^{(i)} \times V(G)^4} U(x, y, z')U(x, z, y')U(y, z, x')U(x', y', z') \dx\,\dx'\,\dy\,\dy'\,\dz\,\dz' \text{ for } 1 \le i \le M. \] We want to show that $S_i \le cp^4$ for some $i$. Assume for contradiction that $S_i \ge cp^4$ for all $1 \le i \le M.$
We have that
\begin{align*} &\int_{B_2^{(i)} \times B_3^{(i-1)} \times V(G)^4} U(x, y, z')U(x, z, y')U(y, z, x')U(x', y', z') \dx\,\dx'\,\dy\,\dy'\,\dz\,\dz' \\ &\ls \lambda(B_2^{(i)}) \int_{B_3^{(i-1)}} d(x')^2 \dx' \ls \frac{p \log(1/p)}{b_2^{(i)} \ka} \cdot \frac{p^3 \log(1/p)b_2^{(i-1)}}{\ka } = \frac{b_2^{(i-1)} \log(1/p)^2}{b_2^{(i)} \ka ^2} p^4 \ll p^4 \end{align*}
by \cref{eq:lambound}, \cref{eq:d2bound}, and $\frac{\log(1/p)^2}{\ka ^2} b_2^{(i-1)} \ll b_2^{(i)}$.
Therefore, \begin{align*} &\int_{B_2^{(i)} \times \left(B_3^{(i)} \bs B_3^{(i-1)}\right) \times V(G)^4} U(x, y, z')U(x, z, y')U(y, z, x')U(x', y', z') \dx\,\dx'\,\dy\,\dy'\,\dz\,\dz' \\ &\ge S_i - o(p^4) \ge \frac{1}{2}cp^4. \end{align*}
As the sets $B_2^{(i)} \times \left(B_3^{(i)} \bs B_3^{(i-1)}\right) \times V(G)^4$ are disjoint, summing over all $1 \le i \le M$ gives us that $Cp^4 \ge t(H, U) \ge \frac{1}{2}cMp^4$, a contradiction.
\end{proof}

We can now bound the contribution to \cref{eq:h}. of the region where none of $x, x', y, y', z, z'$ are in $B_1$ and not all of $x, x', y, y', z, z'$ are in $B_3$.
\begin{lemma}
\label{lemma:b2contrib}
Consider a choice of $b_2$ satisfying the constraints of \cref{lemma:xb2}. The contribution to the integral in \cref{eq:h} of the region where at least one of $x, x', y, y', z, z'$ lies in $B_2$ and none of $x, x', y, y', z, z'$ lie in $B_1$ is at most $3\eta^2 + 3\theta_2^2 + o(p^4).$
\end{lemma}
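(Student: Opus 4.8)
The plan is a case analysis over which of the three antipodal pairs $(x,x')$, $(y,y')$, $(z,z')$ of $H$ (these are exactly the pairs that never lie in a common edge) lands in $B_2$ versus $B_3$, using \cref{cor:holder-bounded} edge-by-edge on each surviving region and exploiting the symmetry of $H$ to keep the number of cases small. Throughout, $b_1$ and $b_2$ are the (possibly $U$-dependent) parameters fixed in \cref{lemma:xb1} and \cref{lemma:xb2}, so $p^{2\eps_0}\ll b_1,b_2\ll p^{\eps_0}$ and the estimates in \cref{eq:lambound} are available.

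First I would record that the integrand $F(x,x',y,y',z,z'):=U(x,y,z')U(x,z,y')U(y,z,x')U(x',y',z')$ in \cref{eq:h} is invariant under $\mathrm{Aut}(H)$ acting on the six coordinates, and that this automorphism group contains both all permutations of the three antipodal pairs and the three ``double transpositions'' that swap the endpoints of two pairs simultaneously (e.g.\ $x\leftrightarrow x'$ together with $y\leftrightarrow y'$). Concretely, realizing $H$ as the ``alternating'' faces of the octahedron on $\{\pm e_1,\pm e_2,\pm e_3\}$, these are the permutations of the three coordinates together with the sign-flips of an even number of coordinates. Combining this symmetry with \cref{lemma:xb2}, the contribution of any region in which some antipodal pair is \emph{split} (one endpoint in $B_2$, the other in $B_3$) is $o(p^4)$: the case one endpoint of the $(x,x')$ pair is in $B_2$ and the other in $B_3$ is literally \cref{lemma:xb2}, and every other split-pair region is carried to it by an automorphism. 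Since the region under consideration has no vertex in $B_1$, and there are only finitely many (at most $2^6$) ways of assigning the six vertices to $\{B_2,B_3\}$, it remains to bound the finitely many assignments in which every antipodal pair is monochromatic and at least one pair lies in $B_2$.

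Next I would split these monochromatic assignments by the number $j\in\{1,2,3\}$ of antipodal pairs lying in $B_2$ (the value $j=0$ is exactly the all-$B_3$ region handled in \cref{lemma:allb3}, excluded here). If $j=1$, say $x,x'\in B_2$ and the remaining four vertices in $B_3$, then each of the four edges of $H$ has exactly one endpoint in $B_2$ and two in $B_3$, so \cref{cor:holder-bounded} bounds this region by $\bigl(\int_{B_2\times B_3^2}U^2\bigr)^{4/2}=\eta^2$; there are $3$ choices of which pair is in $B_2$, contributing $3\eta^2$. If $j=2$, say $x,x',y,y'\in B_2$ and $z,z'\in B_3$, then each edge has two endpoints in $B_2$ and one in $B_3$, so \cref{cor:holder-bounded} gives $\bigl(\int_{B_2^2\times B_3}U^2\bigr)^{2}=\theta_2^2$, again with $3$ choices, contributing $3\theta_2^2$. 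If $j=3$, all six vertices lie in $B_2$, and here I would simply use $U\le 1$ to bound the region crudely by $\lambda(B_2)^6$; by \cref{eq:lambound} with $\ka=1/\log(1/p)$ and $b_2\gg p^{2\eps_0}$ one has $\lambda(B_2)\ls p^{1-2\eps_0}\log^2(1/p)$, hence $\lambda(B_2)^6\ls p^{6-12\eps_0}\log^{12}(1/p)=o(p^4)$ since $\eps_0=\tfrac{1}{100}<\tfrac{1}{6}$. Summing the three cases together with the $o(p^4)$ error from the split-pair regions yields the claimed bound $3\eta^2+3\theta_2^2+o(p^4)$.

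The main obstacle is the reduction in the second paragraph: one must correctly pin down $\mathrm{Aut}(H)$ and check that it acts transitively enough on the antipodal pairs (and on the endpoints within a pair, modulo a second pair) so that \emph{every} split-pair configuration is a relabelling of the single region controlled by \cref{lemma:xb2}; this is what quarantines all the genuinely delicate mixed-scale interactions into \cref{lemma:xb2}. Once that is in place, the remaining work is a routine application of \cref{cor:holder-bounded}, the only mild subtlety being that the all-$B_2$ case must be discarded by a volume bound rather than by Hölder --- an edge-by-edge Hölder estimate there would only give $O(p^4\log^4(1/p))$, which is not $o(p^4)$.
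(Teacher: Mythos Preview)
Your proposal is correct and follows essentially the same approach as the paper: use \cref{lemma:xb2} (together with the symmetries of $H$) to discard all split-pair regions, dispose of the all-$B_2$ region by the crude volume bound $\lambda(B_2)^6=o(p^4)$, and then bound the remaining monochromatic configurations ($j=1$ and $j=2$ pairs in $B_2$) via \cref{cor:holder-bounded} to obtain $3\eta^2+3\theta_2^2$. Your write-up is in fact slightly more explicit than the paper's about the automorphism group (in particular the need for the double transpositions to carry $x\in B_3,\,x'\in B_2$ back to the region covered by \cref{lemma:xb2}), and your closing remark about why the all-$B_2$ case requires a volume bound rather than H\"older is a nice observation not made in the paper.
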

\begin{proof}
We apply \cref{lemma:xb2} and fix a threshold $b_2$ such that the contribution to \cref{eq:h} from the region $x \in B_2$ and $x' \in B_3$ (and the symmetric regions $y \in B_2$ and $y' \in B_3$ or $z \in B_2$ and $z' \in B_3$) is $o(p^4).$ We claim that if all of $x, x', y, y', z, z'$ lie in $B_2$ then the contribution to \cref{eq:h} is $o(p^4).$ Indeed, we have that \[ \int_{B_2^6} U(x, y, z')U(x, z, y')U(y, z, x')U(x', y', z') \dx\,\dx'\,\dy\,\dy'\,\dz\,\dz' \le \lambda(B_2)^6 \ls \left(\frac{p\log(1/p)}{b_2 \ka}\right)^6 \ll p^4 \] by \cref{eq:lambound}. The only remaining regions to analyze are
\begin{enumerate}
\item $x, x' \in B_2$ and $y, y', z, z' \in B_3$ (and its two symmetric versions)
\item $x, x', y, y' \in B_2$ and $z, z' \in B_3$ (and its two symmetric versions).
\end{enumerate}
This tells us that
\begin{align*}
&\int_{\oB_1^6} U(x, y, z')U(x, z, y')U(y, z, x')U(x', y', z') \dx\,\dx'\,\dy\,\dy'\,\dz\,\dz' \\
&\le  3\int_{B_2^2 \times B_3^4} U(x, y, z')U(x, z, y')U(y, z, x')U(x', y', z') \dx\,\dx'\,\dy\,\dy'\,\dz\,\dz' \\
&+ 3\int_{B_2^4 \times B_3^2} U(x, y, z')U(x, z, y')U(y, z, x')U(x', y', z') \dx\,\dx'\,\dy\,\dy'\,\dz\,\dz' + o(p^4) \\
&\le 3\left(\int_{B_2 \times B_3^2} U(x, y, z)^2 \dx\,\dy\,\dz \right)^2 + 3\left(\int_{B_2^2 \times B_3} U(x, y, z)^2 \dx\,\dy\,\dz \right)^2 + o(p^4) \\
&\le 3\eta^2 + 3\theta_2^2 + o(p^4)
\end{align*}
after using \cref{cor:holder-bounded}.
\end{proof}
\begin{proof}[Proof of \cref{lemma:h}]
Follows from \cref{lemma:xb1,lemma:b1contrib,lemma:allb3,lemma:xb2,lemma:b2contrib}.
\end{proof}
\begin{proof}[Proof of \cref{thm:varsolutions}(b)]
Recall that we have that
\begin{equation}\label{eq:done1} (1+\d)p^4 \le t(H, W) = p^4 + 4p^3 t(E_1, U) + 6p^2 t(E_2, U) + 4p^3 t(E_3, U) + t(H, U). \end{equation} Additionally, we have by \cref{lemma:x2} that \begin{align} \int I_p(p+U(x, y, z))\dx\,\dy\,\dz \nonumber &\ge (1-o(1))\log(1/p) \int U(x, y, z)^2 \dx\,\dy\,\dz \\ &\ge (1-o(1))\log(1/p)(3\theta_1 + 3\theta_2 + 3\eta + \theta_3). \label{eq:done2} \end{align} Combining \cref{eq:done1,eq:done2,lemma:e1,lemma:e2,lemma:t,lemma:h}
gives us that
\begin{align} &\frac{\phi(H, n, p, \d)}{n^3 \log(1/p)} \nonumber \\ &\ge (1-o(1))\frac{1}{3!} \inf\{ 3\theta_1 + 3\theta_2 + 3\eta + \theta_3 : 6p^2\theta_1 + 3\theta_1^2 + 4p\theta_2^{3/2} + 3\theta_2^2 + \theta_3^2 + 3\eta^2 \ge (\d-o(1))p^4 \}. \label{eq:done3} \end{align}
To bound the quantity in \cref{eq:done3}, note that because all the functions \[ 6p^2\theta_1 + 3\theta_1^2, 4p\theta_2^{3/2} + 3\theta_2^2, \theta_3^2, \text{ and }3\eta^2\] are convex, the infimum in \cref{eq:done3} is achieved when exactly one of $\theta_1,\theta_2,\theta_3$, and $\eta$ is nonzero. A direct computation using this observation shows that the right hand side of \cref{eq:done3} is at least
\[ (1-o(1)) \frac{p^2}{6} \cdot \min\left\{\sqrt{9+3\d}-3, 3\theta_2^\star, \sqrt{\d}, \sqrt{3\d} \right\} \] where $\theta_2^\star$ is the solution to $4(\theta_2^\star)^{3/2} + 3(\theta_2^\star)^2 = \d$,
and it is straightforward to check that $3\theta_2^\star \ge \min\left\{\sqrt{9+3\d}-3, \sqrt{\d} \right\}$. Hence the quantity in \cref{eq:done3} is at least \[ (1-o(1)) \cdot \frac{p^2}{6} \cdot \min\left\{\sqrt{9+3\d}-3, \sqrt{\d} \right\} \] as desired.
\end{proof}
In particular, this proof shows that solution to the variational problem is given by either planting several $1$-hubs or $3$-hubs. In the case where $1$-hubs are planted, this increases the number of copies of $E_2$, and corresponds to the $6c_1$ term in Equation \ref{eq:plant13}. In the case where $3$-hubs are planted, this directly increases the number of copies of $H$, and corresponds to the $c_{1/3}^6$ term in Equation \ref{eq:plant13}. The case of planting $2$-hubs was only excluded in the final computation, and a more intuitive explanation of this phenomenon would be interesting.

\subsection{Speculations about general hypergraphs}
Now we give some comments about possible approaches towards resolving \cref{conj:main}, and potential obstacles.
One possible approach resolves around making finer thresholds than those defined in \cref{sec:lowerbound}. Specifically, it is reasonable to define
thresholds \[ B_1 = \{x : b_1 \le d(x) \}, B_2 = \{x : b_2 \le d(x) < b_1 \}, \dots \] for well-chosen constants $b_1, b_2, \dots$. There are several potential obstacles. First, the method of performing the 
analysis of \cref{sec:lowerbound,sec:quad} relies on selecting for each vertex $i$ which set $B_j$ it lies in,
and then showing that most of these lead to negligible contributions. In this paper, we only had at most three sets $B_1,B_2,B_3$ so resolving all the cases was reasonable. When the number of sets we are considering increases,
there are significantly more cases to consider, and thus may be more difficult to analyze.
Additionally, there is no guarantee that this type of threshold is sufficient to show our desired conjecture. In fact, one
may need to use more general thresholds. Specifically, extend our ``degree" function $d$ to multiple inputs in the following way.
In the case of $r$-graphs, for an integer $k$ define
\[ d_k(x_1, x_2, \dots, x_k) = \int U(x_{[r]}) \dx_{k+1} \dots \dx_r. \] Then we could define sets
\[ B_b = \{(x_1, x_2, \dots, x_k) : b \le d_k(x_1, \dots, x_k) \} \] for example. Once again, this type of thresholding argument
becomes much harder to keep track of.

\appendix

\section{Large deviation principle for hypergraphs}
\label{app:ldp}
\subsection{Large deviations from Gaussian width}

We show how to deduce \cref{thm:fromeldanldp} from the large deviation principle of Eldan~\cite{Eld18}. One can also derive it from the earlier LDP of Chatterjee and Dembo~\cite{CD16} following ideas from \cite{BGSZ}. It is likely that we can apply more recent frameworks of \cite{CD} or \cite{HMS} to obtain better dependencies, though it would likely involve some highly non-trivial work. We do not try to optimize the parameters here and simply give the fastest way to derive some large deviations framework for random graphs that works for $p \ge n^{-c}$ for some constant $c = c_H > 0$. For constant or extremely slowly decaying $p$, one can alternatively apply the original Chatterjee--Varadhan method~\cite{CV11} with the hypergraph regularity theorem instead of the graph regularity theorem.

 We now make the necessary definitions in order to apply Eldan's result.
\begin{definition}
For a subset $K \subseteq \R^n$, define the \emph{Gaussian-width} of $K$ to be \begin{equation*} \GW(K) = \E_{\Gamma} \left[ \sup_{x \in K} \left\l x, \Gamma \right\r \right] \end{equation*} $\Gamma \sim N(0, \mathrm{Id})$ is a standard Gaussian random vector in $\R^n$.
\end{definition}
For a function $f: \{0, 1\}^N \to \R$, we can define the \emph{discrete derivatives} of $f$
\begin{equation*}
\DiscDerv_i(f(x)) = F(x_1, \dots, x_{i-1}, 1, x_{i+1}, \dots, x_N) - F(x_1, \dots, x_{i-1}, 0, x_{i+1}, \dots, x_N)
\end{equation*}
for any $1 \le i \le N$ and $x = (x_1, \dots, x_N) \in \{0,1\}^N$. 
Using this, we can define the \emph{discrete gradient} of $f$ as 
\begin{equation*}
\DiscGrad(f(x)) = (\DiscDerv_1(f(x)), \DiscDerv_2(f(x)), \dots, \DiscDerv_N(f(x))).
\end{equation*}
An important quantity in Eldan's large deviations result is the Gaussian width of the set of
discrete gradients of $f$, which we define as
\begin{equation*}
\DiscGW(f) = \GW( \{ \DiscGrad(f(x)) : x \in \{0, 1\}^N \} \cup \{0 \}).
\end{equation*}
Our application will rely on bounding the discrete Gaussian width of a counting function associated to a hypergraph, which we do in \cref{sec:gw}.
Now, define the \emph{discrete Lipschitz constant} of $f$ as
\begin{equation*}
\DiscLip(f) = \max_{1 \le i \le N, x \in \{0, 1\}^N} \DiscDerv_i(f(x))
\end{equation*}
We can define the natural variational problem associated to the function $f$ as
\begin{equation*}
\phi^f_p(t) = \inf_{x \in [0, 1]^N} \left\{ \sum_{i = 1}^N I_p(x_i) : \E f(X) \ge tN \right\}.
\end{equation*}
where the expectation is taken with respect to a random vector $X = (X_1, X_2, \dots, X_N)$ where $X_i \sim \text{Bernoulli}(x_i)$ independently for each $i \in [N].$

Eldan's results give us a large deviation principle provided that the Gaussian width can be efficiently controlled.

\begin{theorem}[Eldan \cite{Eld18} Theorem 5]
\label{thm:eldanldp}
Let $X = (X_1, X_2, \dots, X_N) \in \{0, 1\}^N$ be a random vector with i.i.d $X_i \sim \text{Bernoulli}(p).$ Given a function $f : \{0, 1\}^N \to \R$, for every $t, \eps \in \R$ with $0 < \eps < \phi_p^f(t-\eps)/N$, we have
\begin{equation*}
\log\Pr\left[ f(X) \ge tN \right] \le -\phi_p^f(t-\eps) (1 - 64LN^{-1/3})
\end{equation*}
with
\begin{equation*}
L = \frac{1}{\eps} \left(2\DiscLip(f) + |\log(p(1-p))| \right)^\frac23 \left(2\DiscGW(f) + \frac{1}{\eps} \DiscLip(f)^2 \right)^{1/3}.
\end{equation*}
Also if $\frac{1}{N\eps^2}\DiscLip(f)^2 \le \frac12$ then we have the lower bound
\[ \log\Pr\left[ f(X) \ge (t-\eps)N \right] \ge -\phi_p(t)\left(1 + \frac{2}{N\eps^2}\DiscLip(f)^2\right)-2. \]
\end{theorem}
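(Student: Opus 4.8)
Since this is (a restatement of) Eldan's Theorem~5 \cite{Eld18}, the plan is to recall the structure of his proof, which rests on \emph{stochastic localization}. For the upper bound, the first step is to trade a probability for a relative entropy: writing $A=\{x:f(x)\ge tN\}$ and letting $\nu=\mu(\,\cdot\mid A)$ be the conditioning of the product $\mathrm{Bernoulli}(p)$ measure $\mu$ on $A$, one has the exact identity $-\log\Pr[f(X)\ge tN]=D(\nu\,\|\,\mu)$. It therefore suffices to lower bound $D(\nu\,\|\,\mu)$ by $\phi_p^f(t-\eps)$ up to the stated multiplicative loss. The slack $\eps$ is what will allow us, later, to pass from an ``on average'' statement about a mixture to a pointwise statement about its components.

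The second step is to run the Eldan localization process started from $\nu$, producing a decomposition $\nu=\E_\theta[\nu_\theta]$ in which each $\nu_\theta$ is an exponential tilt $\nu_\theta\propto e^{\langle\ell_\theta,\,\cdot\,\rangle}\mu$ of the base measure, hence itself a \emph{product} measure. The chain rule for relative entropy gives the identity $D(\nu\,\|\,\mu)=\E_\theta D(\nu_\theta\,\|\,\mu)-I(\theta;X)$, where $I(\theta;X)$ is the mutual information of $\theta$ and $X\sim\nu$ in the joint law of the mixture. This splits the task into two: first, that $\E_\theta D(\nu_\theta\,\|\,\mu)$ is at least $(1-o(1))\phi_p^f(t-\eps)$, and second, that the ``entropy defect'' $I(\theta;X)$ is small. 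The first is soft: from $\E_\theta\E_{\nu_\theta}f=\E_\nu f\ge tN$ a Markov-type argument (here $\eps$ is used) shows $\E_{\nu_\theta}f\ge(t-\eps)N$ for a non-negligible set of $\theta$, and for each such $\theta$ the measure $\nu_\theta$ is a product measure, so $D(\nu_\theta\,\|\,\mu)\ge\phi_p^f(t-\eps)$ by definition of $\phi_p^f$.

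The main obstacle is bounding the entropy defect $I(\theta;X)$ by the gradient complexity $\DiscGW(f)$; this is Eldan's reverse log-Sobolev-type inequality. The drift of the localization process at time $s$ is, up to normalization, the $\nu_s$-conditional mean of the discrete gradient $\DiscGrad(f(X))$, so the accumulated cost of fully localizing is governed by the fluctuations of the vectors $\{\DiscGrad(f(x)):x\in\{0,1\}^N\}$; a Gaussian comparison (Slepian / generic-chaining type) converts a supremum over this set into its Gaussian width $\DiscGW(f)$, while a bounded-differences estimate contributes the $\DiscLip(f)$ and $\lvert\log(p(1-p))\rvert$ factors. Carefully propagating constants through this argument and optimizing the free parameters is the delicate, essentially computational part, and it is this bookkeeping that produces exactly the quantity $L$ and the factor $1-64LN^{-1/3}$ in the statement.

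For the lower bound the argument is the standard change-of-measure and is comparatively routine. Take $\rho=\prod_i\mathrm{Bernoulli}(x_i)$ a minimizer of $\phi_p^f(t)$, so $\E_\rho f\ge tN$ and $\sum_iI_p(x_i)=\phi_p(t)$. Since $f$ has bounded differences $\DiscLip(f)$ and $\rho$ is a product measure, $\mathrm{Var}_\rho(f)\le N\,\DiscLip(f)^2$, so Chebyshev gives $\rho\bigl(f\ge(t-\eps)N\bigr)\ge1-\DiscLip(f)^2/(N\eps^2)\ge\tfrac12$ under the hypothesis $\DiscLip(f)^2/(N\eps^2)\le\tfrac12$. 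Then $\log\Pr_\mu[f\ge(t-\eps)N]\ge\E_{\rho(\cdot\mid f\ge(t-\eps)N)}\bigl[\log\tfrac{d\mu}{d\rho}\bigr]+\log\rho(f\ge(t-\eps)N)$ by Jensen, and bounding the correlation between the log-likelihood ratio $\log\tfrac{d\mu}{d\rho}$ and the conditioning event once more by $\DiscLip(f)^2/(N\eps^2)$ converts the first term into $-\phi_p(t)$ up to the advertised factor, yielding the claimed $-\phi_p(t)\bigl(1+2\DiscLip(f)^2/(N\eps^2)\bigr)-2$.
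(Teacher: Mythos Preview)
The paper does not prove this statement: it is quoted verbatim as Theorem~5 of Eldan~\cite{Eld18} and used as a black box in the appendix to derive \cref{thm:fromeldanldp}. So there is no ``paper's own proof'' to compare against; the authors simply cite the result and then plug in bounds on $\DiscLip(T_H)$ and $\DiscGW(T_H)$.

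Your sketch is a reasonable high-level summary of Eldan's actual argument (stochastic localization for the upper bound, tilted-measure plus Chebyshev for the lower bound), but since the paper under review treats the theorem as an imported result, the appropriate ``proof'' here is a one-line citation, not a reproduction of~\cite{Eld18}. If you intend to include a proof sketch for expository completeness, you should flag it as such and make clear it is not part of the paper's contribution.
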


Using this result, we can prove \cref{thm:fromeldanldp}, which reduces the upper tail problem to a discrete variational problem.
Before starting the proof, we define some notation.
We start by defining the function $T_H$ which counts the number of (not necessarily induced) copies of $H$ inside a graph. We intend to apply \cref{thm:eldanldp} on $T_H$.
\begin{definition}[Counting function associated to a hypergraph]
\label{def:counting}
Let $H$ be an $r$-graph and let $n$ be a positive integer. Define $T_H : \R^{\binom{n}{r}} \to \R$ by
\begin{equation*}
T_H(x) = \sum_{\substack{1 \le i_1, i_2, \dots, i_{|V(H)|} \le n \\ i_k \neq i_{k'} \text{ for } k \neq k'}} \prod_{(s_1, s_2, \dots, s_r) \in E(H)} x_{i_{s_1}, i_{s_2}, \dots, i_{s_r}},
\end{equation*}
where $x$ is invariant under permutation of coordinates, so that for all $\sigma \in S_r$ we have that $x_{i_1,\cdots,i_r} = x_{i_{\sigma(1)},\cdots,x_{i_{\sigma(r)}}}.$
\end{definition}

In order to apply Eldan's LDP on $T_H$, we must bound its discrete Gaussian width.
Note that $\DiscGrad(T_H(x)) = \nabla T_H(x)$ as $T_H(x)$ is linear in each variable. Therefore, we have that $\DiscGW(T_H) = \GW(\nabla T_H(\{0, 1\}^{\binom{n}{r}}))$, where \[ \nabla T_H(\{0, 1\}^{\binom{n}{r}}) := \{\nabla T_H(x) : x \in \{0, 1\}^{\binom{n}{r}} \}. \]
The following theorem provides the necessary bound on $\GW(\nabla T_H(\{0, 1\}^{\binom{n}{r}}))$.

\begin{theorem}[Upper bound on the Gaussian width of a $r$-graph]
\label{thm:gwupper}
Let $H$ be a $r$-graph, and let $n$ be a positive integer. Then we have that
\[ \frac{1}{n^{|V(H)| - r}} \cdot \DiscGW(T_H) \lesssim n^\frac{2r-1}{2}, \] where $T_H: \R^{\binom{n}{r}} \to \R$ is defined as in \cref{def:counting}.
\end{theorem}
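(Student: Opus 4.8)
The plan is to evaluate the Gaussian width directly from its definition and then exploit the homomorphism‑counting structure of $\nabla T_H$. Recall from the discussion preceding the statement that, since $T_H$ is multilinear in its variables, $\DiscGrad(T_H(x)) = \nabla T_H(x)$ on $\{0,1\}^{\binom nr}$, so $\DiscGW(T_H) = \GW(\set{\nabla T_H(x) : x \in \{0,1\}^{\binom nr}})$, which equals $\E_\Gamma \sup_{x \in \{0,1\}^{\binom nr}} \langle \nabla T_H(x), \Gamma \rangle$ for $\Gamma$ a standard Gaussian in $\R^{\binom nr}$. It therefore suffices to prove $\E_\Gamma \sup_x \langle \nabla T_H(x), \Gamma\rangle = O_H(n^{|V(H)|-1/2})$, which upon dividing by $n^{|V(H)|-r}$ is the claimed bound. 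Writing $\partial_{x_e} T_H(x) = \sum_{f_0 \in E(H)} N^{f_0}_x(e)$, where $N^{f_0}_x(e)$ is the $x$-weighted number of homomorphic copies of $H$ into $[n]$ sending the edge $f_0$ onto the $r$-set $e$, we get $\langle \nabla T_H(x),\Gamma\rangle = \sum_{f_0 \in E(H)} \sum_e \Gamma_e N^{f_0}_x(e)$. Since $|E(H)| = O_H(1)$ and the supremum is subadditive, it is enough to bound $\E_\Gamma \sup_x \sum_e \Gamma_e N^{f_0}_x(e)$ by $O_H(n^{|V(H)|-1/2})$ for each fixed edge $f_0$, where $0 \le N^{f_0}_x(e) \le C_H n^{|V(H)|-r}$ for every $e$ (that many placements of the $|V(H)|-r$ vertices outside $f_0$).

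Fix $f_0$ and pick a bipartition $V(f_0) = U_1 \sqcup U_2$ into nonempty parts. Matricize $\Gamma$ as $M_\Gamma$ with rows indexed by $|U_1|$-subsets and columns by $|U_2|$-subsets of $[n]$ (entry at $(S_1,S_2)$ equal to $\Gamma_{S_1 \cup S_2}$ when $S_1\cap S_2=\emptyset$, else $0$), and matricize $N^{f_0}_x$ in the same way as $N^\flat_x$, so that $\sum_e \Gamma_e N^{f_0}_x(e) = \langle M_\Gamma, N^\flat_x \rangle \le \|M_\Gamma\|_{\mathrm{op}}\,\|N^\flat_x\|_{S_1}$. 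The matrix $M_\Gamma$ has i.i.d.\ $N(0,1)$ entries up to a bounded‑multiplicity repetition and dimensions of order $n^{|U_1|}\times n^{|U_2|}$, so standard bounds on operator norms of Gaussian matrices give $\E_\Gamma\|M_\Gamma\|_{\mathrm{op}} = O(n^{\max(|U_1|,|U_2|)/2})$, which is $O(n^{(r-1)/2})$ when $|U_1|=1$; since $\sup_x\|N^\flat_x\|_{S_1}$ is deterministic, it remains to show $\sup_x\|N^\flat_x\|_{S_1} = O_H(n^{|V(H)|-r/2})$, as the product of the two estimates is then $O(n^{(r-1)/2 + |V(H)|-r/2}) = O(n^{|V(H)|-1/2})$. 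The point is that $N^{f_0}_x = \sum_{\vec c} R_{\vec c}$, the sum running over the $O_H(n^{|V(H)|-r})$ placements $\vec c$ of the vertices of $H$ outside $f_0$, where $R_{\vec c}$ is the $\{0,1\}$-matrix recording, for each pair of candidate images $S_1$ of $U_1$ and $S_2$ of $U_2$, whether placing $U_1$ on $S_1$, $U_2$ on $S_2$, and the remaining vertices according to $\vec c$ maps every edge of $H$ other than $f_0$ onto a $1$-entry of $x$. If $U_1,U_2$ are chosen so that no edge of $H\setminus f_0$ meets both of them, then each $R_{\vec c}$ is, up to a correction matrix supported on $O(n^{r-1})$ entries coming from the disjointness constraints, an outer product, hence has rank one and $\|R_{\vec c}\|_{S_1} = O(n^{r/2})$; therefore $\|N^\flat_x\|_{S_1} \le \sum_{\vec c}\|R_{\vec c}\|_{S_1} = O_H(n^{|V(H)|-r/2})$. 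For $r=2$ and simple $H$ such a bipartition always exists (take both $U_i$ singletons), recovering the graph case of \cite{LZ17,BGLZ17}.

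The main obstacle is that for $r\ge 3$ a bipartition of $V(f_0)$ uncrossed by $E(H)\setminus f_0$ need not exist — already $H=K_4^{(3)}$ fails it for every edge. For such edges one must estimate $\|N^\flat_x\|_{S_1}$ through a finer analysis of the matrices $R_{\vec c}$: the rank of $R_{\vec c}$ is controlled by how many vertices of $U_1$ and of $U_2$ are covered by a common edge of $H\setminus f_0$, so one optimizes the bipartition and, where the rank is still too large, bounds the relevant completion counts via the generalized H\"older inequality of \cref{thm:holder} and \cref{cor:holder-bounded}. Checking that every edge $f_0$ contributes $O_H(n^{|V(H)|-1/2})$ under all these cases is the bulk of the proof; the exponent $(2r-1)/2$ is simply what survives the case analysis uniformly, and is not optimized.
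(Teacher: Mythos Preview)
Your proposal is not a proof: the $r\ge 3$ case is where all the content lies, and you have only gestured at it. You correctly identify that for $r\ge 3$ no bipartition of $V(f_0)$ need be uncrossed by $E(H)\setminus f_0$ (already $K_4^{(3)}$ fails this), and then you say one ``optimizes the bipartition'' and handles the remaining rank via ``a finer analysis'' and ``the generalized H\"older inequality,'' with the final sentence asserting that ``checking that every edge $f_0$ contributes $O_H(n^{|V(H)|-1/2})$ under all these cases is the bulk of the proof.'' But you have not done that bulk. The theorem is for an arbitrary $r$-graph $H$, so there is no finite case analysis to defer to; you would need a structural argument that controls $\|N_x^\flat\|_{S_1}$ uniformly in $H$, and nothing in your outline supplies one. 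In particular, for $H=K_{r+1}^{(r)}$ and any bipartition $U_1\sqcup U_2$ of $f_0$, every other edge of $H$ meets both $U_1$ and $U_2$, so none of your $R_{\vec c}$ is close to rank one, and it is not clear your Schatten-$1$ bound can be rescued.

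The paper's argument bypasses matricization entirely. The key object is the hypergraph cut norm
\[
\|f\|_{\square^r} = \sup_{u_1,\dots,u_r:[n]^{r-1}\to[0,1]}\ \Bigl|\sum_{i_1,\dots,i_r} f(i_1,\dots,i_r)\prod_{k=1}^r u_k(i_1,\dots,\widehat{i_k},\dots,i_r)\Bigr|.
\]
A short combinatorial lemma (freeze the $|V(H)|-r$ vertices outside $f_0$ and absorb each remaining factor, which depends on a proper subset of the $r$ coordinates, into one of the $u_k$) gives $|T(x,\dots,\Gamma,\dots,x)| \le n^{|V(H)|-r}\|\Gamma\|_{\square^r}$ uniformly in $x$, for every $r$-graph $H$ and every edge $f_0$. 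Then $\E_\Gamma\|\Gamma\|_{\square^r}$ is the Gaussian width of a set of at most $2\cdot 2^{\,r\,n^{r-1}}$ sign vectors in $[-1,1]^{n^r}$, so the elementary bound $\GW(S)\lesssim\sqrt{N\log|S|}$ yields $\E_\Gamma\|\Gamma\|_{\square^r}\lesssim n^{(2r-1)/2}$ directly. No bipartition, no rank estimate, no case analysis. Your spectral route is natural for $r=2$ (and indeed recovers the graph case), but for hypergraphs the cut norm is the correct replacement for the operator norm, and once you use it the proof is a few lines.
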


We defer the proof to \cref{sec:gw}. Now, we combine the above results to show \cref{thm:fromeldanldp}.

\begin{proof}[Proof of \cref{thm:fromeldanldp}]
For our $r$-graph $H$ with maximum degree $\Delta$, in the statement of \cref{thm:eldanldp} set $N = \binom{n}{r}$, $t = (1+\delta)p^{|E(H)|}$, $X_1,\dots,X_N$ be the variables associated to the $N = \binom{n}{r}$ $r$-tuples in $\binom{[n]}{r}$, and $f(X) = N \cdot t(H, X) = N n^{-|V(H)|} T_H(X)$. We will choose $\eps$ later.

First, it is easy to see that \[ \DiscLip(f) \lesssim N \cdot \frac{n^{|V(H)|-r}}{n^{|V(H)|}} \lesssim 1. \] Also, by \cref{thm:gwupper}, we get that \[ \DiscGW(f) \lesssim N \cdot n^\frac{2r-1}{2} \cdot \frac{n^{|V(H)|-r}}{n^{|V(H)|}} \lesssim n^\frac{2r-1}{2}. \] Now, we restrict ourselves to $\eps$ satisfying \[ \eps \gtrsim n^{-\frac{2r-1}{2}} \gg N^{-1/2}. \] Now, a direct computation gives us (assuming $p \gg N^{-1}$ say) that \[ 64LN^{-1/3} \lesssim \eps^{-1} n^{-1/6} (\log N)^{2/3}. \] Because $p > n^{-\frac{1}{6|E(H)|}} \log n$, we can choose $\eps$ so that \[ p^{|E(H)|} \gg \eps \gg n^{-1/6}(\log N)^{2/3}. \] This way, for any fixed $\delta$, as $\eps \ll p^{|E(H)|}$, we know that \[ \phi_p^f(t-\eps) = \phi_p^f((1+\delta-o(\delta))p^{|E(H)|}) = (1+o(1)) \phi(H, n, p, \delta) \] as desired. Also, combining this with \cref{thm:asympbound} gives us that \[ \frac{64L(\log N)^{2/3}}{N^{1/3}} \ll 1 \text{ and } \eps < \phi_p^f(t-\eps)/N, \] where the second bound follows from \cref{thm:asympbound}: \[ \phi_p^f(t-\eps)/N \gtrsim \frac{n^r p^\Delta \log(1/p)}{N} \gtrsim p^\Delta \log(1/p) \gtrsim p^{|E(H)|} . \] This shows the upper bound. The lower bound follows from $\DiscLip(f) \le 1$ and $\eps \gg N^{-1/2}$.
\end{proof}

\subsection{The Gaussian Width of Hypergraphs}

\label{sec:gw}

In this section we show \cref{thm:gwupper}. First, we have the following easy bound derived from a union bound on Gaussian tails (see \cite[Lemma 4.5]{BGSZ}). 
\begin{lemma}[Small sets have small Gaussian width]
\label{lemma:gwsmall}

If $S \subseteq [-1, 1]^N$, then \[ \GW(S) \lesssim \sqrt{N \log |S|}. \]
\end{lemma}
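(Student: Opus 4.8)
The plan is to use the standard sub-Gaussian maximal inequality (Chernoff union bound over a finite set), applied to the linear functionals $x \mapsto \langle x, \Gamma\rangle$. Fix $\Gamma \sim N(0, \mathrm{Id})$ on $\R^N$ and assume $|S| \ge 2$ (the case $|S| \le 1$ being trivial, since then $\GW(S)$ equals $\E_\Gamma \langle x, \Gamma\rangle = 0$ for the single point $x \in S$, or is vacuous when $S = \varnothing$). First I would bound the moment generating function of each summand: for a fixed $x \in S \subseteq [-1,1]^N$ we have $\norm{x}_2^2 \le N$, so $\langle x, \Gamma\rangle$ is a centered Gaussian of variance at most $N$, whence $\E_\Gamma e^{\lambda \langle x, \Gamma\rangle} = e^{\lambda^2 \norm{x}_2^2/2} \le e^{\lambda^2 N/2}$ for every $\lambda \in \R$.

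Next I would apply the exponential union bound. By Jensen's inequality and monotonicity of $\exp$, for any $\lambda > 0$ we get $e^{\lambda \GW(S)} = e^{\lambda\, \E_\Gamma \sup_{x \in S} \langle x, \Gamma\rangle} \le \E_\Gamma \sup_{x \in S} e^{\lambda \langle x, \Gamma\rangle} \le \sum_{x \in S} \E_\Gamma e^{\lambda \langle x, \Gamma\rangle} \le |S|\, e^{\lambda^2 N/2}$, using the MGF bound from the first step. Taking logarithms and dividing by $\lambda$ yields $\GW(S) \le \frac{\log|S|}{\lambda} + \frac{\lambda N}{2}$ for every $\lambda > 0$.

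Finally I would optimize the free parameter: the choice $\lambda = \sqrt{2\log|S|/N}$ (legitimate since $|S| \ge 2$, so $\log |S| > 0$) gives $\GW(S) \le \sqrt{2N\log|S|} \ls \sqrt{N \log |S|}$, as claimed. There is essentially no obstacle here; the only points that need a moment's attention are the variance estimate $\norm{x}_\infty \le 1 \Rightarrow \norm{x}_2^2 \le N$ and the degenerate small-$|S|$ case, both of which are immediate.
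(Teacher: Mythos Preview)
Your proof is correct and is exactly the standard sub-Gaussian maximal inequality argument the paper has in mind; the paper does not actually write out a proof but simply describes the lemma as ``derived from a union bound on Gaussian tails'' and cites \cite[Lemma~4.5]{BGSZ}, which is precisely what you have carried out.
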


Recall from above that $\text{DiscGW}(T_H) = \GW\left(\nabla T_H\left( \{ 0, 1\}^{\binom{n}{r}} \right) \right).$ Let the vertices of $H$ be indexed $1, 2, \dots, |V(H)|$ and let the edges be labelled $e_1, e_2, \dots, e_{|E(H)|}.$
For edge $e_i$, let the vertices in that edge be $s_{i, 1}, s_{i, 2}, \dots, s_{i, r}.$ Now, for functions $f_1, f_2, \dots, f_{E(H)} : [n]^r \to \R$, define
\begin{equation*} T(f_1, f_2, \dots, f_{|E(H)|}) = \sum_{1 \le i_1, i_2, \dots, i_{|V(H)|} \le n} \prod_{j=1}^{|E(H)|} f_j(i_{s_{j, 1}}, i_{s_{j, 2}}, \dots, i_{s_{j, r}}).
\end{equation*}

Consider an element $x \in \{0, 1\}^{n^r}.$ In this proof, we view $x$ instead as a function from $[n]^r$ to $\{0, 1\}$. In order to emphasize this view,
we use the notation $x(i_1, i_2, \dots, i_r) := x_{i_1, i_2, \dots, i_r}$ for $1 \le i_1, i_2, \dots, i_r \le n.$
Let $\Gamma$ denote a random $n^r$-dimensional Gaussian. Once again, we view $\Gamma$ as a function from $[n]^r \to \R.$ To emphasize this, we used the notation $\Gamma(i_1, \dots, i_r) := \Gamma_{i_1, \dots, i_r}.$ One can easily check that we have
\begin{equation}
\label{eq:done4}
\GW\left(\nabla T_H\left( \{ 0, 1\}^{\binom{n}{r}} \right) \right) \ls \E_\Gamma\left[ \sup_{x \in \{0, 1\}^{n^r}} \sum_{i = 1}^{|E(H)|} T(\overbrace{x, \dots, x}^{i-1 \text{ terms}}, \Gamma, x, \dots, x) \right].
\end{equation}
From here, our proof proceeds in two steps. First, we bound $T(x, \dots, x, \Gamma, x, \dots, x)$ by the \emph{cut-norm} of $\Gamma$, which we soon define. Afterwards, we use \cref{lemma:gwsmall} to bound the expected cut-norm of $\Gamma$. We now formally define the cut-norm.

\begin{definition}
\label{def:cutnorm}
Define the \emph{cut-norm} of function $f: [n]^r \to \R$ as
\begin{equation*}
\|f\|_{\square^r} = \sup_{\substack{u_1, u_2, \dots, u_r : [n]^{r-1} \to [0, 1] \\ u_1, u_2, \dots, u_r \text{ symmetric }}} \left| \sum_{1 \le i_1, \dots, i_r \le n} f(i_1, \dots, i_r) \prod_{k = 1}^r u_k(i_1, \dots, i_{k-1}, i_{k+1}, \dots, i_r). \right|
\end{equation*}
\end{definition}

Note that the expression inside the absolute value is linear in each of the $u_i$. Hence the supremum is achieved when the $u_i$ all have range $\{0, 1\}$ instead of $[0, 1].$ Specifically, we can write
\begin{equation*}
\|f\|_{\square^r} = \sup_{\substack{u_1, u_2, \dots, u_r : [n]^{r-1} \to \{0, 1\} \\ u_1, u_2, \dots, u_r \text{ symmetric }}} \left| \sum_{1 \le i_1, \dots, i_r \le n} f(i_1, \dots, i_r) \prod_{k = 1}^r u_k(i_1, \dots, i_{k-1}, i_{k+1}, \dots, i_r). \right|
\end{equation*}
instead. 

Before proceeding, we make one more observation, which we state now.
\begin{lemma}
\label{lemma:subset}
Let $f_1, f_2, \dots, f_\ell : [n]^r \to [0, 1]$ be functions that only depend on a proper subset of the coordinates, i.e. for all $j$ there exists an index $t$ such that $f_j(i_1, i_2, \dots, i_{t-1}, i_t, i_{t+1}, \dots, i_r)$ is constant over all $1 \le i_t \le n.$ Then we have that for any function $f: [n]^r \to \R$
\begin{equation*}
\left| \sum_{1 \le i_1, \dots, i_r \le n} f(i_1, \dots, i_r) \prod_{k = 1}^\ell f_k(i_1, \dots, i_r) \right| \le \|f\|_{\square^r}.
\end{equation*}
\end{lemma}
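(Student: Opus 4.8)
The plan is to read the bound off directly from the definition of the cut-norm (\cref{def:cutnorm}) by writing $\prod_{j=1}^\ell f_j$ as a product of $r$ functions, one for each coordinate, each omitting that coordinate. First, for every $j\in\{1,\dots,\ell\}$ use the hypothesis to fix an index $c(j)\in\{1,\dots,r\}$ for which $f_j$ is constant in coordinate $c(j)$. Then, for each $k\in\{1,\dots,r\}$, set
\[
u_k(i_1,\dots,i_{k-1},i_{k+1},\dots,i_r):=\prod_{j\,:\,c(j)=k} f_j(i_1,\dots,i_r).
\]
This is well defined, since every $f_j$ occurring in the product is independent of $i_k$, so the right-hand side does not depend on $i_k$ either; and $u_k$ takes values in $[0,1]$ because each $f_j$ does (an empty product being $1$). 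By construction $\prod_{k=1}^r u_k(i_1,\dots,i_{k-1},i_{k+1},\dots,i_r)=\prod_{j=1}^\ell f_j(i_1,\dots,i_r)$ for all $(i_1,\dots,i_r)\in[n]^r$, so feeding this tuple $(u_1,\dots,u_r)$ into \cref{def:cutnorm} gives
\[
\Bigl|\sum_{1\le i_1,\dots,i_r\le n} f(i_1,\dots,i_r)\prod_{j=1}^\ell f_j(i_1,\dots,i_r)\Bigr|
=\Bigl|\sum_{1\le i_1,\dots,i_r\le n} f(i_1,\dots,i_r)\prod_{k=1}^r u_k(i_1,\dots,i_{k-1},i_{k+1},\dots,i_r)\Bigr|\le\|f\|_{\square^r}.
\]

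The only point requiring care is that \cref{def:cutnorm} takes the supremum over \emph{symmetric} functions $u_k$, while the $u_k$ produced above need not be symmetric. This is a bookkeeping matter rather than a genuine difficulty: passing from the non-symmetric to the symmetric supremum costs at most a factor depending only on $r$ (e.g.\ by averaging over coordinate permutations), which is harmless since $\|\cdot\|_{\square^r}$ is only ever used later inside a ``$\lesssim$''; alternatively, one may simply carry the non-symmetric version of the cut-norm throughout \cref{sec:gw}, as \cref{lemma:gwsmall} produces the same estimate $n^{(2r-1)/2}$ for the expected cut-norm of the Gaussian array in either case (the set of competitor products $\prod_k u_k$ has size $2^{O_r(n^{r-1})}$ whether or not symmetry is imposed). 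With either convention the display above is exactly the asserted inequality.

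So the combinatorial content — choosing $c(j)$ so that the product factors as one term per omitted coordinate — is forced immediately by the hypothesis that each $f_j$ ignores some coordinate, and the ``main obstacle'' is nothing more than the symmetry reconciliation just described.
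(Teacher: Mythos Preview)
Your construction is identical to the paper's: pick a missing coordinate $c(j)$ for each $f_j$ and set $u_k=\prod_{j:c(j)=k}f_j$. You are right that the symmetry constraint in \cref{def:cutnorm} is an issue---the paper's own proof silently ignores it as well---and your second fix (carrying the non-symmetric cut-norm through \cref{sec:gw}, which costs nothing in \cref{lemma:cutgw}) is clean and correct; the permutation-averaging fix you mention first, however, is not obviously valid as stated, since symmetrizing each $u_k$ separately does not in general control the original product by a constant times the symmetrized one.
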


\begin{proof}
We will define disjoint sets $S_1, S_2, \dots, S_r \subseteq [\ell]$, where $\bigcup_k S_k = [\ell]$. Here, $S_k$ is essentially going to be the set of $j$ such that function $f_j$ doesn't depend on the $k$-th coordinate. Specifically, for each $f_j$, give it an index $t_j$ such that $f_j$ doesn't depend on the $t_j$-th coordinate. Now, define $S_k = \{ j : t_j = k \}.$ Now, define \[u_k(i_1, \dots, i_{k-1}, i_{k+1}, \dots, i_r) = \prod_{j \in S_k} f_j(i_1, i_2, \dots, i_{k-1}, 0, i_{k+1}, \dots, i_r), \] where we can put a $0$ in the $k$-th coordinate of each $f_j$ because it doesn't affect the value. Now, it is clear that
\begin{align*} &\left| \sum_{1 \le i_1, \dots, i_r \le n} f(i_1, \dots, i_r) \prod_{k = 1}^\ell f_k(i_1, \dots, i_r) \right| \\
= &\left| \sum_{1 \le i_1, \dots, i_r \le n} f(i_1, \dots, i_r) \prod_{k = 1}^r u_k(i_1, \dots, i_{k-1}, i_{k+1}, \dots, i_r) \right| \le \|f\|_{\square^r}
\end{align*}
by \cref{def:cutnorm}.
\end{proof}

Our next goal is to show that for all $1 \le i \le |E(H)|$
\[ \left|T(\overbrace{x, \dots, x}^{i-1 \text{ terms}}, \Gamma, x, \dots, x) \right| \le n^{|V(H)|-r} \|\Gamma\|_{\square^r}. \]
More generally, we show the lemma below, from which the above claim follows immediately.

\begin{lemma}
\label{lemma:cutbyabs}
For $f: [n]^r \to \R$ and functions $f_2, f_3, \dots, f_{|E(H)|} : [n]^r \to [0, 1]$ we have that
\[ \left| T(f, f_2, \dots, f_{|E(H)|}) \right| \le n^{|V(H)|-r} \|f\|_{\square^r}. \]
\end{lemma}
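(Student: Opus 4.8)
The plan is to isolate the edge carrying $f$ and reduce everything to \cref{lemma:subset}. Write $m = |V(H)|$. After relabeling the vertices of $H$, I may assume that the edge $e_1$ (the one whose factor is $f$) consists of the vertices $1, 2, \dots, r$, with $s_{1,k} = k$, so that in the definition of $T$ the factor $f$ is evaluated at $(i_1, \dots, i_r)$, while the vertices $r+1, \dots, m$ are precisely those outside $e_1$.

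Next I would split the sum defining $T(f, f_2, \dots, f_{|E(H)|})$ over the values of the ``outside'' vertices: for each fixed tuple $(i_{r+1}, \dots, i_m) \in [n]^{m-r}$, freeze those coordinates inside the remaining factors. That is, for $2 \le j \le |E(H)|$ set
\[ \tilde f_j(i_1, \dots, i_r) := f_j(i_{s_{j,1}}, \dots, i_{s_{j,r}}), \]
where each occurrence of a variable $i_{s_{j,\ell}}$ with $s_{j,\ell} > r$ is replaced by its (fixed) value; thus $\tilde f_j \colon [n]^r \to [0,1]$. The key point is that since $H$ is an $r$-graph every edge has exactly $r$ vertices and $e_j \ne e_1$ for $j \ge 2$, so $e_j \not\subseteq e_1 = \{1, \dots, r\}$; hence at least one argument of $f_j$ has been frozen, and $\tilde f_j$, viewed as a function of $(i_1, \dots, i_r)$, depends only on the coordinates indexed by $e_j \cap e_1$, a proper subset of $\{1, \dots, r\}$.

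Applying \cref{lemma:subset} to $f$ together with the functions $\tilde f_2, \dots, \tilde f_{|E(H)|}$ — each valued in $[0,1]$ and each depending on a proper subset of the $r$ coordinates — gives, for every fixed $(i_{r+1}, \dots, i_m)$,
\[ \left| \sum_{1 \le i_1, \dots, i_r \le n} f(i_1, \dots, i_r) \prod_{j=2}^{|E(H)|} \tilde f_j(i_1, \dots, i_r) \right| \le \|f\|_{\square^r}. \]
Summing over the $n^{m-r}$ choices of $(i_{r+1}, \dots, i_m)$ and applying the triangle inequality yields $|T(f, f_2, \dots, f_{|E(H)|})| \le n^{m-r} \|f\|_{\square^r} = n^{|V(H)| - r} \|f\|_{\square^r}$, which is the claim.

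I do not expect a genuine obstacle: essentially all the work is hidden in \cref{lemma:subset}, and the only step requiring a moment's care is checking that each $\tilde f_j$ really omits at least one coordinate, which is immediate from the distinctness of the edges of $H$ combined with all edges having the common size $r$. One should also observe that the distinctness-of-indices restriction appearing in \cref{def:counting} is absent from the function $T$ used here, so it causes no complication.
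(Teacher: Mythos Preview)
Your proof is correct and follows essentially the same approach as the paper: relabel so that $e_1=\{1,\dots,r\}$, split off the outer sum over $i_{r+1},\dots,i_{|V(H)|}$, observe that each remaining factor depends on a proper subset of $i_1,\dots,i_r$, and invoke \cref{lemma:subset}. You have been slightly more explicit than the paper in justifying why each $\tilde f_j$ omits a coordinate (via $e_j\neq e_1$ and $|e_j|=|e_1|=r$), but the argument is otherwise identical.
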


Before proving this, we prove it in the special case where $H = K_3$ as an illustrating example.

\begin{example}
\label{ex:k3}
Let $H = K_3.$ Then using the above definitions we can compute that
\[ T(f_1, f_2, f_3) = \sum_{1 \le x, y, z \le n} f_1(x, y)f_2(y, z)f_3(z, x). \]
Also, the cut-norm in this case is defined as
\[ \|f\|_{\square^2} = \sup_{\substack{u, v: [n] \to [0, 1] \\ u, v \text{ symmetric }}} \left| \sum_{1 \le x, y \le n} u(x)v(y) f(x, y) \right|. \]
Now, we can rewrite
\begin{align*}
|T(f, f_2, f_3)| &= \left| \sum_{1 \le x, y, z \le n} f(x, y)f_2(y, z)f_3(z, x) \right| \\
&\le \sum_{1 \le z \le n} \left| \sum_{1 \le x, y \le n} f_2(y, z)f_3(x, z)f(x, y) \right| \\
&\le \sum_{1 \le z \le n} \|f\|_{\square^2} = n\|f\|_{\square^2}.
\end{align*}
The inequality between the last two lines follows from the definition of the cut-norm: for a fixed $z$, we can define $u := f_3(x, z)$ and $v := f_2(y, z).$
\end{example}
Now we show \cref{lemma:cutbyabs} in general. The proof follows essentially the same format as above.
\begin{proof}[Proof of \cref{lemma:cutbyabs}]
Without loss of generality, assume that the first edge of $H$ contains vertices $1, 2, \dots, r.$
Then we can write
\[\left|T(f, f_2, \dots, f_{|E(H)|}) \right| \\
= \left| \sum_{1 \le i_{r+1}, \dots, i_{|V(H)|} \le n} \left( \sum_{1 \le i_1, \dots, i_r \le n} f(i_1, \dots, i_r) \prod_{k = 2}^{|V(H)|} f_k(i_{s_{k, 1}}, \dots, i_{s_{k, r}}) \right) \right|.
\]
Note that after fixing $i_{r+1}, \dots, i_{|V(H)|}$ in the outer sum in the second line of the above expression, that each of the functions $f_k$ only depends on a proper subset of $i_1, \dots, i_r$ for $2 \le k \le |V(H)|.$ Therefore, by \cref{lemma:subset} we have that
\begin{align*}
&\left|T(f, f_2, \dots, f_{|E(H)|}) \right| \\
&= \left| \sum_{1 \le i_{r+1}, \dots, i_{|V(H)|} \le n} \left( \sum_{1 \le i_1, \dots, i_r \le n } f(i_1, \dots, i_r) \prod_{k = 2}^{|V(H)|} f_k(i_{s_{k, 1}}, \dots, i_{s_{k, r}}) \right) \right| \\
&\le n^{|V(H)|-r} \|f\|_{\square^r}
\end{align*}
as desired.
\end{proof}

Therefore, we also have that
\[ \left|T(\overbrace{x, \dots, x}^{i-1 \text{ terms}}, \Gamma, x, \dots, x) \right| \le n^{|V(H)|-r} \|\Gamma\|_{\square^r}. \]
Using this, we now get that
\begin{align*}
&\frac{1}{n^{|V(H)|-r}} \GW\left(\nabla T_H\left( \{ 0, 1\}^{n^r} \right) \right) \\
&= \frac{1}{n^{|V(H)|-r}}\E_\Gamma\left[ \sup_{x \in \{0, 1\}^{n^r}} \sum_{i = 1}^{|E(H)|} T(\overbrace{x, \dots, x}^{i-1 \text{ terms}}, \Gamma, x, \dots, x) \right] \\
&\lesssim \E_\Gamma\left[\|\Gamma\|_{\square^r}\right].
\end{align*}

\begin{lemma}
\label{lemma:cutgw}
$\E_\Gamma \left[ \|\Gamma\|_{\square^r} \right] \lesssim n^\frac{2r-1}{2}.$
\end{lemma}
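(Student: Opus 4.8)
The plan is to recognize that $\|\Gamma\|_{\square^r}$ is a supremum of linear functionals of $\Gamma$ over a \emph{finite} (albeit exponentially large) set of vectors, and then invoke \cref{lemma:gwsmall}. Concretely, for symmetric functions $u_1,\dots,u_r\colon[n]^{r-1}\to\{0,1\}$, let $G_{u_1,\dots,u_r}\in\{0,1\}^{n^r}$ denote the array whose $(i_1,\dots,i_r)$ entry is $\prod_{k=1}^r u_k(i_1,\dots,i_{k-1},i_{k+1},\dots,i_r)$. By the $\{0,1\}$-valued reformulation of \cref{def:cutnorm} noted just after that definition, we have $\|\Gamma\|_{\square^r}=\sup_{u}\abs{\ang{\Gamma,G_{u_1,\dots,u_r}}}=\sup_{g\in S}\ang{\Gamma,g}$, where $S$ consists of all arrays $G_{u_1,\dots,u_r}$ together with their negations. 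Consequently $\E_\Gamma\!\left[\|\Gamma\|_{\square^r}\right]=\GW(S)$ directly from the definition of the Gaussian width.

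It remains to bound $|S|$ and apply \cref{lemma:gwsmall}. Each $u_k$ is a symmetric function $[n]^{r-1}\to\{0,1\}$, of which there are at most $2^{n^{r-1}}$; since there are $r$ of them and an extra factor of $2$ for negation, $|S|\le 2^{1+rn^{r-1}}$. Also $S\subseteq\{-1,0,1\}^{n^r}\subseteq[-1,1]^{n^r}$. Applying \cref{lemma:gwsmall} with $N=n^r$ yields
\[
\E_\Gamma\!\left[\|\Gamma\|_{\square^r}\right]=\GW(S)\lesssim\sqrt{n^r\log|S|}\lesssim\sqrt{n^r\cdot rn^{r-1}}\asymp n^{\frac{2r-1}{2}},
\]
using that $r$ is a fixed constant, which is exactly the claimed bound.

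There is essentially no substantive obstacle here: once one observes that the cut-norm is a maximum over an exponentially-large-but-explicitly-countable collection of $\pm\{0,1\}$ arrays, the union bound over Gaussian tails packaged in \cref{lemma:gwsmall} does all the work, and the arithmetic $\sqrt{n^r\cdot n^{r-1}}=n^{(2r-1)/2}$ matches the target exactly. The only point requiring mild care is the passage from the $[0,1]$-valued definition of the cut-norm to the $\{0,1\}$-valued one, since that is what keeps $S$ finite; this reduction is already recorded in the text following \cref{def:cutnorm} (and in any case a standard $\eps$-net argument over the $u_k$ would recover the same bound).
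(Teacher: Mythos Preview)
Your proof is correct and essentially identical to the paper's own argument: you define the same finite set $S$ of $\pm\{0,1\}$-arrays coming from products of $u_k$'s, bound $|S|\le 2\cdot 2^{rn^{r-1}}$, observe that $\E_\Gamma\|\Gamma\|_{\square^r}=\GW(S)$ since $S$ is closed under negation, and invoke \cref{lemma:gwsmall} with $N=n^r$.
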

\begin{proof}
In the notation of \cref{lemma:gwsmall}, let $S$ correspond to the set of functions $f : n^r \to \{0, 1\}$ representable in the form
\[ f(i_1, i_2, \dots, i_r) = \pm \prod_{k = 1}^r u_k(i_1, \dots, i_{k-1}, i_{k+1}, \dots, i_r) \] for functions $u_k : n^{r-1} \to \{0, 1\}$ and a choice of sign.
It is then clear that $|S| \le 2 \cdot 2^{r \cdot n^{r-1}}.$ Additionally, we clearly have that
\begin{equation*}
\E_\Gamma \left[ \|\Gamma\|_{\square^r} \right] \le \E_\Gamma \left[ \sup_{f \in S} \left| \l \Gamma, f \r \right| \right] = \GW(S) \lesssim \sqrt{n^r \log |S|} \lesssim n^\frac{2r-1}{2}.\qedhere 
\end{equation*}
\end{proof}
\begin{proof}[Proof of \cref{thm:gwupper}]
Combine \cref{eq:done4} with \cref{lemma:cutgw,lemma:cutbyabs}.
\end{proof}

\section*{Acknowledgements}
The authors would like to thank the anonymous referees for careful reading and helpful comments.

\end{document}